\newcommand{\no}{\nonumber}
\newcommand{\e}{\varepsilon}
\newtheorem{theorem}{Theorem}[section]
\newtheorem{lemma}[theorem]{Lemma}
\newtheorem{proposition}[theorem]{Proposition}
\theoremstyle{definition}
\title[A fully nonlinear equation for the flame front]
{A fully nonlinear equation for the flame front in a quasi-steady
combustion model}
\author[C.-M. Brauner, J. Hulshof, L. Lorenzi and G. I. Sivashinsky]{}
\subjclass[2000]{Primary: 35K55. Secondary: 35B25, 35B35, 80A25}
\keywords{Front dynamics, stability, Kuramoto-Sivashinsky equation,
fully nonlinear equations, pseudo-differential operators}
 \email{brauner@math.u-bordeaux1.fr}
 \email{jhulshof@cs.vu.nl}
 \email{luca.lorenzi@unipr.it}
\email{grishas@post.tau.ac.il}
\begin{document}

\maketitle

%% Enter the first author's name and address:
\centerline{\scshape Claude-Michel Brauner}
\medskip
{\footnotesize
 %% please put the address of the first author
 \centerline{Institut de Math\'ematiques de Bordeaux, Universit\'e de Bordeaux}
 \centerline{33405 Talence cedex, France}
 \centerline{and}
 \centerline{Department of Mathematics, Xiamen University}
 \centerline{361005 Xiamen, China}
} %% Do not forget to end the {\footnotesize by the sign }

\medskip

\centerline{\scshape Josephus Hulshof}
\medskip
{\footnotesize
 %% please put the address of the second author
 \centerline{Faculty of Sciences, Mathematics and Computer Sciences Division}
\centerline{VU University Amsterdam}
\centerline{1081 HV Amsterdam, The
Netherlands}
} %

\medskip

\centerline{\scshape Luca Lorenzi}
\medskip
{\footnotesize
 %% please put the address of the second author
 \centerline{Dipartimento di Matematica,
Universit\`a degli Studi di Parma}
   \centerline{Viale G. Usberti 53/A, 43124 Parma, Italy}
} %

\medskip

\centerline{\scshape Gregory I. Sivashinsky}
\medskip
{\footnotesize
 %% please put the address of the second author
 \centerline{School of Mathematical Sciences, Tel Aviv University}
 \centerline{69978 Tel Aviv, Israel}
} %
\medskip
\medskip
\centerline{\dedicatory{\emph{\small This paper is dedicated to Roger Temam on the occasion of his 70th birthday}}}
%\bigskip
%
%%% The name of the associate editor will be entered by an editorial staff
% \centerline{(Communicated by Claude Bardos)}

\date{\today}

\begin{abstract}
We revisit the Near Equidiffusional Flames (NEF) model introduced by Matkowsky
and Sivashinsky in 1979 and consider a simplified, quasi-steady version of it.
This simplification allows, near the planar front, an explicit derivation of the front equation. The latter
is a pseudodifferential fully nonlinear parabolic equation of the fourth-order. First, we study the
(orbital) stability of the null solution. Second, introducing a parameter $\e$,
we rescale both the dependent and independent
variables and prove rigourously the convergence to the solution of
the Kuramoto-Sivashinsky equation as $\e \to 0$.
\end{abstract}

\section{Introduction}
Flames constitute a complex physical system involving fluid
dynamics, multistep chemical kinetics, as well as molecular and
radiative transfer. The laminar flames of low-Lewis-number
premixtures are known to display diffusive-thermal instability
responsible for the formation of a non-steady cellular structure
(see \cite{S83}). However, the cellular instability is quite robust
against these aero-thermo-chemical complexities and may be
successfully captured by a model involving only two equations: the heat
equation for the system's temperature and the diffusion equation for
the deficient reactant's concentration. In suitably chosen units,
the so-called thermal-diffusional model reads, see e.g.,
\cite{BuckLud}:
\begin{align}
\Theta_{t}  &  =\Theta_{xx}+\Theta_{yy}+\Omega(Y,\Theta),\label{I1}\\
Y_{t}  &  =Le^{-1}(Y_{xx}+Y_{yy})-\Omega(Y,\Theta),\label{I2}\\
\Omega &  =\frac{1}{2}Le^{-1}\beta^{2}Y\exp[\beta(\Theta-1)/(\sigma
+(1-\sigma)\Theta)]. \label{I3}%
\end{align}
Here, $\Theta=(T-T_{u})/(T_{ad}-T_{u})$ is the scaled temperature,
where $T_{u}$ and $T_{ad}$ correspond to the temperature of the
unburned gas, and the adiabatic temperature of combustion products,
respectively; $Y=C/C_{u}$ is the scaled concentration of the
deficient reactant with $C_{u}$ being its value in the unburned gas;
$x,y,t$ are the scaled spatiotemporal coordinates referred to
$D_{th}/U$, and $D_{th}/U^{2}$, respectively, where $D_{th}$ is the
thermal diffusivity of the mixture and $U$ is the velocity of the
undisturbed planar flame; $Le$ is the Lewis number (the ratio of
thermal and molecular diffusivities); $\sigma=T_{u}/T_{ad}$;
$\beta=T_{a}(1-\sigma)/T_{ad}$ is the Zeldovich number, assumed to
be large, where $T_{a}$ is the activation temperature; $\Omega$ is
the scaled reaction rate, where the normalizing factor
$\frac{1}{2}Le^{-1}\beta^{2}$ ensures that at $\beta\gg1$ the planar
flame propagates at the velocity close to unity.

Due to the distributed nature of the reaction rate $\Omega$,
Equations \eqref{I1} and \eqref{I2} are still difficult for a
theoretical exploration. One therefore turns to the conventional
high activation energy limit ($\beta\gg 1$) which converts the
reaction rate term into a localized source distributed over a
certain interface $x=\xi(t,y)$, the flame front. Intensity of the
source varies along the front as $\exp\left
(\frac{1}{2}(\Theta_f-1)\right )$ (see \cite{S77}). Here, $\Theta_f$
is the scaled temperature at the curved front, which may differ from
unity $(T=T_{ad})$ by a quantity of the order of $\beta^{-1}$. Due
to the strong temperature dependence of the reaction rate ($\beta\gg
1$), even slight changes of $\Theta_f$ may markedly affect its
intensity, and thereby also local flame speed. The study of flame
propagation is thus reduced to a free-interface problem. To ensure
that the emerging free-interface model does not involve large
parameters one should combine the limit of large activation energy
($\beta\gg 1$) with the requirement that the product
$\alpha=\frac{1}{2}\beta(1-Le)$ remains finite, i.e., the ratio of
thermal and molecular diffusivities ($Le$) should be closed to unity.
This is the Near Equidiffusive Flames model, in short NEF, introduced
in \cite{mat-siva}. As a result, instead of the reaction diffusion problem
for $\Theta$ and $Y$, one ends up with a free-interface problem for the new scaled
temperature $\theta=\lim_{\beta\to +\infty}\Theta$ and the reduced
enthalpy $S=\lim_{\beta\to +\infty}\beta^{-1}(\Theta+Y-1)$. More
precisely, the system for the temperature $\theta$, the enthalpy
$S$ and the moving flame front, defined by $x = \xi (t,y)$, reads
\begin{align}
\label{eqn:FBP1} &\frac{\partial\theta}{\partial t}
= \Delta \theta, \;\;x < \xi (t,y),\\[1mm]
\label{eqn:FBP2}
&\theta = 1,\;\; x \geq \xi (t,y),\\
\label{eqn:FBP3} &\frac{\partial S}{\partial t} = \Delta S  -
\alpha\Delta\theta, \;\; x \neq \xi (t,y).
\end{align}
For some mathematical results about this problem, see
\cite{BL,lorenzi-1,lorenzi-2,lorenzi-3,lorenzi-4,DucrotMarion}. Here, we consider
only the case when $\alpha$ is positive, i.e., $Le<1$. It will be
convenient to assume periodicity in $y$ with period $\ell$, and
restrict attention to $y \in [-\ell/2, \ell/2]$. At the front,
$\theta$ and $S$ are continuous, the following jump conditions occur
for the normal derivatives:
\begin{align}
\label{eqn:FBP4}
&\bigg[\frac{\partial\theta}{\partial n}\bigg] = - \exp (S),\\[1mm]
\label{eqn:FBP5} &\bigg[\frac{\partial S}{\partial n}\bigg] = \alpha
\bigg[\frac{\partial\theta}{\partial n}\bigg].
\end{align}
System (\ref{eqn:FBP1})-(\ref{eqn:FBP5}) admits a planar
travelling wave (TW)
solution, with velocity $- 1$,
\begin{eqnarray*}
\overline{\theta}(x) = \left\{
\begin{array}{ll}
\exp x, &x\le 0,\\
1, & x\ge 0
\end{array}
\right. \qquad \overline{S}(x)= \left\{
\begin{array}{ll}
\alpha x \exp x, & x \leq 0,\\
0, & x \geq 0.
\end{array}
\right.
\end{eqnarray*}
As usual one fixes the free boundary. We set $\xi(t,y)=-t +
\varphi(t,y)$, $x'=x-\xi(t,y)$. In this new framework:
\begin{align}
\label{temp-1}
&\theta_t +(1-\varphi_t) \theta_{x'} = \Delta_\varphi \theta,\qquad\;\, x'<0,\\[1mm]
\label{temp-2}
&\theta(x') =1,\qquad\;\, x'>0,\\[1mm]
\label{enth} &S_t +(1-\varphi_t) S_{x'} = \Delta_\varphi S - \alpha
\Delta_\varphi \theta,\qquad\;\,x'\neq 0,
\end{align}
where
\begin{eqnarray*}
\Delta_\varphi = (1 + (\varphi_y)^2) D_{x'x'} + D_{yy}
-\varphi_{yy}D_{x'} -2\varphi_y D_{x'y}.
\end{eqnarray*}
The front is now fixed at $x'=0$.  The first condition
(\ref{eqn:FBP4}) reads:
\begin{eqnarray*}
\sqrt{1+(\varphi_y)^2}\,\,\bigg[\frac{\partial\theta}{\partial
x'}\bigg] = - \exp (S),
\end{eqnarray*}
the second one (\ref{eqn:FBP5}) becomes
\begin{eqnarray*}
\bigg[\frac{\partial S}{\partial x'}\bigg] = \alpha
\bigg[\frac{\partial\theta}{\partial x'}\bigg].
\end{eqnarray*}

A very challenging problem is the derivation of a single equation
for the interface or moving front $\varphi$, which may capture most
of the dynamics and, as a consequence, yields a reduction of the
effective dimensionality of the system. In this spirit, one of the
authors in \cite{siva} derived asymptotically from the System
\eqref{eqn:FBP1}-\eqref{eqn:FBP5} the Kuramoto-Sivashinsky (K--S)
equation in rescaled dependent and independent variables
\begin{equation}
\label{eqn:KS} \Phi_{\tau} + 4\Phi_{\eta\eta\eta\eta} +
\Phi_{\eta\eta} + \frac{1}{2}({\Phi_{\eta})}^2 = 0.
\end{equation}
Since then, this equation has received considerable attention from the mathematical
community. We refer to the book \cite{temam} and its extensive bibliography.

This paper is devoted to a quasi-steady version of
the NEF model. As a matter of fact, it has been observed in similar
problems (see \cite{BFHS}) that not far from the instability threshold the
time derivatives in the temperature and enthalpy equations have a
relatively small effect on the solution. The dynamics appears to be
essentially driven by the front. Based on this observation one can
define a \textit{quasi-steady} NEF model replacing
(\ref{temp-1})-(\ref{enth}) by
\begin{align*}
%\label{tempQS}
&(1-\varphi_t) \theta = \Delta_\varphi \theta,\qquad\;\, x'<0,\\[1.5mm]
&\theta =1,\qquad x'>0,\\[1.5mm]
&(1-\varphi_t) S_{x'} = \Delta_\varphi S - \alpha \Delta_\varphi
\theta,\qquad\, x'\neq 0.
\end{align*}
Next we consider the perturbations of temperature $u$ and enthalpy
$v$:
\begin{eqnarray*}
\theta = \overline{\theta}+u, \quad S = \overline{S}+ v.
\end{eqnarray*}
Writing for simplicity $x$ instead of $x'$, the problem for the
triplet $(u,v,\varphi)$ reads:
\begin{align*}
&(1-\varphi_t)u_x -\Delta_\varphi u - \varphi_t \overline{\theta}_x
= (\Delta_\varphi
- \Delta)\overline{\theta},\qquad x<0,\\[1mm]
&u=0,\qquad x>0,\\[1mm]
&(1-\varphi_t)v_x -\Delta_\varphi (v-\alpha u) - \varphi_t
\overline{S}_x = (\Delta_\varphi - \Delta)(\overline{S} -\alpha
\overline{\theta}),\quad x \neq 0,
\end{align*}
where
\begin{align*}
&(\Delta_\varphi - \Delta)(\overline{\theta}) = ((\varphi_y)^2 -
\varphi_{yy})\overline{\theta}_x,\\[2mm]
&(\Delta_\varphi - \Delta)(\overline{S} -\alpha \overline{\theta}) =
\alpha ((\varphi_y)^2 \overline{S}_x - \varphi_{yy}\overline{S}).
\end{align*}
As in \cite{BHL08,BHL09}, we introduce further simplifications: we
keep only linear and second-order terms for the perturbation of the
front $\varphi$, and first-order terms for the perturbations of
temperature $u$ and enthalpy $v$. This leads to the
equations:
\begin{align*}
&u_x -\Delta u - \varphi_t \overline{\theta}_x = (\Delta_\varphi -
\Delta)\overline{\theta}, \qquad x<0,\\[2mm]
&v_x -\Delta(v-\alpha u) - \varphi_t \overline{S}_x =
(\Delta_\varphi - \Delta)(\overline{S} -\alpha \overline{\theta}),
\quad x\neq0.
\end{align*}
At $x=0$ there are several conditions. First
\begin{eqnarray*}
[u]=[v]=0,
\end{eqnarray*}
however, since $u(x)=0$ for $x>0$, this is equivalent to
\begin{eqnarray*}
u(0^-)=[v]=0.
\end{eqnarray*}
Second,
\begin{eqnarray*}
\sqrt{1 + (\varphi_y)^2}[\overline{\theta}_x +u_x] =
-\exp(\overline{S}+v),
\end{eqnarray*}
hence up to the second-order:
\begin{eqnarray*}
-1 + [u_x]=-(1 +(\varphi_y)^2)^{-\frac{1}{2}}\,e^v \sim \left(1-
\frac{1}{2}(\varphi_y)^2\right)\left(1+v(0) + \frac{1}{2}
(v(0))^2\right)
\end{eqnarray*}
and keeping only the first-order for $v$ yields:
\begin{align*}
&-u_x(0) +v(0)= \frac{1}{2}(\varphi_y)^2,\\
&[v_x]=-\alpha u_x(0).
\end{align*}
Therefore, the final system reads:
\begin{equation}
\left\{
\begin{array}{ll}
u_x -\Delta u - \varphi_t \overline{\theta}_x =((\varphi_y)^2 -
\varphi_{yy})\overline{\theta}_x, & x<0,\\[1mm]
v_x -\Delta (v-\alpha u) - \varphi_t \overline{S}_x = (\varphi_y)^2
\overline{S}_x -
\varphi_{yy}\overline{S}, & x \neq 0,\\[1mm]
u(0)=[v]=0,\\[1mm]
v(0)-u_x(0) = \frac{1}{2}(\varphi_y)^2,\\[1mm]
{\rm [}v_x{\rm ]}=-\alpha u_x(0).
\end{array}
\right. \label{final-u}
\end{equation}
We remark that the equation for $u$ associated with the boundary
condition $u(0)=0$ entirely determines $u$ when $\varphi$ is given.
Therefore, it can be viewed as a kind of {\it pseudo-differential
Stefan condition}. We will take advantage of this remark in Section
\ref{sect-3}.

The goal of this paper is to show that this simplified NEF model
still contains the dynamics of the system. It is simple enough to be integrated
explicitly via a discrete Fourier transform in the
variable $y$ and therefore it allows a separation of the dependent variables.
We get to a self-consistent pseudo-differential
equation for the front $\varphi$ which reads:
\begin{align}
&(X^2_k+\alpha X_k-\alpha)\widehat\varphi_t(t,k)
=(-4\lambda_k^2+(\alpha-1)\lambda_k)\widehat\varphi(t,k)\notag\\
&+\frac{1}{4}(X^3_k-3X_k^2-4\alpha
X_k+4\alpha)\widehat{(\varphi_y)^2}(t,k), \quad k=0,1,\ldots,
\label{psd-front}
\end{align}
where the $-\lambda_k$'s are the non-positive eigenvalues of the
operator $D_{yy}$ with periodic boundary conditions at $y=\pm
\ell/2$ (that we denote below by $A$) and
\begin{eqnarray*}
X_k=\sqrt{1+4\lambda_k},\qquad\;\,k=0,1,\ldots,
\end{eqnarray*}
is the symbol of operator $\sqrt{1-4D_{yy}}$.

Equation \eqref{psd-front} can be written in the more abstract form:
\begin{equation}
\label{psd-abstract-0} \frac{\partial}{\partial t}\varphi={\mathscr
L}(\varphi)+{\mathscr G}((\varphi_y)^2),
\end{equation}
where ${\mathscr L}$ is a pseudodifferential operator whose leading
part is $D_{yy}$ and $G$ is a nonlinear operator whose leading term
is $\frac{1}{4}\sqrt{1-4D_{yy}}$. This makes
$\eqref{psd-abstract-0}$ a strongly nonlinear equation, more
precisely it is a {\it fully nonlinear parabolic equation}: in the
$L^2$-setting the nonlinear part is exactly of the same order as the
linear operator. This is one of the main issues of this
paper.
Note that the
realization of the operator $\sqrt{1-4D_{yy}}$ in the space of
continuous and $\ell$-periodic functions (say $C_{\sharp}$) is
defined only in a {\it proper} subspace of $C^1_{\sharp}$ (the space
of all the $\ell$-periodic $C^1$-functions). Hence, in the
$C_{\sharp}$-setting, the nonlinear term ${\mathscr G}((\varphi_y)^2)$
represents the leading part of the right-hand side of
\eqref{psd-abstract-0}. This would make the study of
\eqref{psd-abstract-0} more difficult than in the $L^2$-setting,
where we confine our analysis.

In the case when $\varphi$ is smoother, we can rewrite Equation
\eqref{psd-abstract-0} as a fourth-order equation as follows:
\begin{eqnarray}\label{psd-abstract}
\frac{\partial}{\partial t}{\mathscr B}\varphi = {\mathscr S}(\varphi)
+ {\mathscr F}((\varphi_y)^2),
\end{eqnarray}
where ${\mathscr S}$ is nothing but the usual fourth-order
differential operator
\begin{eqnarray*}
{\mathscr S}(\varphi) = -\varphi_{yyyy} -(\alpha-1)\varphi_{yy}.
\end{eqnarray*}
Operators ${\mathscr B}$ and ${\mathscr F}$ are pseudo-differential
ones with symbols, respectively,
\begin{eqnarray*}
b_k = X^2_k+\alpha X_k-\alpha, \quad f_k=
\frac{1}{4}(X^3_k-3X_k^2-4\alpha X_k+4\alpha).
\end{eqnarray*}
Therefore,
\begin{align*}
{\mathscr B}&=I-4D_{yy}+\alpha\left (\sqrt{I-4D_{yy}}-I\right ),\\
{\mathscr
F}&=\frac{1}{4}(I-4D_{yy})^{\frac{3}{2}}-\frac{3}{4}(I-4D_{yy})-\alpha\left
(\sqrt{1-4D_{yy}}-I\right ).
\end{align*}
The main feature of Equation \eqref{psd-abstract} is that the
nonlinear part is rather unusual. Actually, it has a fourth-order
leading term, as ${\mathscr S}$ has. Therefore  \eqref{psd-abstract}
is also a fully nonlinear problem, in contrast to \eqref{eqn:KS}
which is semilinear.

The paper is organized as follows: In Section \ref{sect-3} we derive
the front equation via an explicit computation of \eqref{final-u} in
the strip ${\mathbb R} \times [-\ell/2,\ell/2]$. Then, in Section
\ref{sect-4}, we prove the following result
\begin{theorem}
\label{thm:1} Let
\begin{equation}
\alpha_c=1+\frac{16\pi^2}{\ell^2}. \label{eq-c}
\end{equation}
Then, the following properties are met.
\begin{enumerate}[\rm (a)]
\item
If $\alpha<\alpha_c$, then, the null solution to Equation
\eqref{psd-abstract} is $($orbitally$)$ stable, with asymptotic
phase, with respect to sufficiently smooth and small perturbations.
\item
If $\alpha>\alpha_c$, then the null solution to Equation
\eqref{psd-abstract} is unstable.
\end{enumerate}
\end{theorem}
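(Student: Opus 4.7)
The plan is to apply the principle of linearized stability together with the center/unstable manifold theorems for fully nonlinear parabolic equations. Since $\mathscr{F}((\varphi_y)^2)$ vanishes to second order at $\varphi=0$, the linearization of \eqref{psd-abstract} is simply $\mathscr{B}\varphi_t=\mathscr{S}\varphi$, and because $\mathscr{B}$ and $\mathscr{S}$ are Fourier multipliers with respect to the eigenbasis of $A=D_{yy}$, I would diagonalize in discrete Fourier series on $[-\ell/2,\ell/2]$. From \eqref{psd-front} one reads off the spectrum of $\mathscr{B}^{-1}\mathscr{S}$ as
\begin{equation*}
\mu_k=\frac{\lambda_k(\alpha-1-4\lambda_k)}{X_k^2+\alpha X_k-\alpha},\qquad k=0,1,\ldots
\end{equation*}
Since $X_k\geq 1$ and $\alpha>0$, the denominator is bounded below by $1$, so the $\mu_k$ are well defined and behave like $-\lambda_k$ as $k\to\infty$.

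Reading off the signs: $\mu_0=0$, a simple eigenvalue forced by the invariance $\varphi\mapsto\varphi+\mathrm{const}$ (normal translation of the front). For $k\geq 1$ the sign of $\mu_k$ equals that of $\alpha-1-4\lambda_k$, and since the smallest nonzero eigenvalue of $-A$ is $\lambda_1=4\pi^2/\ell^2$, the threshold is $\alpha_c=1+4\lambda_1=1+16\pi^2/\ell^2$, consistent with \eqref{eq-c}. For $\alpha<\alpha_c$ every $\mu_k$ with $k\geq 1$ is strictly negative, and because $\mu_k\to-\infty$ one has $\sup_{k\geq 1}\mu_k<0$; for $\alpha>\alpha_c$ at least $\mu_1>0$.

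For part (a) I would fix a periodic Sobolev space $H^s_\sharp$ with $s$ large enough that the quadratic map $\varphi\mapsto(\varphi_y)^2$ and its composition with $\mathscr{B}^{-1}\mathscr{F}$ act smoothly into $H^s_\sharp$, and verify that $\mathscr{B}^{-1}\mathscr{S}$ is sectorial on this space with the spectrum just computed. Since $0$ is a simple isolated eigenvalue whose eigenfunction is the constant mode and the remainder of the spectrum lies in a half-plane $\{\mathrm{Re}\,\mu\leq-\delta\}$, I would invoke the center manifold theorem for fully nonlinear parabolic equations in the Da Prato--Lunardi framework. By translation invariance the one-dimensional center manifold coincides with the line of constant functions, and the reduced flow on it is identically zero; orbital stability of the null solution with asymptotic phase follows from the standard reduction. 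For part (b), the positive eigenvalue $\mu_1$ furnishes an unstable direction, and instability is deduced from the unstable manifold / linearized instability theorem in the same framework.

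The main obstacle is setting up the functional-analytic framework adapted to the fully nonlinear nature of \eqref{psd-abstract}: one must choose $s$ so that the nonlinearity is of class $C^k$ as a map into $H^s_\sharp$, and verify that the sectorial and maximal-regularity estimates for $\mathscr{B}^{-1}\mathscr{S}$ on $H^s_\sharp$ are strong enough to feed the invariant manifold machinery in the regime where the nonlinear part has the same order as the principal linear part. Once these abstract hypotheses are in place, the spectral computation above combined with translation invariance makes the stability/instability dichotomy essentially automatic.
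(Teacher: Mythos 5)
Your proposal is correct in substance and follows the same overall strategy as the paper: diagonalize the linearization in the Fourier basis of $D_{yy}$, read off the eigenvalues (your $\mu_k$ coincides with the paper's $l_k=\frac{(1-X_k^2)(X_k^2-\alpha)}{4(X_k^2+\alpha X_k-\alpha)}$), locate the threshold $\alpha_c=1+4\lambda_1$, and separate the simple zero eigenvalue forced by translation invariance from the rest of the spectrum. The differences are in the packaging. The paper works in $L^2$ with $D(L)=H^2_{\sharp}$ rather than in a high-order $H^s_{\sharp}$: sectoriality comes from the splitting $l_k=-\lambda_k+l_{1,k}$ with $l_{1,k}\sim\frac{\alpha}{2}\sqrt{\lambda_k}$, so that $L=D_{yy}+L_1$ with $L_1$ a lower-order perturbation, and the quadratic nonlinearity maps $D(L)$ into $L^2$ with the H\"older estimate \eqref{holder} needed for Lunardi's fully nonlinear local existence theorem. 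Moreover, the paper does not invoke the center manifold theorem: since $\Pi G(\psi)=-\frac{1}{2}\Pi\psi$ exactly and the $(I-\Pi)$-projected equation is self-consistent, the decomposition $\varphi=p(t)w_0+\psi$ decouples by hand; $\psi$ decays exponentially by the linearized stability principle and $p(t)\to p_\infty$ is an explicit convergent integral, which is the asymptotic phase. This elementary decoupling is precisely what lets one sidestep the invariant-manifold hypotheses in the fully nonlinear regime — the obstacle you yourself flag. One point your outline omits: Theorem \ref{thm:1} concerns the fourth-order form \eqref{psd-abstract}, while the natural well-posedness theory lives at the level of \eqref{psd-abstract-0}; the paper's Step~2 upgrades the solution to $H^4_{\sharp}$ (via the differentiated problem for $\psi_{yy}$ and the antiderivative operator ${\mathscr P}$) so that ${\mathscr B}$ may be applied and $\varphi$ genuinely solves \eqref{psd-abstract}. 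You should either add such a regularity step or make explicit that your choice of large $s$ places the solution in the domains of ${\mathscr B}$, ${\mathscr S}$ and ${\mathscr F}$.
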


An important question, that we address in Section \ref{sect-5}, is
the link between \eqref{psd-abstract} and K--S. Following
\cite{siva}, we introduce a small parameter $\varepsilon>0$, setting
\begin{equation*}
\alpha = 1+ \varepsilon,
\end{equation*}
and define the rescaled dependent and independent variables
accordingly:
\begin{eqnarray*}
t= \tau/\varepsilon^2,\quad y = \eta/\sqrt{\varepsilon}, \quad
\varphi = \varepsilon \psi.
\end{eqnarray*}
We see that $\psi$ solves the equation
\begin{align*}
&\frac{\partial}{\partial\tau}\left\{I-4\e D_{\eta\eta}+(1+\e)\left (\sqrt{I-4\e D_{\eta\eta}}-1\right )\right\}\psi\\[1mm]
=&-4D_{\eta\eta\eta\eta}\psi-D_{\eta\eta}\psi\\[1mm]
&+\frac{1}{4}\left\{(I-4\e D_{\eta\eta})^{\frac{3}{2}}-3(I-4\e
D_{\eta\eta})-4(1+\e)\left (\sqrt{1-4\e D_{\eta\eta}}-I\right )
\right\}(D_{\eta}\psi)^2.
\end{align*}

Then, we anticipate, in the limit $\varepsilon \to 0$, that $\psi
\sim \Phi$, where $\Phi$ solves \eqref{eqn:KS}. More precisely, we
take for $\ell$:
\begin{equation*}
\ell_{\varepsilon}= \ell_0/\sqrt{\varepsilon},
\end{equation*}
which blows up as $\varepsilon \to 0$; hence
$\alpha_c=1+\frac{16\pi^2}{\ell_0^2}\,\varepsilon$. Thus, $\ell_0$
becomes the new bifurcation parameter. We shall assume that
$\ell_0>4\pi$ in order to have $\alpha_c \in (1,1+\varepsilon)$,
i.e., $\alpha>\alpha_c$, otherwise the trivial solution is stable
and the dynamics is trivial.

The second main result of the paper is the following.
\begin{theorem}
\label{thm:2} Let $\Phi_0 \in H^m$ be a periodic function of period
$\ell_0$. Further, let $\Phi$ be the periodic solution of
\eqref{eqn:KS} $($with period $\ell_0)$ on a fixed time interval
$[0,T]$, satisfying the initial condition $\Phi(0,\cdot)=\Phi_0$.
Then, if $m$ is large enough, there exists $\e_0=\e_0(T)\in (0,1)$
such that, for $0<\e\leq\e_{0}$, Problem \eqref{psd-abstract} admits
a unique classical solution $\varphi$ on $[0,T/\varepsilon^2]$,
which is periodic with period $\ell_{0}/\sqrt{\varepsilon}$ with
respect to $y$, and satisfies
\begin{eqnarray*}
\varphi(0,y)=\e\Phi_0(y\sqrt{\varepsilon}),\qquad\;\,|y|\le
\frac{\ell_0}{2\sqrt{\e}}.
\end{eqnarray*}
Moreover, there exists a positive constant $C$, independent of
$\e\in (0,\varepsilon_0]$, such that
\begin{eqnarray*}
|\varphi(t,y)-\varepsilon
\Phi(t\varepsilon^{2},y\sqrt{\varepsilon})|\leq
C\,\varepsilon^{2},\qquad\;\, 0\leq
t\leq\frac{T}{\varepsilon^{2}},\;\,|y|\leq\frac{\ell_{0}}{2\sqrt{\varepsilon}},
\end{eqnarray*}
for any $\varepsilon\in (0,\e_0]$.
\end{theorem}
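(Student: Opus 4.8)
The plan is to treat the Kuramoto--Sivashinsky profile $\Phi$ as an approximate solution of the rescaled equation and to control the difference $w:=\psi-\Phi$ by a parabolic energy estimate, then to combine this a priori bound with local well-posedness of \eqref{psd-abstract} via a continuation argument. Write the rescaled equation compactly as $\partial_\tau(\mathscr{B}_\e\psi)=\mathscr{S}_0\psi+\mathscr{F}_\e((\psi_\eta)^2)$, with $\mathscr{S}_0=-4D_{\eta\eta\eta\eta}-D_{\eta\eta}$ and $\mathscr{B}_\e,\mathscr{F}_\e$ the Fourier multipliers obtained from $\mathscr{B},\mathscr{F}$ by replacing $D_{yy}$ with $\e D_{\eta\eta}$. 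If $\mu_k\ge0$ are the eigenvalues of $-D_{\eta\eta}$ on $\ell_0$-periodic functions and $b_k^\e,f_k^\e$ the corresponding symbols, an elementary computation gives, uniformly in $k$ and in $\e\in(0,1]$,
\begin{align*}
1+4\e\mu_k\le b_k^\e\le 1+C\e\mu_k,\qquad\Big|f_k^\e+\tfrac12\Big|\le C\big(\e\mu_k+(\e\mu_k)^{3/2}\big),
\end{align*}
so that $\mathscr{R}_\e:=\mathscr{B}_\e-I$ is a self-adjoint multiplier of second order and size $O(\e)$, while $\mathscr{Q}_\e:=\mathscr{F}_\e+\tfrac12 I$ is a self-adjoint multiplier equal to an $O(\e)$ operator of second order plus an $O(\e^{3/2})$ operator of third order. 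Since $\Phi$ solves \eqref{eqn:KS}, i.e.\ $\partial_\tau\Phi=\mathscr{S}_0\Phi-\tfrac12(\Phi_\eta)^2$, and the prescribed data force $\psi(0,\cdot)=\Phi_0=\Phi(0,\cdot)$, the function $w$, which is $\ell_0$-periodic in $\eta$ with $w(0,\cdot)=0$, solves
\begin{align*}
\partial_\tau(\mathscr{B}_\e w)=\mathscr{S}_0 w-\Phi_\eta w_\eta-\tfrac12(w_\eta)^2-\mathscr{R}_\e\partial_\tau\Phi+\mathscr{Q}_\e\big((\Phi_\eta+w_\eta)^2\big).
\end{align*}
Here $\mathscr{R}_\e\partial_\tau\Phi$ and $\mathscr{Q}_\e((\Phi_\eta)^2)$ form a forcing of size $O(\e)$ in every Sobolev norm (because $\Phi$ stays bounded in $H^m$ on $[0,T]$); $-\Phi_\eta w_\eta$ is a transport-type perturbation of the linear part; $-\tfrac12(w_\eta)^2$ is the genuine first-order quadratic nonlinearity; and $\mathscr{Q}_\e(2\Phi_\eta w_\eta+(w_\eta)^2)$ is of the same fourth order as $\mathscr{S}_0$, but carries a factor $\e$ or $\e^{3/2}$.

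The core step is the a priori estimate. Testing the $w$-equation against $w$ in $H^s$ ($s$ large) and using that all operators involved are self-adjoint Fourier multipliers, with $\langle\mathscr{S}_0 w,w\rangle_{H^s}\le-c\|w\|_{H^{s+2}}^2+C\|w\|_{H^s}^2$ for an absolute $c>0$, one obtains a differential inequality for $E:=\langle\mathscr{B}_\e w,w\rangle_{H^s}$. The decisive point is that $E$ is comparable to $\|w\|_{H^s}^2+\e\|w\|_{H^{s+1}}^2$, in particular $E\ge\|w\|_{H^s}^2$, whereas the dissipation it controls is of fourth order: a weak norm against a strong, fourth-order dissipation, which is exactly what renders the remaining terms subordinate. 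The transport term is absorbed after one integration by parts; the genuine quadratic term is absorbed once $\|w\|_{H^s}$ is known to be small; the fourth-order terms $\mathscr{Q}_\e(2\Phi_\eta w_\eta+(w_\eta)^2)$ are treated by moving the multiplier onto $w$ (self-adjointness) and distributing the derivatives so that they fall precisely on $\|w\|_{H^{s+2}}^2$, which leaves a factor $\e^{3/2}\|\Phi\|_{H^{s+2}}$ (or $\e^{3/2}\|w\|_{H^s}$, or $\e\delta$) in front and is therefore absorbed for $\e$ small; and the $O(\e)$ forcing contributes at most $\tfrac12 E+C\e^2$ by Cauchy--Schwarz. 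Collecting everything and using $E\ge\|w\|_{H^s}^2$, one gets, as long as $\|w(\tau)\|_{H^s}\le1$,
\begin{align*}
\frac{d}{d\tau}E\le C(T)E+C(T)\e^2,
\end{align*}
with $C(T)$ depending on $\sup_{[0,T]}\|\Phi\|_{H^m}$ and increasing in $T$. Since $E(0)=0$, Gronwall yields $\|w(\tau)\|_{H^s}^2\le E(\tau)\le C(T)\e^2$ on $[0,T]$. These estimates require $m$ large relative to $s$ and $s$ large enough for the Sobolev algebra and commutator estimates — this is the meaning of ``$m$ large enough''.

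To assemble the statement, construct $w$ (equivalently $\psi$, and then $\varphi(t,y)=\e\psi(\e^2 t,\sqrt\e y)$) either from a Fourier--Galerkin truncation, which commutes with $\mathscr{B}_\e,\mathscr{S}_0,\mathscr{Q}_\e$ so that the same energy estimate gives uniform bounds and a limit in $C([0,T];H^s)\cap L^2(0,T;H^{s+2})$, or from local well-posedness of the fully nonlinear parabolic problem \eqref{psd-abstract}; parabolic bootstrapping upgrades $w$ to a classical solution on $[0,T/\e^2]$ when $m$ is large, and uniqueness follows from the analogous estimate for the difference of two solutions, both obeying the $O(\e)$ bound. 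A continuation/bootstrap argument closes the loop: on the maximal subinterval of $[0,T]$ where $\|w\|_{H^s}\le1$ the estimate above forces $\|w\|_{H^s}\le C_1(T)\e<1$ provided $\e\le\e_0(T):=\min\{1,1/(2C_1(T))\}$ (taken small enough that the $\mathscr{Q}_\e$-absorptions are legitimate), so the bound propagates to all of $[0,T]$ in $\tau$. Finally, undoing the scaling gives $\varphi(t,y)-\e\Phi(\e^2 t,\sqrt\e y)=\e\,w(\e^2 t,\sqrt\e y)$, whence by Sobolev embedding $|\varphi(t,y)-\e\Phi(\e^2 t,\sqrt\e y)|\le\e\|w(\e^2 t,\cdot)\|_{L^\infty}\le C\e\|w\|_{H^s}\le C\e^2$ uniformly on the stated set — the asserted estimate.

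The main obstacle is precisely the family $\mathscr{Q}_\e((\Phi_\eta+w_\eta)^2)$: because $\mathscr{F}$, hence $\mathscr{Q}_\e$, is of third order, these terms are of the same fourth order as the parabolic operator $\mathscr{S}_0$ — this is the fully nonlinear feature of \eqref{psd-abstract} — and are made subordinate only by their $\e$- (resp.\ $\e^{3/2}$-) smallness; consequently closing the energy estimate is a quantitative matter of balancing powers of $\e$ against the derivative loss, which is why one must exploit the self-adjoint multiplier structure to redistribute derivatives and why the analysis is confined to the $L^2$-setting. A secondary point needing care is that $\mathscr{B}_\e^{-1}$ and the effective order of the linearized equation degenerate as $\e\mu_k$ ranges over $(0,\infty)$; this is harmless thanks to the monotone lower bound $b_k^\e\ge1+4\e\mu_k$, which keeps $E$ coercive and the dissipation genuinely of fourth order with $\e$-independent constants.
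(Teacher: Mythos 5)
Your proposal is correct and follows essentially the same route as the paper: the same rescaling and decomposition around the K--S profile, the same key observation that the symbol of $\mathscr F_\e$ equals $-\tfrac12$ plus multipliers of order $O(\e\lambda_k)$ and $O(\e^{3/2}\lambda_k^{3/2})$ so that the fourth-order nonlinear terms can be absorbed into the $\|w\|_{H^{s+2}}^2$ dissipation for small $\e$, the same coercive energy $\langle \mathscr B_\e w,w\rangle\asymp \|w\|_{H^s}^2+\e\|w\|_{H^{s+1}}^2$, and the same Galerkin existence, Gronwall/continuation, and Sobolev-embedding endgame. The only differences are presentational: you estimate the unrescaled remainder $w=\e\rho$ directly in the $H^s$ inner product, whereas the paper differentiates the equation once (working with $\zeta=\rho_\eta$, testing against $(-1)^nD^{2n}\zeta$ in $L^2$) and treats the zero Fourier mode separately by projection.
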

In other words, starting from the same configuration, the solution
of \eqref{psd-abstract} remains on a fixed time interval close to
the solution of K--S up to some renormalization, uniformly in
$\varepsilon$ sufficiently small. Note that the initial condition
for $\varphi$ is of special type, compatible with $\Phi_{0}$\ and
\eqref{eqn:KS} at $\tau=0$. Initial conditions of this type have
been already considered in \cite{BFHLS,BFHS,BHL09}.

Although energy methods are known to be usually inefficient in fully
nonlinear problems, here we may take advantage of the special
structure of $\mathscr F$. It allows us to establish sharp a priori
estimates on the remainder (more precisely on its derivative) when $\e$
is small enough. A key point is an extension of a lemma that we
already successfully used in \cite{BFHLS,BHL09}.

Finally, in the Appendix, for the reader's convenience we provide a
quite detailed proof of the existence, uniqueness and regularity of
the solution to K--S which vanishes at $\tau=0$.

In a forthcoming paper we will incorporate the time derivatives
of the temperature and enthalpy in the model;
the front equation will be more involved
and of higher order in time, as in \cite{BFHLS}.
Another issue we intend to address is the derivation of the front equation
as a solvability condition in the spirit of \cite{BHL08,BHL09}.
\section{Some mathematical setting}\label{math-setting}
\setcounter{equation}{0}
In this section we introduce some notation, the functional spaces and operators we will use below.
We will mainly use the discrete Fourier transform with respect to the variable $y$. For this purpose,
given a function $f:(-\ell/2,\ell/2)\to\mathbb C$, we denote
by $\widehat f(k)$ its $k$-th Fourier coefficient, that is, we write
\begin{eqnarray*}
f(y)=\sum_{k=0}^{+\infty}\widehat f(k)w_k(y),\qquad\;\,y\in (-\ell/2,\ell/2),
\end{eqnarray*}
where $\{w_{k}\}$ is a complete set of (complex valued)
eigenfunctions of the operator
\begin{eqnarray*}
A=D_{yy}:D(A)={H}^{2}(-\ell/2,\ell/2)\,\to\, {L}^{2}(-\ell/2,\ell/2),
\end{eqnarray*}
with $\ell$-periodic boundary conditions,
corresponding to the non-positive eigenvalues
\begin{eqnarray*}
0,-\frac{4\pi^2}{\ell^2},-\frac{4\pi^2}{\ell^2},-\frac{16\pi^2}{\ell^2},-\frac{16\pi^2}{\ell^2},-\frac{36\pi^2}{\ell^2},\dots
\end{eqnarray*}
We shall find it convenient to label this sequence as
\begin{eqnarray*}
0=-\lambda_0(\ell)>-\lambda_1(\ell)=-\lambda_2(\ell)>-\lambda_3(\ell)=-\lambda_4(\ell)>\dots
\end{eqnarray*}
When there is no damage of confusion, we simply write $\lambda_k$
instead of $\lambda_k(\ell)$.

When $f$ depends also on $t$ and/or $x$, by $\widehat f(\cdot,k)$
we denote the $k$-th Fourier coefficient of $f$ with respect to $y$. For instance,
for fixed $t$ and $x$,
$\widehat f(t,x,k)$ will denote the $k$-th Fourier coefficient of the function $f(t,x,\cdot)$.

For integer or arbitrary real $s$, we denote by $H^{s}_{\sharp}$ the usual Sobolev
spaces of order $s$ consisting of $\ell$-periodic (generalized) functions, which we
will conveniently represent as
\begin{eqnarray*}
H_{\sharp}^{s}=\left\{w=\sum_{k=0}^{+\infty}a_{k} w_{k}:\,\sum_{k=0}^{+\infty}\lambda_{k}^{s}
a_{k}^{2}<+\infty\right\},
\end{eqnarray*}
with norm
\begin{eqnarray*}
\|w\|^{2}_{s}=\sum_{k=0}^{+\infty}\lambda_{k}^{s} a_{k}^{2}.
\end{eqnarray*}
For $k=0$, we simply write $L^2$ instead of $H^0_{\sharp}$ and
$|\cdot|_2$ instead of $\|\cdot\|_0$.

We recall that for any $\beta>0$ and $\gamma\in (0,1)$ the operator
$(I-\beta A)^{\gamma}$ has $H^{2\gamma}_{\sharp}$ as a domain and it
is defined by its symbol $((1+\beta\lambda_k)^{\gamma})$ (see e.g.,
\cite[Thm. 4.33]{lunardi-interp}).

Next, for any $n=0,1,\ldots$ and any $\beta\in [0,1)$, we set
\begin{eqnarray*}
C_{\sharp}^{n+\beta}=\{f\in C^{n+\beta}([-\ell/2,\ell/2]): f^{(j)}(-\ell/2)=f^{(j)}(\ell/2),~j\le n\}.
\end{eqnarray*}
$C^{n+\beta}_{\sharp}$ is endowed with the Euclidean norm of
$C^{n+\beta}([-\ell/2,\ell/2])$. Finally, we denote by
$\|\cdot\|_{\infty}$ the sup-norm.

\section{The derivation of a self-consistent equation for the front}
\label{sect-3} \setcounter{equation}{0} The aim of this section is
the derivation of a self-consistent equation (in the Fourier
variables) for the front $\varphi$. For this purpose, we rewrite
Problem \eqref{final-u}, making $\overline\theta$ and $\overline S$
explicit. We get
\begin{equation}
\left\{
\begin{array}{ll}
u_x -\Delta u =(\varphi_t+(\varphi_y)^2 -\varphi_{yy})e^x, & x<0,\\[1mm]
v_x -\Delta (v-\alpha u) = \alpha(\varphi_t+(\varphi_y)^2)(x+1)e^x -
\alpha\varphi_{yy}xe^x, & x< 0,\\[1mm]
v_x -\Delta v = 0, & x > 0,\\[1mm]
u(0)=[v]=0,\\[1mm]
v(0)-u_x(0) = \frac{1}{2}(\varphi_y)^2,\\[1mm]
{\rm [}v_x{\rm ]}=-\alpha u_x(0).
\end{array}
\right.
\label{final-u-1}
\end{equation}
In what follows, we assume that $(u,v,\varphi)$ is a sufficiently
smooth solution to Problem \eqref{final-u-1} such that
the function $x\mapsto e^{-x/2}u(t,x,y)$ is bounded in $(-\infty,0]$ and the function
$x\mapsto e^{-x/2}v(t,x,y)$ is bounded in $\mathbb R$.
As it has been stressed in the Introduction, we use the first equation
in \eqref{final-u-1} and the boundary condition
$u(\cdot,0,\cdot)=0$ as a pseudo-differential Stefan condition.
We solve the problem for $u$ via discrete Fourier transform.
This leads us to the infinitely many equations
\begin{equation}
\widehat u_x(t,x,k)-\widehat u_{xx}(t,x,k)+\lambda_k\widehat u(t,x,k)=\left
(\widehat\varphi_t(t,k)+\widehat{(\varphi_y)^2}(t,k)
+\lambda_k\widehat\varphi(t,k)\right )e^x,
\label{eq-uk}
\end{equation}
for $k=0,1,2,\ldots$, where we recall that $-\lambda_k=-\lambda_k(\ell)$ is the $k$-th eigenvalue of
the realization of the operator $D_{yy}$ in $L^2$. For notational
convenience we set $\nu_k=\frac{1}{2}+\frac{1}{2}\sqrt{1+4\lambda_k}$ for any $k=0,1,\ldots$
A straightforward computation reveals that the solution to \eqref{eq-uk} which
vanishes at $x=0$ and tends to $0$ as $x\to -\infty$ not slower than $e^{-x/2}$ is given by
\begin{align*}
\widehat u(t,x,0)=-\left (\widehat\varphi_t(t,0)+\widehat{(\varphi_y)^2}(t,0)\right )xe^x,\qquad\;\,x\le 0.
\end{align*}

Let us now consider the problem for $v$, where we disregard (for the moment) the condition
$v(\cdot,0,\cdot)-u_x(\cdot,0,\cdot)=\frac{1}{2}(\varphi_y)^2$.
Taking the Fourier transform (with respect to the variable $y$), we get the
Cauchy problems
\begin{eqnarray*}
\left\{
\begin{array}{ll}
\widehat v_x(t,x,0)-\widehat v_{xx}(t,x,0)=\alpha\left (\widehat\varphi_t(t,0)+
\widehat{(\varphi_y)^2}(t,0)\right )(2x+3)e^x,\quad & x<0,\\[1mm]
\widehat v_x(t,x,0)-\widehat v_{xx}(t,x,0)=0, & x>0,\\[1mm]
[\widehat v(t,\cdot,0)]=0,\\[1mm]
[\widehat v_x(t,\cdot,0)]=-\alpha\widehat u_x(t,0,0)=\alpha \left (
\widehat\varphi_t(t,0)+\widehat{(\varphi_y)^2}(t,0)\right ),
\end{array}
\right.
\end{eqnarray*}
for $k=0$, and
\begin{eqnarray*}
\left\{
\begin{array}{ll}
\quad\displaystyle\widehat v_x(t,x,k)-\widehat v_{xx}(t,x,k)+\lambda_k v(t,x,k)\\[3mm]
=\displaystyle\alpha \left (x+2-\frac{1}{\lambda_k}\right )\left (\widehat\varphi_t(t,k)+
\widehat{(\varphi_y)^2}(t,k)\right )e^x+\alpha\lambda_k\left (x+1-\frac{1}{\lambda_k}\right )\widehat\varphi(t,k)e^x\\[4mm]
\quad\displaystyle+\frac{\alpha\nu_k}{\lambda_k}\left (\widehat\varphi_t(t,k)+
\widehat{(\varphi_y)^2}(t,k)+\lambda_k\widehat\varphi(t,k)\right )e^{\nu_kx}, &\hskip -33pt x<0,\\[5mm]
\widehat v_x(t,x,k)-\widehat v_{xx}(t,x,k)+\lambda_k\widehat v(t,x,k)=0, & \hskip -33pt x>0,\\[1mm]
[\widehat v(t,\cdot,k)]=0,\\[1mm]
[\widehat v_x(t,\cdot,k)]=-\alpha\widehat u_x(t,0,k)=\alpha\nu_k^{-1}
\left (\widehat\varphi_t(t,k)+\widehat{(\varphi_y)^2}(t,k)
+\lambda_k\widehat\varphi(t,k)\right ),
\end{array}
\right.
\end{eqnarray*}
for $k\ge 1$.

It is easy to show that
\begin{align*}
&\widehat v(t,x,0)=-\alpha\left (\widehat\varphi_t(t,0)+\widehat{(\varphi_y)^2}(t,0)\right )x(x+1)e^x,\qquad\;\,x<0,\\[1mm]
&\widehat v(t,x,0)=0,\qquad\;\,x>0.
\end{align*}
and
\begin{align*}
\widehat v(t,x,k)=&c_{1,k}e^{\nu_kx}+\frac{\alpha}{\lambda_k}\left
(\widehat\varphi_t(t,k)+\widehat{(\varphi_y)^2}(t,k)\right )(x+2)e^x
+\alpha\widehat\varphi(t,k)(x+1)e^x\\
&+\frac{\alpha}{\lambda_k}\frac{\nu_k}{1-2\nu_k}\left
(\widehat\varphi_t(t,k)+\widehat{(\varphi_y)^2}(t,k)
+\lambda_k\widehat\varphi(t,k)\right )xe^{\nu_kx},\qquad\;\,x<0\\[3mm]
\widehat v(t,x,k)=&c_{2,k}e^{(1-\nu_k)x},\qquad\;\,x\ge 0,
\end{align*}
where
\begin{align*}
c_{1,k}=&\frac{\alpha}{1-2\nu_k}\left (1+\nu_k+\frac{\nu_k}{1-2\nu_k}+\frac{\lambda_k}{\nu_k}\right )
\widehat\varphi(t,k)\\
&+\frac{\alpha}{1-2\nu_k}\left (\frac{1}{\lambda_k}+\frac{2\nu_k}{\lambda_k}+\frac{1}{\nu_k}
+\frac{1}{\lambda_k}\frac{\nu_k}{1-2\nu_k}\right )\left (\widehat\varphi_t(t,k)+\widehat{(\varphi_y)^2}(t,k)\right ),\\[3mm]
c_{2,k}=&\frac{\alpha}{1-2\nu_k}\left (2+\frac{\nu_k}{1-2\nu_k}+\frac{\lambda_k}{\nu_k}-\nu_k\right )
\widehat\varphi(t,k)\\
&-\frac{\alpha}{1-2\nu_k}\left (\frac{2\nu_k}{\lambda_k}-\frac{3}{\lambda_k}-\frac{1}{\lambda_k}\frac{\nu_k}{1-2\nu_k}
-\frac{1}{\nu_k}\right )\left (\widehat\varphi_t(t,k)+\widehat{(\varphi_y)^2}(t,k)\right )
\end{align*}

Now, we are in a position to determine the equation for the front. Indeed, rewriting the boundary condition
\begin{eqnarray*}
v(0)-u_x(0)=\frac{1}{2}(\varphi_y)^2,
\end{eqnarray*}
in Fourier variables, and using the above results, we get to the
following equations for the front (in the Fourier coordinates):
\begin{align*}
&\widehat\varphi_t(t,0)+\frac{1}{2}\widehat{(\varphi_y)^2}(t,0)=0,\\[3mm]
&\left\{\frac{\alpha}{1-2\nu_k}\left (2+\frac{\nu_k}{1-2\nu_k}+\frac{\lambda_k}{\nu_k}-\nu_k\right )+\frac{\lambda_k}{\nu_k}\right\}
\widehat\varphi(t,k)\\[1mm]
&+\left\{-\frac{\alpha}{1-2\nu_k}\left (\frac{2\nu_k}{\lambda_k}-\frac{3}{\lambda_k}-\frac{1}{\lambda_k}\frac{\nu_k}{1-2\nu_k}
-\frac{1}{\nu_k}\right )+\frac{1}{\nu_k}\right\}\widehat\varphi_t(t,k)\\[1mm]
&+\left\{-\frac{\alpha}{1-2\nu_k}\left
(\frac{2\nu_k}{\lambda_k}-\frac{3}{\lambda_k}-\frac{1}{\lambda_k}\frac{\nu_k}{1-2\nu_k}
-\frac{1}{\nu_k}\right
)+\frac{1}{\nu_k}-\frac{1}{2}\right\}\widehat{(\varphi_y)^2}(t,k)=0.
\end{align*}
Let us set $X_k=\sqrt{1+4\lambda_k}$. Then, the equation for $\varphi$ reads (in terms of $X_k$) as follows:
\begin{align*}
&\frac{(X_k-1)(X_k^2-\alpha)}{2X_k^2}\widehat\varphi(t,k)+
\frac{2(X_k^2+\alpha X_k-\alpha)}{X^2_k(X_k+1)}\widehat\varphi_t(t,k)\\
-&\frac{X^3_k-3X_k^2-4\alpha X_k+4\alpha}{2X_k^2(X_k+1)}\widehat{(\varphi_y)^2}(t,k)=0,
\end{align*}
for any $k=0,1,2,\ldots$, or, equivalently,
\begin{equation}
4\widehat\varphi_t(t,k)=
\frac{(1-X_k^2)(X_k^2-\alpha)}{X^2_k+\alpha X_k-\alpha}\widehat\varphi(t,k) +\frac{X^3_k-3X_k^2-4\alpha
X_k+4\alpha}{X^2_k+\alpha X_k-\alpha}\widehat{(\varphi_y)^2}(t,k),
\label{eq-symb-2nd}
\end{equation}
or, even,
\begin{align*}
(X^2_k+\alpha X_k-\alpha)\widehat\varphi_t(t,k)=&\frac{1}{4}(1-X_k^2)(X_k^2-\alpha)\widehat\varphi(t,k)\notag\\
&+\frac{1}{4}(X^3_k-3X_k^2-4\alpha X_k+4\alpha)\widehat{(\varphi_y)^2}(t,k)\notag\\
=&(-4\lambda_k^2+(\alpha-1)\lambda_k)\widehat\varphi(t,k)\notag\\
&+\frac{1}{4}(X^3_k-3X_k^2-4\alpha
X_k+4\alpha)\widehat{(\varphi_y)^2}(t,k),
\end{align*}
for any $k=0,1,\ldots$ Hence, we can
say that $\varphi$ solves the equations
\begin{equation}
\frac{d}{dt}{\mathscr B}(\varphi)={\mathscr S}(\varphi)+{\mathscr
F}((\varphi_y)^2)
\label{eq-4-order}
\end{equation}
and
\begin{equation}
\varphi_t={\mathscr B}^{-1}{\mathscr S}(\varphi)+{\mathscr B}^{-1}{\mathscr F}((\varphi_y)^2):={\mathscr L}(\varphi)
+{\mathscr G}((\varphi_y)^2),
\label{second-order-varphi}
\end{equation}
where the operators ${\mathscr B}$, ${\mathscr S}$ and ${\mathscr F}$ are defined through their
symbols
\begin{align}
b_k&=X^2_k+\alpha X_k-\alpha,
\label{bk}
\\[1mm]
s_k&=-4\lambda_k^2+(\alpha-1)\lambda_k,\label{sk}
\\[1mm]
f_k&=\frac{1}{4}(X^3_k-3X_k^2-4\alpha X_k+4\alpha), \label{f0k}
\end{align}
for any $k=0,1,\ldots$

\section{Stability of the front}
\label{sect-4} \setcounter{equation}{0} In this section we are
interested in studying the stability and instability properties of the null solution
to the Equations \eqref{eq-4-order} and
\eqref{second-order-varphi}. In this respect we need to study the
symbols appearing in \eqref{eq-symb-2nd}.

\subsection{Study of the symbols}\label{symbol}

In this subsection, we study the main properties of the operators
${\mathscr B}$, ${\mathscr G}$, ${\mathscr S}$, and ${\mathscr L}$, ${\mathscr F}$,
whose symbols are respectively defined by \eqref{bk}-\eqref{f0k} and by
\begin{align*}
l_k&=\frac{(1-X_k^2)(X_k^2-\alpha)}{4(X^2_k+\alpha X_k-\alpha)},\\[1mm]
g_k&=\frac{X^3_k-3X_k^2-4\alpha X_k+4\alpha}{4(X^2_k+\alpha
X_k-\alpha)},
\end{align*}
for any $k=0,1,\ldots$ Even if all these operators depend on $\alpha$, we prefer not to stress
explicitly the dependence on $\alpha$ to avoid cumbersome notations.

\begin{proposition}
\label{prop-symb-1}
The following properties are met.
\begin{enumerate}[\rm (i)]
\item
The operator ${\mathscr L}$ admits a realization $L$ in $L^2$ which
is a sectorial operator. Moreover, its spectrum consists of the
sequence $(l_k)$. In particular, $0$ is a simple
eigenvalue of $L$. The spectral projection associated with this
eigenvalue is the operator $\Pi$ defined by
\begin{eqnarray*}
\Pi(\psi)=\frac{1}{\ell}\int_{-\frac{\ell}{2}}^{\frac{\ell}{2}}\psi(y)dy,\qquad\;\,\psi\in L^2.
\end{eqnarray*}
Finally, $\sigma(L)\setminus\{0\}\subset (-\infty,0)$
if and only if $\alpha<\alpha_c$ $($see \eqref{eq-c}$)$.
\item
The operator ${\mathscr B}$ admits a bounded realization $B$ mapping
$H^2_{\sharp}$ into $L^2$. Moreover, $B$ it is invertible.
\item
The operator ${\mathscr F}$ admits a bounded realization $F$ mapping
$H^3_{\sharp}$ into $L^2$.
\item
The operator ${\mathscr G}$ admits a bounded realization $G$ mapping
$H^1_{\sharp}$ into $L^2$.
\item
The realization of the operator ${\mathscr S}$ in $L^2$ is the
operator
\begin{align*}
S=-4D_{yyyy}+(\alpha-1)D_{yy},
\end{align*}
with $H^4_{\sharp}$ as domain.
\end{enumerate}
\end{proposition}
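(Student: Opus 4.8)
The plan is to reduce each of the five assertions to an elementary property of the corresponding scalar symbol sequence, since $\mathscr B,\mathscr S,\mathscr F,\mathscr L,\mathscr G$ are all Fourier multipliers with respect to the orthonormal basis $(w_k)$. Two facts about $X_k=\sqrt{1+4\lambda_k}$ are used repeatedly: $X_k\geq 1$ with $X_k^2-1=4\lambda_k$, and $X_k\sim 2\sqrt{\lambda_k}$ as $k\to+\infty$. The first step, which underlies (i), (ii) and (iv), is that the common denominator never vanishes: since $X_k\geq 1$ and $\alpha>0$,
\[
b_k=X_k^2+\alpha X_k-\alpha\geq 1+\alpha-\alpha=1>0,\qquad k=0,1,\dots,
\]
so $\mathscr B$ is pointwise invertible in Fourier space and the symbols $l_k=s_k/b_k$ and $g_k=f_k/b_k$ are well defined and real.

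For (ii)--(iv) I would only need to track the growth of the symbols. From $X_k\sim 2\sqrt{\lambda_k}$ one obtains $b_k=O(\lambda_k)$, $1/b_k=O(1)$, $\lambda_k/b_k=O(1)$, $f_k=\tfrac14(X_k^3-3X_k^2-4\alpha X_k+4\alpha)=O(\lambda_k^{3/2})$, and hence $g_k=f_k/b_k=O(\lambda_k^{1/2})$; substituting these bounds into the $H^s_\sharp$-norms yields $B\in\mathcal L(H^2_\sharp,L^2)$ with $B^{-1}\in\mathcal L(L^2,H^2_\sharp)$, $F\in\mathcal L(H^3_\sharp,L^2)$ and $G=B^{-1}F\in\mathcal L(H^1_\sharp,L^2)$. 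For (v) I would note that $s_k=-4\lambda_k^2+(\alpha-1)\lambda_k$ is a polynomial in $\lambda_k$, so that $\mathscr S$ is in fact a local differential operator, and then verify by a direct computation on the basis $(w_k)$ — using $D_{yy}w_k=-\lambda_k w_k$, hence $D_{yyyy}w_k=\lambda_k^2 w_k$ — that its $L^2$-realization is the fourth-order differential operator given in (v), with domain $H^4_\sharp$ because $s_k$ grows quadratically in $\lambda_k$.

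The substantive point is (i). Writing $l_k=-\lambda_k(X_k^2-\alpha)/b_k$ and using $(X_k^2-\alpha)/b_k\to 1$, the real sequence $(l_k)$ satisfies $l_k\sim-\lambda_k\to-\infty$, and it is bounded above (at most finitely many terms are positive, and those that are not tend to $-\infty$). Hence the multiplier $L$ on its natural maximal domain in $L^2$ is self-adjoint and bounded above, therefore sectorial, and $\sigma(L)=\{l_k:k\geq 0\}$ is a discrete set accumulating only at $-\infty$. Now $l_0=0$ since $\lambda_0=0$, while for $k\geq 1$ the sign of $l_k$ is that of $\alpha-X_k^2=\alpha-1-4\lambda_k$. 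Since $(\lambda_k)_{k\geq1}$ is nondecreasing with least value $\lambda_1=4\pi^2/\ell^2$, this gives: if $\alpha<\alpha_c=1+16\pi^2/\ell^2$ then $\alpha-1<4\lambda_k$ for every $k\geq1$, so $l_k<0$ there, $0$ is a simple eigenvalue of $L$, and $\sigma(L)\setminus\{0\}\subset(-\infty,0)$; whereas if $\alpha>\alpha_c$ then $l_1>0$, so $\sigma(L)\not\subset(-\infty,0]$. When $0$ is simple, $\ker L$ is the line spanned by the constant $w_0$; since $L$ is self-adjoint, the associated spectral (Riesz) projection is the orthogonal projection onto that line, i.e. the averaging operator $\psi\mapsto\tfrac{1}{\ell}\int_{-\ell/2}^{\ell/2}\psi$, which is exactly $\Pi$.

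The only delicate point — the main obstacle, modest as it is — is the bookkeeping in (i): matching the algebraic condition $X_1^2=\alpha$ with the announced threshold $\alpha_c$, and keeping in mind the double multiplicity of the $\lambda_k$ with $k\geq1$, together with the borderline value $\alpha=\alpha_c$ (where $l_1=l_2=0$), when asserting that $0$ is \emph{simple}. Everything else is a routine estimate of explicit symbols, and the sectoriality of $L$ follows the moment one observes that its symbol is real and bounded above.
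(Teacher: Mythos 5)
Your proposal is correct, and parts (ii)--(v) follow essentially the paper's own route: the symbol asymptotics $b_k\sim 4\lambda_k$, $f_k\sim 2\lambda_k^{3/2}$, $g_k\sim\tfrac12\lambda_k^{1/2}$, together with the non-vanishing of $b_k$ (which you sharpen to the explicit uniform bound $b_k\ge 1$, where the paper only records $b_k\neq 0$). In part (i) you take a genuinely different path. The paper splits $l_k=-\lambda_k+l_{1,k}$ with $l_{1,k}\sim\frac{\alpha}{2}\sqrt{\lambda_k}$, writes $L=A+L_1$ with $L_1$ bounded from $H^1_{\sharp}$ (an intermediate space of class $J_{1/2}$ between $L^2$ and $D(A)$) into $L^2$, and invokes a perturbation theorem for sectorial operators; it then identifies $\Pi$ by showing that $\lambda\mapsto R(\lambda,L)$ has a simple pole at $0$ and computing the residue explicitly. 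You instead observe that $L$ is a Fourier multiplier with real symbol bounded above, hence self-adjoint and bounded above on its maximal domain, which yields sectoriality at once, and you identify the spectral projection as the orthogonal projection onto $\ker L=\mathrm{span}\{w_0\}$, i.e.\ $\Pi$. Both arguments are sound; yours is shorter and more elementary for this diagonal operator, while the paper's perturbation framework is the one that survives when the lower-order part is not diagonal in the basis $(w_k)$. Your remark on the borderline case $\alpha=\alpha_c$ (where $l_1=l_2=0$, so $0$ is no longer a simple eigenvalue) identifies a point the paper glosses over: its claim of simplicity and its resolvent estimate via $l_{\min}=\min_{n\ge1}|l_n|>0$ implicitly require $l_k\neq 0$ for $k\ge 1$, so flagging this is appropriate rather than a defect of your argument.
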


\begin{proof}
(i). To begin with, we observe that
\begin{eqnarray*}
l_k=-\frac{\lambda_k(4\lambda_k+1-\alpha)}{\alpha\sqrt{4\lambda_k+1}+4\lambda_k+1-\alpha}.
\end{eqnarray*}
Hence, we can split
\begin{align*}
l_k&=-\lambda_k+\frac{\alpha\lambda_k\sqrt{1+4\lambda_k}}{\alpha\sqrt{1+4\lambda_k}+4\lambda_k+1-\alpha}
:=-\lambda_k+l_{1,k},
%l_k&=-\lambda_k+\frac{1}{2}\alpha\sqrt{\lambda_k}+\frac{\alpha\sqrt{\lambda_k}((2\sqrt{\lambda_k}-\alpha)\sqrt{1+4\lambda_k}
%-4\lambda_k+\alpha-1)}{2(\alpha\sqrt{1+4\lambda_k}+4\lambda_k+1-\alpha)}\\
%&:=-\lambda_k+\frac{1}{2}\alpha\sqrt{\lambda_k}+l_k',
\end{align*}
for any $k=0,1,\ldots$ Note that $l_{1,k}\sim \frac{\alpha}{2}\sqrt{\lambda_k}$ as $k\to +\infty$.
Hence, from the above splitting of the symbol $(l_k)$ it follows
at once that the operator ${\mathscr L}$ admits a realization $L$ in
$L^2$ with domain $D(L)=H^2_{\sharp}$ which can be split as
$L=A+L_1$, where $L_1$ is a bounded operator from $H^1_{\sharp}$
into $L^2$, and $A$ is the realization of $D_{yy}$ in $L^2$ with
domain $H^2_{\sharp}$. Since $H^1_{\sharp}$ is an intermediate space
of class $J_{1/2}$ between $L^2$ and $D(A)$, \cite[Prop.
2.4.1(i)]{lunardi} applies and shows that $L$ is sectorial.

Let us now compute the spectrum of the operator $L$. For this
purpose, we observe that, since $D(L)$ is compactly embedded into
$L^2$, $\sigma(L)$ consists of eigenvalues only. Further, if
$\lambda$ is an eigenvalue of $L$, then there exists a not
identically vanishing function $\psi$ such that $L\psi=\lambda\psi$.
In the Fourier variables, the previous equation leads to the
infinitely many equations
\begin{eqnarray*}
\lambda\widehat\psi(k)-l_k\widehat\psi(k)=0,\qquad\;\,k=0,1,2,\ldots
\end{eqnarray*}
If $\lambda\neq l_k$, then $\widehat\psi(k)=0$. Hence, if $\lambda$ is
not an element of the sequence $(l_k)$, $\lambda$ is in the
resolvent set of $L$. On the other hand, it is clear that the
sequence $(l_k)$ consists of eigenvalues of $L$. So
$\sigma(L)=\{l_k: k=0,1,\ldots\}$.

Since $l_k\to -\infty$ as $k\to +\infty$, $0$ is an isolated point
of the spectrum of $L$ and the corresponding eigenspace is one-dimensional. Let us prove that $\Pi$ is the spectral
projection associated with such an eigenvalue. For this purpose, we
prove that $0$ is a simple pole of the function $\lambda\mapsto
R(\lambda,L)$ and compute the residual at $0$. Note that for any
$\lambda\not\in\sigma(L)$ and any $\psi\in H^2_{\sharp}$ it holds
that
\begin{eqnarray*}
R(\lambda,L)\psi=\sum_{k=0}^{+\infty}\frac{1}{\lambda-l_k}\widehat \psi(k)w_k.
\end{eqnarray*}
Hence,
\begin{eqnarray*}
\lambda R(\lambda,L)\psi=\widehat \psi(0)w_0+\sum_{k=1}^{+\infty}\frac{\lambda}{\lambda-l_k}\widehat\psi(k)w_k=\Pi\psi
+\sum_{k=1}^{+\infty}\frac{\lambda}{\lambda-l_k}\widehat\psi(k)w_k.
\end{eqnarray*}
Hence, for
$|\lambda|\le\frac{1}{2}\displaystyle\min_{k=1,2,\ldots}|l_k|$, we
can estimate
\begin{eqnarray*}
|\lambda R(\lambda,L)\psi-\Pi\psi|_2^2\le
\sum_{k=1}^{+\infty}\left|\frac{\lambda}{\lambda-l_k}\right |^2|\widehat \psi(k)|^2\le
\frac{2|\lambda|^2}{l_{\min}}\sum_{k=1}^{+\infty}|\widehat\psi(k)|^2
\le \frac{2|\lambda|^2}{l_{\min}}|\psi|_2^2,
\end{eqnarray*}
where $l_{\min}=\min_{n=1,2,\ldots}|l_n|>0$.
This shows that $R(\lambda I-L)$ has a simple pole at $\lambda=0$ and
its residual is the operator $\Pi$, which turns out to be spectral
projection associated with the eigenvalue $0$, which is simple.
For more details, we
refer the reader to e.g., \cite[Prop. A.1.2 \& A.2.1]{lunardi}.

To conclude the proof of point (i), we observe that $l_k<0$, for
$k\ge 1$, if and only if $1+4\lambda_k-\alpha>0$. Since
$(\lambda_k)$ is a nondecreasing sequence, $l_k<0$ for any $k=1,2,\ldots$, if and only if
$4\lambda_1+1-\alpha>0$, i.e., if and only if $\alpha<\alpha_c$.

(ii), (iii) \& (iv). It is enough to observe that
$b_k\sim 4\lambda_k$, $f_k\sim 2\lambda_k^{3/2}$, $g_k\sim \frac{1}{2}\sqrt{\lambda_k}$ as $k\to +\infty$ and
$b_k\neq 0$ for any $k=0,1,\ldots$

(v). It is immediate and, hence, omitted.
\end{proof}

\subsection{Proof of Theorem $\ref{thm:1}$}
The proof is rather classical and is based on the results in
Propositions \ref{prop-symb-1}. Nevertheless, for the reader's
convenience we go into details. We split the proof in two steps: in
the first one we deal with Equation \eqref{second-order-varphi} and
in the second one we consider Equation \eqref{eq-4-order}.

{\em Step 1.} Using classical arguments based on a fixed point
argument, one can show that for any $\alpha\in\mathbb R$ and any
$T>0$, there exists $r_0>0$ such that, if
$\|\varphi_0\|_2\le r_0$, the Cauchy problem
\begin{equation}
\left\{
\begin{array}{lll}
\varphi_t(t,y)=(L\varphi(t,\cdot))(y)+(G((\varphi_y(t,\cdot))^2))(y), & t>0, & |y|\le\frac{\ell}{2},\\[1mm]
\varphi(t,-\ell/2)=\varphi(t,\ell/2), & t>0,\\[1mm]
\varphi_y(t,-\ell/2)=\varphi_y(t,\ell/2), & t>0,\\[1mm]
\varphi(0,y)=\varphi_0(y), &&|y|\le \frac{\ell}{2},
\end{array}
\right.
\label{pb-final}
\end{equation}
admits a unique solution $\psi\in \bigcup_{\theta\in (0,1)}{\mathscr
X}_{\theta}(T)$, where
\begin{eqnarray*}
{\mathscr X}_{\theta}(T)=\left\{\psi\in C([0,T];H^2_{\sharp})\cap C^1([0,T];L^2):
\sup_{0<\e<T}\e^{\theta}[\psi]_{C^{\theta}([\e,T];H^2_{\sharp})}\right\}.
\end{eqnarray*}
This can be proved slightly adapting the proof of \cite[Thm.
8.1.1]{lunardi}. The crucial point is the estimate
\begin{equation}
s^{\theta}|G((\psi_y(t,\cdot))^2)-G((\psi_y(s,\cdot))^2)|_2 \le
C_1s^{\theta}|\psi_y(s,\cdot)|_2[\psi]_{C^{\theta}([s,T];L^2)}|t-s|^{\theta},
\label{holder}
\end{equation}
for any $0<s<t\le T$, some positive constant $C_1$ and any $\psi\in {\mathscr
X}_{\theta}(T)$ ($\theta\in (0,1)$).
To prove this estimate it suffices to observe that, by Proposition \ref{prop-symb-1}(iv)
\begin{align*}
&|G((\psi_y(t,\cdot))^2)-G((\psi_y(s,\cdot))^2)|_2^2\\
\le &
C|(\psi_y(t,\cdot))^2-(\psi_y(s,\cdot))^2|_2
+C|D_y(\psi_y(t,\cdot))^2-D_y(\psi_y(s,\cdot))^2|_2^2\no\\
\le & C|\psi_y(t,\cdot)-\psi_y(s,\cdot)|_2\|\psi_y(t,\cdot)+\psi_y(s,\cdot)\|_{\infty}\\
&+C|\psi_{yy}(t,\cdot)|_2\|\psi_y(t,\cdot)-\psi_y(s,\cdot)\|_{\infty}\\
&+C\|\psi_y(s,\cdot)\|_{\infty}|\psi_{yy}(t,\cdot)-\psi_{yy}(s,\cdot)|_2\\
\le & C|\psi_y(t,\cdot)-\psi_y(s,\cdot)|_2|\psi_{yy}(t,\cdot)+\psi_{yy}(s,\cdot)|_2\\
&+C\left (|\psi_{yy}(t,\cdot)|_2+C|\psi_y(s,\cdot)|_2\right )|\psi_{yy}(t,\cdot)-\psi_{yy}(s,\cdot)|_2,
\end{align*}
for any $0<s<T$, where the last side of the previous chain of
inequalities follows from Poincar\'e-Wirtinger inequality, and $C$
denotes a positive constant, independent of $s$, $t$ and $\psi$,
which may vary from line to line. Estimate \eqref{holder} now
follows at once.

Let us now prove properties (a) and (b). It is convenient to split
the solution $\varphi$ to Equation \eqref{second-order-varphi} along
$\Pi (L^2)$ and $(I-\Pi)(L^2)$. We get $\varphi(t,y)=p(t)w_0+\psi(t,y)$
for any $t>0$ and any $y\in [-\ell/2,\ell/2]$. Since $\Pi$ commutes
with both the time and the spatial derivatives,
$\Pi(D_t\varphi)=D_t\Pi(\varphi)=p'$ and
$(I-\Pi)(D_y\varphi)=D_y(I-\Pi)(\varphi)=D_yw$. Moreover, for any
$\psi\in H^1_{\sharp}$,
$G(\psi)=\sum_{k=0}^{+\infty}g_k\widehat\psi(k)w_k$, so that
\begin{eqnarray*}
\Pi G(\psi)=g_0\widehat\psi(0)=-\frac{1}{2}\Pi\psi.
\end{eqnarray*}
Hence, projecting the Cauchy problem \eqref{pb-final} along
$\Pi(L^2)$ and $(I-\Pi)(L^2)$, we get the two self-consistent
equations for $p$ and $\psi$:
\begin{equation}
\left\{
\begin{array}{ll}
p'(t)=-\frac{1}{2}\Pi((\psi_y(t,\cdot))^2), & t>0,\\[1mm]
p(0)=\Pi(\varphi_0),
\end{array}
\right.
\label{pb-Pi}
\end{equation}
and
\begin{equation}
\left\{
\begin{array}{lll}
\psi_t(t,y)=(L\psi(t,\cdot))(y)+(I-\Pi)(G((\psi_y(t,\cdot))^2))(y), & t>0, & |y|\le\frac{\ell}{2},\\[1mm]
\psi(t,-\ell/2)=\psi(t,\ell/2), & t>0,\\[1mm]
\psi_y(t,-\ell/2)=\psi_y(t,\ell/2), & t>0,\\[1mm]
\psi(0,y)=((I-\Pi)(\varphi_0))(y), &&|y|\le \frac{\ell}{2}.
\end{array}
\right.
\label{pb-I-Pi}
\end{equation}
Clearly, the stability of the null solution to Equation
\eqref{second-order-varphi} depends only on the stability of the
null solution to the equation $\psi_t=L\psi+(I-\Pi)(G((\psi_y)^2))$, set in
$(I-\Pi)(L^2)$.

Note that the part of the operator $L$ in $(I-\Pi)(L^2)$ is still a
sectorial operator, and its spectrum is
$\sigma(L)\setminus\{0\}=\{l_k: k=1,2,\ldots\}$. In particular, all the elements of $\sigma(L)\setminus\{0\}$ lie
in $(-\infty,0)$. Hence, the
linearized stability principle applies to this situation. More
specifically,  in the case when $\alpha<\alpha_c$ all the
eigenvalues of the part of $L$ in $(I-\Pi)(L^2)$ are contained in
the plane $\{\lambda\in\mathbb C: {\rm Re}\,\lambda<0\}$. Hence, up
to replacing $r_0$ with a smaller value (if needed), for any
$\varphi\in B(0,r)\subset L^2$, the solution $\psi$ to Problem
\eqref{pb-I-Pi} exists for all the positive times. Moreover, for any
$\omega>\max\{l_k: k=1,2,\ldots\}$, there exists a positive constant
$C_{\omega}$ such that
\begin{eqnarray*}
|\psi_t(t,\cdot)|_2+\|\psi(t,\cdot)\|_2
\le C_{\omega}e^{\omega t}\|\varphi_0\|_2,\qquad\;\,t>0.
\end{eqnarray*}
As a byproduct, we can infer that the solution to Problem \eqref{pb-Pi} exists for all the positive times and
\begin{eqnarray*}
\lim_{t\to
+\infty}p(t)=p_{\infty}:=\Pi\varphi_0-\frac{1}{2}\int_0^{+\infty}\Pi((\psi_y(t,\cdot))^2)dt.
\end{eqnarray*}

Coming back to Problem \eqref{pb-final}, the above results show
that, if $\alpha<\alpha_c$, this problem admits a unique solution,
defined for all the positive times. Moreover,
\begin{eqnarray*}
|\varphi_t(t,\cdot)|_2+\|\varphi(t,\cdot)-p_{\infty}\|_{\infty}
+\|\varphi_y(t,\cdot)\|_{\infty} +|\varphi_{yy}(t,\cdot)|_2
\le P_{\omega}e^{\omega t}\|\varphi_0\|_2,
\end{eqnarray*}
for any $t>0$, any $\omega$ as above and some positive constant $P_{\omega}$ independent of $s$, $\varphi_0$ and $\varphi$, i.e., the null solution to Equation
\eqref{second-order-varphi} is (orbitally) stable with asymptotic
phase.

In the case when $\alpha>\alpha_c$ the spectrum of $L_{|(I-\Pi)(L^2)}$ contains
(a finite number of) eigenvalues with positive real part. Hence, the equation
$\psi_t=L\psi+(I-\Pi)(G((\psi_y)^2))$ admits a backward solution, exponentially decreasing to $0$ at $-\infty$
and this implies that the null solution to Problem \eqref{pb-I-Pi} and, consequently, the null solution to
Problem \eqref{pb-final} are unstable. For further details, we refer the reader to
e.g., \cite{henry} and \cite[Thm. 9.1.2 \& 9.1.3]{lunardi}.

{\em Step 2.}
We focus on the case when $\alpha<\alpha_c$, the other case being simpler.
Of course, we just need to deal with the function $\psi=(I-\Pi)\varphi$.
We assume that $\varphi_0\in H^4_{\sharp}$.
We are going to show that
for any $\omega\in (0,\max_{k=1,2,\ldots}l_k)$, it holds that
\begin{eqnarray*}
\sup_{t>0}e^{-\omega t}\|\varphi(t,\cdot)\|_4+
\sup_{t>0}e^{-\omega t}\|\varphi_t(t,\cdot)\|_2<+\infty.
\end{eqnarray*}

For this purpose, let us consider the differentiated problem
\begin{equation}
\left\{
\begin{array}{lll}
\rho_t(t,y)=(L\rho(t,\cdot))(y)+(D_{yy}(I-\Pi)(G(({\mathscr P}(\rho(t,\cdot)))^2))(y), & t>0, & |y|\le\frac{\ell}{2},\\[1mm]
\rho(t,-\ell/2)=\rho(t,\ell/2), & t>0,\\[1mm]
\rho_y(t,-\ell/2)=\rho_y(t,\ell/2), & t>0,\\[1mm]
\rho(0,y)=D_{yy}\varphi_0(y), &&|y|\le \frac{\ell}{2},
\end{array}
\right.
\label{pb-I-Pi-deriv}
\end{equation}
for the unknown $\rho=\psi_{yy}$.
Here,
\begin{equation}
{\mathscr P}(\zeta)=(I-\Pi)\left (y\mapsto \int_{-\frac{\ell}{2}}^y\zeta(\eta)d\eta\right ),\qquad\;\,\zeta\in L^2.
\label{operat-P}
\end{equation}

This problem has the same structure as Problem \eqref{pb-I-Pi}, and, by assumptions,
$D_{yy}\varphi_0\in H^2_{\sharp}$. Therefore, up to taking a smaller
$r_0$ (if necessary), if $\|D_{yy}\varphi_0\|_2\le r_0$, Problem
\eqref{pb-I-Pi-deriv} has a solution $\rho$ which belongs to
$C^1([0,T];L^2)\cap C([0,T];H^2_{\sharp})$ for any $T>0$. Moreover,
\begin{eqnarray*}
\sup_{t>0}e^{-\omega t}\|\rho(t,\cdot)\|_2<+\infty,
\end{eqnarray*}
and $\rho(t,\cdot)=(I-\Pi)\rho(t,\cdot)$ for any $t>0$.

Let us show that $\psi(t,\cdot)={\mathscr P}^2(\rho(t,\cdot))$ for any $t>0$.
Clearly, the function $\Psi={\mathscr P}^2(\rho)$ belongs to $C([0,+\infty);H^4_{\sharp})\cap
C^1([0,+\infty);L^2)$. Moreover, it belongs to ${\mathscr X}_{1/2}(T)$ for any $T>0$.
Indeed, $H^2_{\sharp}$ belongs to the class $J_{1/2}$ between $L^2$ and $H^4_{\sharp}$.
This means that
\begin{align*}
\|\Psi(t,\cdot)-\Psi(s,\cdot)\|_2&\le C|\Psi(t,\cdot)-\Psi(s,\cdot)|_2^{\frac{1}{2}}
\|\Psi(t,\cdot)-\Psi(s,\cdot)\|_4^{\frac{1}{2}}\\
\le& \sqrt{2}C\|\Psi_t\|_{C([0,T];L^2)}^{\frac{1}{2}}\|\Psi\|_{C([0,T];H^4_{\sharp})}^{\frac{1}{2}}
|t-s|^{\frac{1}{2}},
\end{align*}
for any $0\le s\le t\le T$ and some positive constant $C$, independent of $s$, $t$ and $\Psi$.
From this estimate, it is clear that $\Psi\in C^{1/2}([0,T];H^2_{\sharp})\subset {\mathscr X}_{1/2}(T)$
for any $T>0$.

Further, $D_{yy}\Psi=\rho$ and $D_t\Psi={\mathscr P}^2(D_t\rho)$,
so that $D_{yy}D_t\Psi=D_tD_{yy}\Psi=\rho_t$.
It turns out that
\begin{eqnarray*}
(i)~D_{yy}(D_t\Psi-L\Psi-(I-\Pi)G((\Psi_y)^2))\equiv 0,\qquad\;\, (ii)~\Psi(0,\cdot)\equiv (I-\Pi)\varphi_0.
\end{eqnarray*}
Hence, $D_t\Psi-L\Psi-(I-\Pi)G((\Psi_y)^2)=a(t)+b(t)y$ for some functions $a,b:[0,+\infty)\to\mathbb R$.
Since $\Psi_t$, $L\Psi$ and $G((\Psi_y)^2)$ are continuous functions
in $[0,+\infty)\times [-\ell_0/2,\ell_0/2]$ and are periodic with respect to $y$, it follows that
$D_t\Psi-L\Psi-(I-\Pi)G((\Psi_y)^2)$ is periodic with respect to $y$ as well. Moreover,
this latter function belongs to $(I-\Pi)(L^2)$ since $\Psi$ does.
Hence, $a=b\equiv 0$, implying that $\Psi$ and $\psi$ actually coincide. We have so proved that $\psi\in C([0,+\infty);H^4_{\sharp})$ and $D_t\psi\in C([0,+\infty);H^2_{\sharp})$. Moreover,
\begin{eqnarray*}
\sup_{t>0}e^{-\omega t}|\psi_{tyy}(t,\cdot)|_2+\sup_{t>0}e^{-\omega t}\|\psi(t,\cdot)\|_{H^4_{\sharp}}<+\infty.
\end{eqnarray*}

To complete the proof it suffices to show that $\varphi$ solves
Equation \eqref{eq-4-order}, but this follows immediately observing
that $\varphi$ is in the domain of both the operators $B$ (see
Proposition \ref{prop-symb-1}(ii)) and $S$, and $(\varphi_y)^2$ is
in the domain of the operator $F$ (see Proposition
\ref{prop-symb-1}(iii)). Further, $L=B^{-1}S$ and $G=B^{-1}F$ in $H^4_{\sharp}$.
Since $\varphi$ solves the differential
equation $\varphi_t-L\varphi-G((\varphi_y)^2)=0$, applying $B$ to both the sides of the equation,
it now follows immediately that $\varphi$ solves Equation \eqref{eq-4-order}.

\section{Rigorous derivation of the Kuramoto-Sivashinsky equation}
\setcounter{equation}{0} \label{sect-5} In this section we are
interested in proving Theorem \ref{thm:2}.
\subsection{Rescaling and equation for the remainder}
Let $\varphi$ be a solution to \eqref{psd-abstract}. We set $\alpha=1+\e$ and define the rescaled dependent and
independent variables:
\begin{eqnarray*}
t= \tau/\varepsilon^2,\quad y = \eta/\sqrt{\varepsilon}, \quad
\varphi = \varepsilon \psi.
\end{eqnarray*}
The spatial period is now $\ell_{\e} = \ell_0/{\sqrt{\e}}$, for some
$\ell_0> 4\pi$ fixed, see the Introduction. A straightforward
computation reveals that the function $\psi$ satisfies the equation
\begin{equation}
\frac{\partial}{\partial\tau} {\mathscr B}_{\e}(\psi)={\mathscr
S}(\psi)+{\mathscr F}_{\e}((D_{\eta}\psi)^2),
 \label{eq-4th-order-e-bis}
\end{equation}
where
\begin{align*}
&{\mathscr B}_{\e}=I-4\e D_{\eta\eta}+(1+\e)\left (\sqrt{I-4\e
D_{\eta\eta}}-1\right ),\\
&{\mathscr S}=-4D_{\eta\eta\eta\eta}-D_{\eta\eta},\\
&{\mathscr F}_{\e}= \frac{1}{4}\left\{(I-4\e
D_{\eta\eta})^{\frac{3}{2}}-3(I-4\e D_{\eta\eta})-4(1+\e)\left (\sqrt{1-4\e
D_{\eta\eta}}-I\right ) \right\}(D_{\eta}\cdot)^2.
\end{align*}

Note that, if we denote by $(\lambda_k(\ell))$ the sequence of the
eigenvalues of the second-order derivative with periodic boundary
conditions in $[-\ell/2,\ell/2]$, it turns out that
$\lambda_k(\ell_{\e})=\e\lambda_k(\ell_0):=\e\lambda_k$, for any
$k=0,1,\ldots$. Hence, the symbols of the operators ${\mathscr
B}_{\e}$, ${\mathscr S}$ and ${\mathscr F}_{\e}$ are
\begin{align}
&b_{\e,k}=X^2_{\e,k}+(1+\e) X_{\e,k}-1-\e,\notag\\
&s_k=-\lambda_k(4\lambda_k-1)\notag,\\
&f_{\e,k}=\frac{1}{4}(X^3_{\e,k}-3X_{\e,k}^2-4(1+\e) X_{\e,k}+4+4\e),
\label{fk}
\end{align}
for any $k=0,1,\ldots$, where
\begin{eqnarray*}
X_{\varepsilon,k}=\sqrt{1+4\varepsilon\lambda_k},\qquad\;\,k=0,1,\ldots
\end{eqnarray*}
Hence, the equation for the function $\psi$ (in Fourier
coordinates) reads
\begin{align*}
&b_{\e,k}\widehat\psi_{\tau}(\tau,k)=-\lambda_k(4\lambda_k-1)\widehat\psi(\tau,k)+f_{\e,k}\widehat{(\psi_{\eta})^2}(\tau,k),
\end{align*}
for any $k=0,1,\ldots$ Note that the leading terms (at order $0$ in $\e$) of $b_{\e,k}$ and $f_{\e,k}$ are
$1$ and $-1/2$, respectively. Hence, at the zero-order, we recover the K--S
equation
\begin{eqnarray*}
\Phi_{\tau}+4\Phi_{\eta\eta\eta\eta}+\Phi_{\eta\eta}+\frac{1}{2}(\Phi_{\eta})^2=0.
\end{eqnarray*}
As we remind it in the Introduction, this equation has been thoroughly studied by many authors.
For our purposes, we need the following classical result.
For the reader's convenience we provide a rather detailed proof in Appendix \ref{sect-appendix}.
\begin{theorem}
\label{thm-4.1} Let $\Phi_0\in H^m_{\sharp}$ for some $m\ge 4$ and fix $T>0$.
Then, the Cauchy problem
\begin{equation}
\left\{
\begin{array}{lll}
\Phi_\tau(\tau,\eta)= -4\Phi_{\eta\eta\eta\eta}(\tau,\eta)-
\Phi_{\eta\eta}(\tau,\eta)
- \frac{1}{2}(\Phi_\eta(\tau,\eta))^2, &\tau\ge 0, & |\eta|\le\frac{\ell_0}{2},\\[2mm]
D_{\eta}^k\Phi(\tau,-\ell_0/2)=D_{\eta}^k\Phi(\tau,\ell_0/2), &\tau\ge 0, & k=0,1,2,3,\\[2mm]
\Phi(0,\eta)=\Phi_0(\eta), &&|\eta|\le \frac{\ell_0}{2},
\end{array}
\right.
\label{4order}
\end{equation}
admits a unique solution $\Phi\in C([0,T];H^m_{\sharp})$ such that
$\Phi_{\tau}\in C([0,T];H^{m-4}_{\sharp})$.
\end{theorem}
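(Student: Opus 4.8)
The plan is to treat \eqref{4order} as a semilinear parabolic problem for the analytic semigroup generated by $S=-4D_{\eta\eta\eta\eta}-D_{\eta\eta}$, and then to upgrade a short-time solution to one on all of $[0,T]$ (in fact on $[0,\infty)$) by means of a priori estimates. First I would record that the realization of $S$ in $L^2$ with domain $H^4_{\sharp}$ is sectorial: its symbol on $w_k$ is $s_k=-4\lambda_k^2+\lambda_k$, so the leading term $-4\lambda_k^2$ is dissipative while $\lambda_k$ is a relatively bounded perturbation of order two against order four; exactly as in the proof of Proposition \ref{prop-symb-1}(i) this makes $S$ sectorial, and the semigroup satisfies the smoothing estimates $\|e^{\tau S}\zeta\|_{s+r}\le C\tau^{-r/4}e^{c\tau}\|\zeta\|_s$ for $r\ge 0$, $\tau>0$. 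The nonlinearity $N(\Phi)=-\tfrac12(\Phi_{\eta})^2$ maps $H^m_{\sharp}$ into $H^{m-1}_{\sharp}$ and is locally Lipschitz there, since $m-1\ge 3>\tfrac12$ makes $H^{m-1}_{\sharp}$ a Banach algebra and $N(\Phi)-N(\Psi)=-\tfrac12(\Phi_\eta+\Psi_\eta)(\Phi_\eta-\Psi_\eta)$. A contraction argument on the mild formulation
\[
\Phi(\tau)=e^{\tau S}\Phi_0+\int_0^\tau e^{(\tau-\sigma)S}N(\Phi(\sigma))\,d\sigma
\]
in $C([0,\tau_0];H^m_{\sharp})$ — using that $\int_0^{\tau}(\tau-\sigma)^{-1/4}d\sigma=\tfrac43\tau^{3/4}\to0$ as $\tau\to0$ — then yields, for $\tau_0$ small depending only on $\|\Phi_0\|_m$, a unique local solution; applying Gronwall to the difference of two solutions gives uniqueness on any common interval, and one obtains the usual blow-up alternative: the maximal solution either is global or satisfies $\|\Phi(\tau)\|_m\to\infty$ at a finite $\tau^*$ (see, e.g., \cite{henry} and \cite[Thm.~8.1.1]{lunardi}).

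The core of the argument is then an a priori bound excluding finite-time blow-up. The key point is a cancellation obtained after differentiating the equation: the function $v=\Phi_\eta$ (periodic, with zero mean over the period) solves $v_\tau=-4v_{\eta\eta\eta\eta}-v_{\eta\eta}-vv_\eta$, and testing against $v$ annihilates the nonlinearity because $\int v^2v_\eta\,d\eta=\tfrac13\int\partial_\eta(v^3)\,d\eta=0$. Hence
\[
\frac12\frac{d}{d\tau}|v|_2^2=-4|v_{\eta\eta}|_2^2+|v_\eta|_2^2
\le -4|v_{\eta\eta}|_2^2+|v|_2|v_{\eta\eta}|_2\le c\,|v|_2^2,
\]
where I used $|v_\eta|_2^2=-\int vv_{\eta\eta}\le|v|_2|v_{\eta\eta}|_2$ and Young's inequality; Gronwall gives $|\Phi_\eta(\tau)|_2\le e^{c\tau}|\Phi_{0,\eta}|_2$. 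Combined with $\tfrac{d}{d\tau}\overline\Phi=-\tfrac1{2\ell_0}|\Phi_\eta|_2^2$ and the Poincar\'e--Wirtinger inequality, this controls $\Phi(\tau)$ in $H^1$ on $[0,T]$. From here I would bootstrap up to $H^m_{\sharp}$ in either of two standard ways: (i) run the same energy computation on $\partial_\eta^jv$ for $j=1,\dots,m-1$, estimating the nonlinear contributions $\int\partial_\eta^j(vv_\eta)\,\partial_\eta^jv$ by Gagliardo--Nirenberg interpolation and absorbing the top-order part into $-4|\partial_\eta^{j+2}v|_2^2$ once the lower-order norms are under control; or (ii) insert the $H^1$ bound into the Duhamel formula and iterate the smoothing estimate $\|e^{\tau S}\zeta\|_m\le C\tau^{-(m-1)/4}e^{c\tau}\|\zeta\|_1$, whose singularity is integrable. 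Either way $\sup_{[0,T]}\|\Phi(\tau)\|_m<+\infty$, which contradicts blow-up at any $\tau^*\le T$, so the local solution extends to $[0,T]$ (indeed to $[0,\infty)$).

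Finally, the asserted time regularity is free: once $\Phi\in C([0,T];H^m_{\sharp})$ is known, the equation $\Phi_\tau=S\Phi+N(\Phi)$ shows $\Phi_\tau\in C([0,T];H^{m-4}_{\sharp})$, since $S\Phi\in C([0,T];H^{m-4}_{\sharp})$ and $N(\Phi)\in C([0,T];H^{m-1}_{\sharp})$. I expect the only genuinely delicate step to be the higher-order a priori estimates in the bootstrap — bookkeeping which Gagliardo--Nirenberg exponents let the top-order nonlinear term be swallowed by the fourth-order dissipation — whereas the local theory and the first $L^2$ estimate on $\Phi_\eta$ are routine.
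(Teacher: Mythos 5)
Your proposal is correct and its core coincides with the paper's: both rest on the semigroup theory for the sectorial operator $S=-4D_{\eta\eta\eta\eta}-D_{\eta\eta}$ together with the Tadmor-type cancellation $\int v^2v_\eta\,d\eta=0$ for $v=\Phi_\eta$, which yields the exponential $L^2$ bound on $\Phi_\eta$, and both then recover the mean of $\Phi$ from the ODE $\frac{d}{d\tau}\Pi\Phi=-\frac12\Pi((\Phi_\eta)^2)$ plus Poincar\'e--Wirtinger. You differ from the paper in two places, both legitimately. First, you run the fixed point directly in $C([0,\tau_0];H^m_{\sharp})$, whereas the paper works at the base level ($D(S)=H^4_{\sharp}$ in $L^2$) and then propagates the extra regularity by differentiating the Duhamel formula in $\eta$ and invoking optimal-regularity results; your route is more self-contained, the paper's makes the statement ``$\Phi_\tau\in C([0,T];H^{m-4}_{\sharp})$'' come out of the linear theory rather than by reading it off the equation. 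Second, to pass from the $H^1$ a priori bound to global existence you propose an explicit bootstrap (higher-order energy estimates or iterated smoothing), while the paper short-circuits this by citing an abstract non-blowup criterion (\cite[Prop.~7.2.2]{lunardi} with $X_{1/4}=H^1_{\sharp}$), which only needs the $L^2$ and $H^1$ bounds because the nonlinearity is defined on $X_{1/4}$. Your bootstrap works but note one imprecision: the single-step estimate $\|e^{\tau S}\zeta\|_m\le C\tau^{-(m-1)/4}e^{c\tau}\|\zeta\|_1$ has an integrable singularity only for $m\le 4$; for larger $m$ you must iterate gaining strictly fewer than three derivatives per step (the nonlinearity costs one and the semigroup recovers fewer than four), or equivalently carry out the energy hierarchy on $\partial_\eta^j v$, where after Leibniz the top-order contribution $-\frac12\int v_\eta(\partial_\eta^jv)^2$ is harmless once $\|v_\eta\|_\infty$ is controlled by the lower rungs of the ladder. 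With that bookkeeping made explicit, your argument is complete.
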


The above (heuristical) arguments suggest to split $\psi$ as
follows:
\begin{eqnarray*}
\psi= \Phi+ \e\rho_\e.
\end{eqnarray*}
To avoid cumbersome
notation, we simply write $\rho$ for $\rho_\e$, when there is no damage of confusion.
By assumptions (see Theorem \ref{thm:2}), the initial condition for $\rho$ is
\begin{align*}
\rho(0,\cdot)=0.
\end{align*}
Replacing into \eqref{eq-4th-order-e-bis} we get, after simplifying by
$\e$,
\begin{align}
\frac{\partial}{\partial\tau}{\mathscr B}_{\e}(\rho) +{\mathscr
H}_{\e}(\Phi_{\tau}) ={\mathscr S}(\rho)+{\mathscr
M}_{\e}((\Phi_{\eta})^2)+\e{\mathscr
F}_{\e}((\rho_{\eta})^2)+2{\mathscr F}_{\e}(\Phi_{\eta}\rho_{\eta}),
\label{pb-fin-ve}
\end{align}
for any $k=0,1,\ldots$, where the symbols of the operators
${\mathscr H}_{\e}$ and ${\mathscr M}_{\e}$ are
\begin{align}
h_{\e,k}&=\frac{1}{\e}(X^2_{\e,k}+(1+\e) X_{\e,k}-2-\e), \label{hk}
\\[1mm]
m_{\e,k}&=\frac{1}{4\e}(X^3_{\e,k}-3X_{\e,k}^2-4(1+\e)
X_{\e,k}+6+4\e), \label{mk}
\end{align}
for any $k=0,1,\ldots$

\begin{proposition}
\label{prop-H} Fix $\e\in (0,1]$. Then there exists a positive
constant $C_*$ such that the following properties are met:
\begin{enumerate}[\rm (a)]
\item
for any $s=2,3,\ldots$, the operators ${\mathscr B}_{\e}$ and
${\mathscr H}_{\e}$ admit bounded realizations $B_{\e}$ and
$H_{\e}$, respectively, mapping $H^s_{\sharp}$ into
$H^{s-2}_{\sharp}$. Moreover
\begin{eqnarray*}
\|B_{\e}\|_{L(H^s_{\sharp},H^{s-2}_{\sharp})}
+\|H_{\e}\|_{L(H^s_{\sharp},H^{s-2}_{\sharp})}\le C_*,
\end{eqnarray*}
for any $\e\in (0,1]$ and any $s$ as above. Finally, the operator
$B_{\e}$ is invertible both from $H^s_{\sharp}$ to
$H^{s-2}_{\sharp}$ for any $s=2,3,\ldots$;
\item
for any $s\ge 3$, the operators ${\mathscr F}_{\e}$ and ${\mathscr
M}_{\e}$ admit bounded realizations $F_{\e}$ and $M_{\e}$, respectively, mapping $H^s$ into $H^{s-3}$. Moreover,
\begin{eqnarray*}
\|F_{\e}\|_{L(H^s_{\sharp},H^{s-3}_{\sharp})}+\|M_{\e}\|_{L(H^s_{\sharp},H^{s-3}_{\sharp})}\le C_*,
\end{eqnarray*}
for any $\e\in (0,1]$ and any $s=3,4,\ldots$
\end{enumerate}
\end{proposition}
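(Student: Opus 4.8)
The plan is to reduce the whole statement to elementary pointwise estimates on the Fourier symbols, exactly as in the proof of Proposition \ref{prop-symb-1}, the only new feature being that every bound must be \emph{uniform} in $\e\in(0,1]$. Recall that on the rescaled period $\ell_\e$ one has $\lambda_k(\ell_\e)=\e\lambda_k$, so $X_{\e,k}=\sqrt{1+4\e\lambda_k}$, and that a Fourier multiplier with symbol $(t_k)$ satisfying $|t_k|\leq C(1+\lambda_k)^{j/2}$ maps $H^s_\sharp$ boundedly into $H^{s-j}_\sharp$ (for $s\geq j$; the zero mode being controlled directly), with norm $\leq C'C$ for a constant $C'$ depending only on $\ell_0$, and is boundedly invertible as soon as in addition $\inf_k|t_k|>0$. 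Hence it suffices to prove, uniformly in $\e\in(0,1]$ and $k\geq 0$, the bounds
\[
1\leq b_{\e,k}\leq C_*(1+\lambda_k),\quad |h_{\e,k}|\leq C_*(1+\lambda_k),\quad |f_{\e,k}|+|m_{\e,k}|\leq C_*(1+\lambda_k)^{3/2}.
\]

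For the computation I would use repeatedly the identities $X_{\e,k}^2-1=4\e\lambda_k$ and $X_{\e,k}-1=4\e\lambda_k/(X_{\e,k}+1)$, together with $1\leq X_{\e,k}\leq\sqrt{1+4\lambda_k}$ and $X_{\e,k}\geq 2\sqrt{\e\lambda_k}$, all valid because $\e\leq 1$. Writing $X=X_{\e,k}$, a direct rearrangement gives $b_{\e,k}=4\e\lambda_k+(1+\e)X-\e$, whence $b_{\e,k}\geq(1+\e)\cdot1-\e=1$ and $b_{\e,k}\leq 4\lambda_k+2\sqrt{1+4\lambda_k}\leq C_*(1+\lambda_k)$; this already yields part (a) for $B_\e$ together with its invertibility and $\|B_\e^{-1}\|\leq 1$. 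Next, from \eqref{hk}, $h_{\e,k}=\e^{-1}(b_{\e,k}-1)=4\lambda_k+\e^{-1}(1+\e)(X-1)=4\lambda_k\bigl(1+\tfrac{1+\e}{X+1}\bigr)$, so $4\lambda_k\leq h_{\e,k}\leq 8\lambda_k$, completing (a). For $f_{\e,k}$ in \eqref{fk} one has $f_{\e,k}=\tfrac14\bigl((1+4\e\lambda_k)^{3/2}-3(1+4\e\lambda_k)-4(1+\e)X+4+4\e\bigr)$, and estimating each of the four terms with $\e\leq 1$ gives $|f_{\e,k}|\leq C_*(1+\lambda_k)^{3/2}$ at once.

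The only symbol that requires care is $m_{\e,k}$ from \eqref{mk}, because of the explicit prefactor $\e^{-1}$. Here I would first record the algebraic identity $X^3-3X^2-4(1+\e)X+6+4\e=(X-1)(X^2-2X-6-4\e)$, equivalently $4\bigl(f_{\e,k}+\tfrac12\bigr)=(X-1)(X^2-2X-6-4\e)$; substituting $X-1=4\e\lambda_k/(X+1)$ the factor $\e^{-1}$ cancels and one obtains the closed form
\[
m_{\e,k}=\frac{f_{\e,k}+\tfrac12}{\e}=\frac{\lambda_k\,(X^2-2X-6-4\e)}{X+1}=\frac{\lambda_k\,(4\e\lambda_k-2X-5-4\e)}{X+1}.
\]
In this expression the terms $2\lambda_kX/(X+1)$ and $(5+4\e)\lambda_k/(X+1)$ are plainly $O(\lambda_k)$, while the potentially quadratic term $4\e\lambda_k^2/(X+1)$ is bounded, using $X+1\geq X\geq 2\sqrt{\e\lambda_k}$, by $2\sqrt\e\,\lambda_k^{3/2}\leq 2\lambda_k^{3/2}$; hence $|m_{\e,k}|\leq C_*(1+\lambda_k)^{3/2}$ with $C_*$ independent of $\e$, which gives (b). The zero mode is consistent with all of this: since $\lambda_0=0$ and $X_{\e,0}=1$ one finds $b_{\e,0}=1$, $h_{\e,0}=0$, $f_{\e,0}=-\tfrac12$, $m_{\e,0}=0$.

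I expect the estimate on $m_{\e,k}$ (and, to a lesser extent, on $h_{\e,k}$) to be the main obstacle. The subtraction of the constants in \eqref{hk} and \eqref{mk} is precisely what produces the vanishing factor $X_{\e,k}-1$ that absorbs $\e^{-1}$; but for $m_{\e,k}$ this factor alone, being only of order $\e\lambda_k/X_{\e,k}$, controls the bad term $4\e\lambda_k^2/(X_{\e,k}+1)$ merely down to order $\lambda_k^2$. One genuinely needs in addition the extra $X_{\e,k}$ in the denominator, i.e. the lower bound $X_{\e,k}\geq 2\sqrt{\e\lambda_k}$, to recover the sharp order $\lambda_k^{3/2}$ with an $\e$-free constant. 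This is exactly the special structure of $\mathscr F_\e$ — inherited by $\mathscr M_\e$ — that is exploited in the a priori estimates of the following subsections.
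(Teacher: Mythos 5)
Your proposal is correct and follows essentially the same route as the paper: everything reduces to pointwise symbol bounds uniform in $\e\in(0,1]$, with the cancellation of $\e^{-1}$ in $h_{\e,k}$ and $m_{\e,k}$ coming from the factor $X_{\e,k}-1=4\e\lambda_k/(X_{\e,k}+1)$ and the crucial term $4\e\lambda_k^2/(X_{\e,k}+1)$ tamed by $X_{\e,k}\ge 2\sqrt{\e\lambda_k}$. The only (cosmetic) difference is that you reach the bound on $m_{\e,k}$ via an exact factorization of the cubic, whereas the paper rationalizes $(1+4\e\lambda_k)^{3/2}-1$ termwise; both give $|m_{\e,k}|\le 2\sqrt{\e}\,\lambda_k^{3/2}+C\lambda_k$ and hence the stated uniform bounds.
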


\begin{proof}
(a). A straightforward computation shows that
\begin{align*}
|h_{\e,k}|= 4\lambda_k+\frac{4(\e+1)\lambda_k}{\sqrt{1+4\e\lambda_k}+1}
\le 4\lambda_k+2(\e+1)\lambda_k=(6+2\e)\lambda_k,
\end{align*}
for any $k=0,1,\ldots$ and any $\e\in (0,1]$. This shows that ${\mathscr H}_{\e}$ admits a
bounded realization mapping $H^s_{\sharp}$ into $H^{s-2}_{\sharp}$
for any $s\ge 2$ and its norm can be bounded by a constant,
independent of $\e\in (0,1]$.
%Moreover, the operator ${\mathscr
%H}_{\e}'$ whose symbol is $h_k'=\frac{\e+1}{\e}\left
%(\sqrt{1+4\e\lambda_k}-1\right )$ for any $k=0,1,\ldots$, admits a
%realization which is bounded from $H^{s+1}_{\sharp}$ into
%$H^s_{\sharp}$ for any $s\ge 0$.

Since $b_{\e,k}=\e h_{\e,k}+1$ for any $k=0,1,\ldots$, the
boundedness of the operator $B_{\e}$ from $H^s_{\sharp}$ to
$H^{s-2}_{\sharp}$ follows at once.

Showing that the operator $B_{\e}$ is invertible from $H^s_{\sharp}$
into $H^{s-2}_{\sharp}$ is an easy task. It suffices to observe that
$b_{\e,k}\ge 4\e\lambda_k+1$ for any $k=0,1,\ldots$.

(b). Since $f_k=\e m_{\e,k}-1/2$ for any
$k=0,1,\ldots$, we can limit ourselves to considering the operator
${\mathscr M}_{\e}$. A simple computation shows that
\begin{align*}
|m_{\e,k}|&\le \frac{(1+4\e\lambda_k)^{\frac{3}{2}}-1}{4\e}+3\lambda_k+4\frac{1+\e}{\e}\left (\sqrt{1+4\e\lambda_k}-1\right )\\
&\le\frac{16\e^2\lambda_k^3+12\e\lambda_k^2+3\lambda_k}{(1+4\e\lambda_k)^{\frac{3}{2}}+1}
+19\lambda_k\\
&\le\frac{16\e^2\lambda_k^3}{(1+4\e\lambda_k)^{\frac{3}{2}}}
+\frac{12\e\lambda_k^2}{1+4\e\lambda_k}
+22\lambda_k\\
&\le 2\sqrt{\e}\lambda_k^{\frac{3}{2}}
+25\lambda_k,
\end{align*}
for any $k=0,1,2,\ldots$ Hence, $M_{\e}$ is well defined (and
bounded) in $H^s_{\sharp}$ with values in $H^{s-3}_{\sharp}$ for any
$s\ge 3$. Since its symbol can be estimated from above uniformly with
respect to $\e\in (0,1]$, the assertion follows immediately.
\end{proof}

Since all the operators appearing in \eqref{pb-fin-ve} commute with
$D_{\eta}$, the {\it differentiated} problem for $\zeta:=\rho_{\eta}$ reads as follows:
\begin{equation}
\frac{\partial}{\partial\tau}{\mathscr B}_{\e}(\zeta) +{\mathscr H}_{\e}(\Psi_{\tau}) ={\mathscr S}(\zeta)+{\mathscr M}_{\e}((\Psi^2)_{\eta})+\e {\mathscr F}_{\e}((\zeta^2)_{\eta})+2{\mathscr F}_{\e}((\Psi\zeta)_{\eta}),
\label{eq-final-ve-1}
\end{equation}
where we have set $\Psi=\Phi_{\eta}$. Obviously, it has a null
initial condition at time $\tau=0$. For simplicity, we denote
$D_{\eta}$ by $D$. For an integer $n \geq 1$, $D^n$ is the
differentiation operator of order $n$. We also set $D^0={\rm Id}$.

\subsection{Formal a priori estimates}\label{Formal-estimates}

For any $n=0,1,2,\ldots$ and any $T>0$, we set
\begin{eqnarray*}
{\mathscr Y}_n(T)=\left\{\zeta\in C([0,T];H^{4\vee 2n}_{\sharp})\cap C^1([0,T];L^2):\zeta_{\tau}\in C([0,T];H^{2\vee (n+1)}_{\sharp})\right\},
\end{eqnarray*}
where $a\vee b:=\max\{a,b\}$.
The main result of this subsection is contained in the following theorem.

\begin{theorem}
\label{cor-apriori-estim} Fix an integer $n\geq 0$ and $T>0$.
Further, fix $m$ in Theorem $\ref{thm-4.1}$ large enough such that
$\Psi\in C([0,T];H^{n+4}_{\sharp})\cap C^1([0,T];L^2)$ and $\Psi_{\tau}\in C([0,T];H^{n+2}_{\sharp})$.
Then, there exist
$\varepsilon_1=\e_1(n,T)\in (0,1)$ and $K_n=K_n(n,T)>0$ such that, if
$\zeta\in {\mathscr Y}_n(T_1)$ is a solution on the time interval $[0,T_1]$ of
Equation \eqref{eq-final-ve-1} for some $T_1\le T$, then
\begin{align}\label{main-estimate}
\sup_{\tau\in [0,T_1]}
\int_{-\frac{\ell_0}{2}}^{\frac{\ell_0}{2}}&|D^{n}\zeta(\tau,\cdot)|^2d\eta
+\e\sup_{\tau\in [0,T_1]}
\int_{-\frac{\ell_0}{2}}^{\frac{\ell_0}{2}}|D^{n+1}\zeta(\tau,\cdot)|^2d\eta
\leq K_n,
\end{align}
whenever $0<\varepsilon \leq\varepsilon_1$.
\end{theorem}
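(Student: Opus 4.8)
The plan is a weighted energy estimate, at order $n$, run on the differentiated equation \eqref{eq-final-ve-1}, arguing by induction on $n$ so that at level $n$ the bounds $\sup_{[0,T_1]}\|\zeta(\tau,\cdot)\|_j\le K_j^{1/2}$ for $j<n$ are already available (and $\e_1(n,T)$ is chosen $\le\e_1(n-1,T)$). Here $\|\cdot\|_s$ is the $H^s_\sharp$-norm over $(-\ell_0/2,\ell_0/2)$; since $\zeta(\tau,\cdot)=D\rho(\tau,\cdot)$ has zero mean, $\|D^m\zeta\|_0=\|\zeta\|_m$, and one has the interpolation $\|\zeta\|_{n+3/2}^2\le\|\zeta\|_n^{1/2}\|\zeta\|_{n+2}^{3/2}$ and the Poincar\'e inequality $\|\zeta\|_j\le C\|\zeta\|_{j+2}$. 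Take as energy
\[
E_n(\tau):=\langle\mathscr B_\e D^n\zeta(\tau,\cdot),D^n\zeta(\tau,\cdot)\rangle=\sum_{k\ge1}b_{\e,k}\lambda_k^n|\widehat\zeta(\tau,k)|^2 .
\]
From \eqref{fk}, $b_{\e,k}=(1+4\e\lambda_k)+(1+\e)(X_{\e,k}-1)$, hence $1+4\e\lambda_k\le b_{\e,k}\le1+8\e\lambda_k$ for $\e\in(0,1]$, so $E_n$ is equivalent, uniformly in $\e$, to $\|\zeta\|_n^2+\e\|\zeta\|_{n+1}^2$; thus \eqref{main-estimate} follows once $\sup_{[0,T_1]}E_n$ is bounded. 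Moreover $E_n(0)=0$ because $\rho(0,\cdot)=0$, whence $\zeta(0,\cdot)=0$. As $\mathscr B_\e$ is self-adjoint, $\tau$-independent and commutes with $D$, differentiating $E_n$ and inserting \eqref{eq-final-ve-1} expresses $\tfrac12 E_n'$ as the sum of the five terms $\langle\mathscr S D^n\zeta,D^n\zeta\rangle$, $\langle D^n\mathscr M_\e((\Psi^2)_\eta),D^n\zeta\rangle$, $\e\langle D^n\mathscr F_\e((\zeta^2)_\eta),D^n\zeta\rangle$, $2\langle D^n\mathscr F_\e((\Psi\zeta)_\eta),D^n\zeta\rangle$ and $-\langle D^n\mathscr H_\e\Psi_\tau,D^n\zeta\rangle$.

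The first term equals $-4\|\zeta\|_{n+2}^2+\|\zeta\|_{n+1}^2$ (recall $s_k=-4\lambda_k^2+\lambda_k$) and, interpolating away $\|\zeta\|_{n+1}^2$, it supplies the parabolic dissipation, say $\le-3\|\zeta\|_{n+2}^2+C\|\zeta\|_n^2$. The two $\Psi$-driven terms are estimated by Cauchy--Schwarz together with the $\e$-uniform operator bounds of Proposition \ref{prop-H}, the algebra property of $H^{n+4}_\sharp$ (applied to $\Psi^2$) and the hypotheses $\Psi\in C([0,T];H^{n+4}_\sharp)$, $\Psi_\tau\in C([0,T];H^{n+2}_\sharp)$; they are $\le C_T(1+\|\zeta\|_n)\le C_T(1+E_n)$, where $C_T$ depends only on $T$ and on the ($\e$-independent) norms of $\Psi$ and $\Psi_\tau$.

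The crux is the remaining pair $2\mathscr F_\e((\Psi\zeta)_\eta)$ and $\e\mathscr F_\e((\zeta^2)_\eta)$, which are genuinely of fully nonlinear type: $\zeta\mapsto\mathscr F_\e((\Psi\zeta)_\eta)$ is of order four, the same as $\mathscr S$. I would exploit the decomposition $\mathscr F_\e=-\tfrac12 I+\e\mathscr M_\e$, i.e.\ $f_{\e,k}=\e m_{\e,k}-\tfrac12$, and symmetrize. Since the pseudodifferential factor of $\mathscr F_\e$ is self-adjoint and $D$ is skew-adjoint, $\mathscr F_\e D$ is skew-adjoint; expanding $D^{n+1}(\Psi\zeta)$ by Leibniz, the top piece $\langle\Psi\,\mathscr F_\e D^{n+1}\zeta,D^n\zeta\rangle$ reduces to $-\tfrac12\langle D^n\zeta,[\mathscr F_\e D,\Psi]D^n\zeta\rangle$ — in particular the ``$\Psi=\mathrm{const}$'' contribution cancels. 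The commutator $[\mathscr F_\e D,\Psi]$ is self-adjoint; its order-zero part, coming from the $-\tfrac12 D$ piece of $\mathscr F_\e D$, is $-\tfrac12\Psi_\eta$, producing the familiar Kuramoto--Sivashinsky quadratic form $\tfrac12\langle\Psi_\eta,(D^n\zeta)^2\rangle\le C\|\Psi_\eta\|_\infty\|\zeta\|_n^2\le C_TE_n$; its parts of orders $1,2,3$ carry, respectively, the prefactors $\e,\e,\e^{3/2}$ — because $|f_{\e,k}|\le2\e^{3/2}\lambda_k^{3/2}+25\e\lambda_k+\tfrac12$ (see the proof of Proposition \ref{prop-H}(b)) — so, splitting the corresponding powers of $|D|$ symmetrically between the two slots, using a tame product estimate in fractional $H^s_\sharp$ and the interpolation $\|\zeta\|_{n+3/2}^2\le\|\zeta\|_n^{1/2}\|\zeta\|_{n+2}^{3/2}$, Young's inequality bounds them by $\delta\|\zeta\|_{n+2}^2+C_TE_n$; it is precisely the positive power of $\e$ that makes the absorption into the dissipation $-3\|\zeta\|_{n+2}^2$ possible. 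The quadratic term $\e\mathscr F_\e((\zeta^2)_\eta)$ is handled the same way with one more power of $\e$: its $-\tfrac12 I$ part produces, after the Kuramoto--Sivashinsky manipulation, trilinear expressions in $\zeta$ with prefactor $\e$ (and $\e^{5/2}$ for the $\e\mathscr M_\e$ part) bounded, via Gagliardo--Nirenberg/Agmon inequalities (using that $\zeta$ has zero mean) and the inductive bounds, by $\e\,C_T(1+E_n^{3/2})$. This bookkeeping — skew/self-adjointness reducing the top-order terms to commutators one order lower, powered by the $\e$-gain hidden in $f_{\e,k}=\e m_{\e,k}-\tfrac12$ — is exactly what I expect to be the main obstacle; it is the ``extension of a lemma'' from \cite{BFHLS,BHL09} alluded to in the Introduction.

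Collecting the bounds, on $[0,T_1]$ one gets
\[
E_n'(\tau)\le-\|\zeta(\tau,\cdot)\|_{n+2}^2+C_T\big(1+E_n(\tau)\big)+\e\,C_T\big(1+E_n(\tau)^{3/2}\big),
\]
with $C_T$ independent of $\e$ and of $T_1\le T$. Since $E_n(0)=0$, a continuation argument closes the estimate: with $\widehat K_n:=e^{C_TT}-1$ (the bound from the linear Gronwall inequality $E_n'\le C_T(1+E_n)$), the extra term stays $\le\e\,C_T(1+(2\widehat K_n)^{3/2})$ as long as $E_n\le2\widehat K_n$, which for $\e$ below a threshold $\e_1(n,T)$ keeps $E_n(\tau)\le\widehat K_n$ on all of $[0,T_1]$, uniformly in $T_1\le T$ and $0<\e\le\e_1(n,T)$. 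Taking $K_n:=\widehat K_n$ yields \eqref{main-estimate}. The base case $n=0$ is the same and slightly simpler, since there the $-\tfrac12 I$ part of the quadratic term is $-\e\int_{-\ell_0/2}^{\ell_0/2}\zeta^2\zeta_\eta\,d\eta=0$, completing the induction.
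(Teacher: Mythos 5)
Your proof is correct in its overall architecture and reaches the same differential inequality/continuation endgame as the paper, but it takes a genuinely different route through the fully nonlinear terms. The energy you choose, $E_n=\langle \mathscr B_\e D^n\zeta,D^n\zeta\rangle$, is exactly the paper's quantity $A_\e=|D^n\zeta|_2^2+4\e|D^{n+1}\zeta|_2^2+(1+\e)|\sqrt{R_\e}D^n\zeta|_2^2$ from Lemma \ref{lemma-1}, and your treatment of $\mathscr S$, $\mathscr H_\e(\Psi_\tau)$ and $\mathscr M_\e((\Psi^2)_\eta)$ matches Lemma \ref{lemma-2}. Where you diverge is the pair $\mathscr F_\e((\Psi\zeta)_\eta)$, $\e\mathscr F_\e((\zeta^2)_\eta)$: you symmetrize, using skew-adjointness of $\mathscr F_\e D$ to reduce the top-order contribution to a commutator $[\mathscr F_\e D,\Psi]$ of one order lower, whose order-zero part reproduces the Kuramoto--Sivashinsky cancellation and whose higher-order parts carry positive powers of $\e$. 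The paper never forms a commutator: it estimates these integrals directly in Fourier variables (estimate \eqref{estim-6}), splitting the powers of $\lambda_k$ \emph{asymmetrically} between the two slots (e.g.\ $\tfrac12+n=\tfrac{n-1}{2}+\tfrac{n+2}{2}$) so that the $O(1)$ part of the symbol lands on $|D^n(\Psi\zeta)|_2|D^{n+1}\zeta|_2$, which Young's inequality absorbs without any antisymmetry; the $\e$-weighted parts are absorbed into the dissipation exactly as in your scheme. Your approach buys conceptual transparency (it makes visible \emph{why} the constant-coefficient top-order piece is harmless), at the price of needing Kato--Ponce-type commutator bounds for the multipliers $(I-4\e D_{\eta\eta})^{s/2}$, which you assert but do not prove; the paper's approach is more elementary and entirely self-contained. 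Your induction on $n$ is also not needed in the paper's scheme: the level-$n$ estimate there is closed at level $n$ via the ODE Lemma \ref{lem-5}, whose hypothesis $(c_0-\e A_\e)f_\e$ is precisely designed to handle the term you slightly misplace.

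Two small inaccuracies, neither fatal. First, the order-one part of $f_{\e,k}$ carries $\sqrt{\e}$, not $\e$ (from $4(1+\e)(X_{\e,k}-1)\sim 8\sqrt{\e\lambda_k}$); the absorption still works since any positive power of $\e$ suffices. Second, your final displayed inequality $E_n'\le-\|\zeta\|_{n+2}^2+C_T(1+E_n)+\e C_T(1+E_n^{3/2})$ is not quite what your estimates produce: the worst piece of the trilinear term is of the form $\e^{5/2}\|\zeta\|_n\|\zeta\|_{n+2}^2$, which cannot be dominated by $\e C_T(1+E_n^{3/2})$ and must instead be kept as a state-dependent correction to the dissipation coefficient, as in the paper's \eqref{ineq-cruc} and Lemma \ref{lem-5}. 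Your continuation argument (``as long as $E_n\le 2\widehat K_n$'') does in fact cover this term once it is displayed correctly, so the proof closes; but as written the bookkeeping of that term is off.
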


To prove \eqref{main-estimate}, we multiply both sides of the equation \eqref{eq-final-ve-1} by
$(-1)^n D^{2n}\zeta$ and integrate by parts over
$(-\ell_0/2,\ell_0/2)$. We thus get
\begin{align}
&(-1)^n\int_{-\frac{\ell_0}{2}}^{\frac{\ell_0}{2}}B_{\e}(\zeta_{\tau}(\tau,\cdot))D^{2n}\zeta(\tau,\cdot)d\eta
-(-1)^n\int_{-\frac{\ell_0}{2}}^{\frac{\ell_0}{2}}S(\zeta(\tau,\cdot))D^{2n}\zeta(\tau,\cdot)d\eta \notag \\
=&  -(-1)^n\int_{-\frac{\ell_0}{2}}^{\frac{\ell_0}{2}}\left
(H_{\e}(\Psi_{\tau}(\tau,\cdot))
-M_{\e}((\Psi^2)_{\eta}(\tau,\cdot))\right ) D^{2n}\zeta(\tau,\cdot)d\eta\notag\\
&+(-1)^n\e\int_{-\frac{\ell_0}{2}}^{\frac{\ell_0}{2}}F_{\e}((\zeta^2)_{\eta}(\tau,\cdot))
D^{2n}\zeta(\tau,\cdot)d\eta \notag\\
&+2(-1)^n\int_{-\frac{\ell_0}{2}}^{\frac{\ell_0}{2}}F_{\e}((\Psi\zeta)_{\eta}(\tau,\cdot))
D^{2n}\zeta(\tau,\cdot)d\eta,
\label{variational}
\end{align}
where $S=-4D_{\eta\eta\eta\eta}-D_{\eta\eta}$.

In the following lemmata we estimate all the terms appearing in the
previous equation. We first deal with the left-hand side of
\eqref{variational} which consists of the ``benign'' terms.
\begin{lemma}
\label{lemma-1}
Fix $n=0,1,\ldots$, $\e>0$, $T_1\le T$ and $\zeta\in {\mathscr Y}_n(T_1)$. Then,
\begin{align}
&(-1)^n\int_{-\frac{\ell_0}{2}}^{\frac{\ell_0}{2}}B_{\e}(\zeta_{\tau}(\tau,\cdot))D^{2n}\zeta(\tau,\cdot)d\eta
- (-1)^n\int_{-\frac{\ell_0}{2}}^{\frac{\ell_0}{2}}S(\zeta(\tau,\cdot))D^{2n}\zeta(\tau,\cdot)d\eta\notag\\
=&\frac{1}{2}\frac{d}{d\tau}\int_{-\frac{\ell_0}{2}}^{\frac{\ell_0}{2}}|D^{n}\zeta(\tau,\cdot)|^2d\eta
+2\e\frac{d}{d\tau}\int_{-\frac{\ell_0}{2}}^{\frac{\ell_0}{2}}|D^{n+1}\zeta(\tau,\cdot)|^2d\eta\notag\\
&+\frac{(1+\e)}{2}\frac{d}{d\tau}\int_{-\frac{\ell_0}{2}}^{\frac{\ell_0}{2}}
\langle R_{\e}D^{n}\zeta(\tau,\cdot),D^{n}\zeta(\tau,\cdot)\rangle d\eta\notag\\
&+4\int_{\frac{\ell_0}{2}}^{\frac{\ell_0}{2}}|D^{n+2}\zeta(\tau,\cdot)|^2d\eta-
\int_{\frac{\ell_0}{2}}^{\frac{\ell_0}{2}}|D^{n+1}\zeta(\tau,\cdot)|^2d\eta,
\label{estim-1}
\end{align}
where $R_{\e}:H^1_{\sharp}\to L^2$ is the positive
operator whose symbol is $(X_{\e,k}-1)$.
\end{lemma}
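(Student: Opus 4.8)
The plan is to expand the nonlocal operator $B_\e$ into elementary pieces and integrate by parts, using throughout that every operator occurring on the left of \eqref{variational} is a Fourier multiplier in $\eta$ --- hence commutes with $D=D_\eta$ and is self-adjoint in $L^2$ --- and that, by the definition of $\mathscr Y_n(T_1)$, both $\zeta$ and $\zeta_\tau$ are $\ell_0$-periodic in $\eta$ and carry enough spatial regularity for the manipulations below to be legitimate (in particular $D^n\zeta,\,D^n\zeta_\tau\in H^1_\sharp$ for every $n\ge 0$). First I would record the splitting
\[
B_\e = I-4\e D_{\eta\eta}+(1+\e)R_\e,\qquad R_\e=\sqrt{I-4\e D_{\eta\eta}}-I ,
\]
so that $R_\e$ is exactly the nonnegative self-adjoint multiplier with symbol $X_{\e,k}-1=\sqrt{1+4\e\lambda_k}-1\ge 0$; since $X_{\e,k}-1\sim 2\sqrt{\e\lambda_k}$ as $k\to\infty$, the operator $R_\e$ maps $H^1_\sharp$ into $L^2$, which is all that is needed for the last pairing in \eqref{estim-1} to make sense.

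Next I would move $n$ derivatives off the factor $D^{2n}\zeta$: integrating by parts $n$ times and using periodicity, for any Fourier multiplier $T$ one has $(-1)^n\int_{-\ell_0/2}^{\ell_0/2} Tf\,D^{2n}\zeta\,d\eta=\int_{-\ell_0/2}^{\ell_0/2} T(D^nf)\,D^n\zeta\,d\eta$. Writing $w=D^n\zeta$, the first term on the left of \eqref{variational} becomes $\int_{-\ell_0/2}^{\ell_0/2}B_\e(w_\tau)\,w\,d\eta$, which by the splitting above equals
\[
\int_{-\frac{\ell_0}{2}}^{\frac{\ell_0}{2}}w_\tau w\,d\eta-4\e\int_{-\frac{\ell_0}{2}}^{\frac{\ell_0}{2}}D_{\eta\eta}(w_\tau)\,w\,d\eta+(1+\e)\int_{-\frac{\ell_0}{2}}^{\frac{\ell_0}{2}}R_\e(w_\tau)\,w\,d\eta .
\]
The first integral is $\tfrac12\frac{d}{d\tau}\int w^2\,d\eta$; two further integrations by parts turn the second into $2\e\frac{d}{d\tau}\int (D_\eta w)^2\,d\eta$; and, since $R_\e$ is self-adjoint and commutes with $\partial_\tau$, the third equals $\tfrac{1+\e}{2}\frac{d}{d\tau}\int\langle R_\e w,w\rangle\,d\eta$. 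Recalling $w=D^n\zeta$, these are precisely the first three terms on the right of \eqref{estim-1}.

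For the remaining term on the left of \eqref{variational}, I would use $-S=4D^4+D^2$ (with $D=D_\eta$), so that $-(-1)^n\int S(\zeta)D^{2n}\zeta\,d\eta=(-1)^n\int (4D^4\zeta+D^2\zeta)D^{2n}\zeta\,d\eta$; integrating by parts and using periodicity gives $(-1)^n\int D^4\zeta\,D^{2n}\zeta\,d\eta=(-1)^n\int|D^{n+2}\zeta|^2\,d\eta$ and $(-1)^n\int D^2\zeta\,D^{2n}\zeta\,d\eta=-(-1)^n\int|D^{n+1}\zeta|^2\,d\eta$, whence this term equals $4\int|D^{n+2}\zeta|^2\,d\eta-\int|D^{n+1}\zeta|^2\,d\eta$. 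Adding the two contributions produces \eqref{estim-1}.

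The argument is purely computational and I do not expect a genuine obstacle; the only points demanding a little care are getting all the signs and the constants $\tfrac12$, $2\e$, $\tfrac{1+\e}{2}$, $4$, $-1$ right through the repeated integrations by parts, correctly separating the exact $\tau$-derivatives from the two genuinely dissipative terms, and verifying that the regularity built into $\mathscr Y_n(T_1)$ justifies commuting $B_\e$ and $R_\e$ past the derivatives and interpreting $R_\e w_\tau$ as an element of $L^2$.
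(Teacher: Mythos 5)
Your proof is correct and follows essentially the same route as the paper's: the paper uses the identical decomposition $B_\e = I-4\e D_{\eta\eta}+(1+\e)R_\e$ and the same self-adjointness/periodicity argument, merely carried out in Fourier coordinates instead of by integration by parts in physical space. (The only blemish is a spurious factor $(-1)^n$ on the right-hand sides of your two intermediate identities for the $S$-term --- the correct statements are $(-1)^n\int D^4\zeta\,D^{2n}\zeta\,d\eta=\int|D^{n+2}\zeta|^2\,d\eta$ and $(-1)^n\int D^2\zeta\,D^{2n}\zeta\,d\eta=-\int|D^{n+1}\zeta|^2\,d\eta$ --- but your final conclusion is the right one.)
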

\begin{proof}
For any $\zeta\in {\mathscr Y}_n(T_1)$, we can estimate
\begin{align}
&\int_{-\frac{\ell_0}{2}}^{\frac{\ell_0}{2}}B_{\e}(\zeta_{\tau}(\tau,\cdot))D^{2n}\zeta(\tau,\cdot)
d\eta
= (-1)^n\sum_{k=1}^{+\infty}b_{\e,k}\widehat\zeta_{\tau}(\tau,k)\lambda_k^n\widehat\zeta(\tau,k)\notag\\
=&(-1)^n\sum_{k=1}^{+\infty}\lambda_k^n\widehat\zeta_{\tau}(\tau,k)\widehat\zeta(\tau,k)
+4(-1)^n \e\sum_{k=1}^{+\infty}\lambda_k^{n+1}\widehat\zeta_{\tau}(\tau,k)\widehat\zeta(\tau,k)\notag\\
&+ (-1)^n  (1+\e)\sum_{k=1}^{+\infty}\lambda_k^n (X_{\e,k}-1)\widehat\zeta_{\tau}(\tau,k)\widehat\zeta(\tau,k)\notag\\
=&\int_{-\frac{\ell_0}{2}}^{\frac{\ell_0}{2}}\zeta_{\tau}(\tau,\cdot)D^{2n}\zeta(\tau,\cdot)d\eta
-4\e\int_{-\frac{\ell_0}{2}}^{\frac{\ell_0}{2}}\zeta_{\tau\eta\eta}(\tau,\cdot) D^{2n}\zeta(\tau,\cdot)d\eta\notag\\
&+(1+\e)\int_{-\frac{\ell_0}{2}}^{\frac{\ell_0}{2}}\zeta_{\tau}(\tau,\cdot)(\sqrt{I-4\e
D_{\eta\eta}}-I)D^{2n}\zeta(\tau,\cdot)
d\eta\notag\\
=&(-1)^n\int_{-\frac{\ell_0}{2}}^{\frac{\ell_0}{2}}(D^{n}\zeta)_{\tau}(\tau,\cdot)D^{n}\zeta(\tau,\cdot)d\eta
+4(-1)^n\e\int_{-\frac{\ell_0}{2}}^{\frac{\ell_0}{2}}D^{n+1}(\zeta_{\tau}(\tau,\cdot))D^{n+1}\zeta(\tau,\cdot)d\eta\notag\\
&+(-1)^n(1+\e)\int_{-\frac{\ell_0}{2}}^{\frac{\ell_0}{2}}\langle
R_{\e}D^n\zeta(\tau,\cdot),
(D^n\zeta)_{\tau}(\tau,\cdot)\rangle d\eta\notag\\
&-(-1)^n(1+\e)\int_{-\frac{\ell_0}{2}}^{\frac{\ell_0}{2}}(D^n\zeta)_{\tau}(\tau,\cdot)D^n\zeta(\tau,\cdot)d\eta\notag\\
=&(-1)^n\bigg\{\frac{1}{2}\frac{d}{d\tau}\int_{-\frac{\ell_0}{2}}^{\frac{\ell_0}{2}}|D^n\zeta(\tau,\cdot)|^2d\eta
+2\e\frac{d}{d\tau}\int_{-\frac{\ell_0}{2}}^{\frac{\ell_0}{2}}|D^{n+1}\zeta(\tau,\cdot)|^2d\eta\notag\\
&\qquad\qquad+\frac{(1+\e)}{2}\frac{d}{d\tau}\int_{-\frac{\ell_0}{2}}^{\frac{\ell_0}{2}}
\langle R_{\e}D^n \zeta(\tau,\cdot),D^n \zeta(\tau,\cdot)\rangle d\eta \bigg\},\notag\\
\label{estim-2}
\end{align}
for any $\tau\in [0,T_1]$. On the other hand, a straightforward computation shows that
\begin{align}
\int_{-\frac{\ell_0}{2}}^{\frac{\ell_0}{2}}S(\zeta(\tau,\cdot))\,D^{2n}\zeta(\tau,\cdot)d\eta=&
-4(-1)^n\int_{-\frac{\ell_0}{2}}^{\frac{\ell_0}{2}}|D^{n+2}\zeta(\tau,\cdot)|^2d\eta
\label{estim-3}\\
&+(-1)^n\int_{-\frac{\ell_0}{2}}^{\frac{\ell_0}{2}}|D^{n+1}\zeta(\tau,\cdot)|^2d\eta,
\notag
\end{align}
for any $\tau\in [0,T_1]$.
Combining \eqref{estim-2} and \eqref{estim-3}, Estimate \eqref{estim-1} follows at once.
\end{proof}

We now deal with the other terms in \eqref{variational}.

\begin{lemma}
\label{lemma-2}
Fix $n=0,1,\ldots$, $T_1\le T$ and assume that $\Psi\in C([0,T_1];H^{n+4}_{\sharp})$ and
$\Psi_{\tau}\in C([0,T_1];H^{n+2}_{\sharp})$. Then, there exist a positive constant $C_n$, independent of $\e\in (0,1]$ and $T_1$, and constants
$K_1'(n,\Psi)$ and $K_2'(n,\Psi)$ such that the following estimates hold
\begin{align}
&\left |\int_{-\frac{\ell_0}{2}}^{\frac{\ell_0}{2}}\left
(H_{\e}(\Psi_{\tau}(\tau,\eta))-M_{\e}((\Psi^2)_{\eta}(\tau,\cdot))\right
)D^{2n}\zeta(\tau,\cdot)d\eta\right | \leq K_1'(n,\Psi)+|D^n
\zeta(\tau,\cdot)|_2^2; \label{estim-bad-1}
\\[2mm]
&\left
|\int_{-\frac{\ell_0}{2}}^{\frac{\ell_0}{2}}F_{\e}((\zeta^2)_{\eta})D^{2n}\zeta(\tau,\cdot)d\eta\right
| \le
C_n\e^{\frac{3}{2}}|D^n\zeta(\tau,\cdot)|_2|D^{n+2}\zeta(\tau,\cdot)|_2^2
+C_n|D^n\zeta(\tau,\cdot)|_2^4
\label{estim-bad-2}\\
&\qquad\qquad\qquad\qquad\qquad\qquad\qquad+C_n\e^2|D^{n+1}\zeta(\tau,\cdot)|_2^4
+C_n|D^{n+2}\zeta(\tau,\cdot)|_2^2;\notag\\[1mm]
&\left|\int_{-\frac{\ell_0}{2}}^{\frac{\ell_0}{2}}F_{\e}((\Psi\zeta)_{\eta}(\tau,\cdot))
D^{2n}\zeta(\tau,\cdot)d\eta\right |
\label{estim-bad-3}\\
\le &K_2'(n,\Psi)\left (\e|D^{n+2}\zeta(\tau,\cdot)|_2^2+\e|D^{n+1}\zeta(\tau,\cdot)|_2^2
+|D^n\zeta(\tau,\cdot)|_2^2\right )
+\frac{1}{4}|D^{n+2}\zeta(\tau,\cdot)|_2^2,\notag
\end{align}
for any $\tau\in [0,T_1]$ and any $\zeta\in {\mathscr Y}_n(T_1)$.
\end{lemma}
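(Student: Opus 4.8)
The three estimates are all proved by the same mechanism: expand the pseudodifferential operators $H_\e$, $M_\e$, $F_\e$ in Fourier series, use the uniform-in-$\e$ symbol bounds from Proposition \ref{prop-H}, and then trade powers of $\lambda_k$ against $L^2$-norms of $\eta$-derivatives of $\zeta$ via Parseval, finishing with Young's inequality to absorb top-order terms with small constants. First I would dispose of \eqref{estim-bad-1}: by Proposition \ref{prop-H}(a)--(b), $H_\e(\Psi_\tau)$ and $M_\e((\Psi^2)_\eta)$ lie in $H^{n+\theta}_{\sharp}$ (with $\theta\ge 0$) with norms bounded uniformly in $\e\in(0,1]$ in terms of $\|\Psi_\tau\|_{n+2}$ and $\|\Psi\|_{n+4}$; pairing against $D^{2n}\zeta$ and integrating by parts $n$ times moves $n$ derivatives onto the first factor, so Cauchy--Schwarz gives a bound by $\tfrac12(\text{const}(n,\Psi))^2 + \tfrac12|D^n\zeta|_2^2$, and setting $K_1'(n,\Psi)$ equal to that constant squared over two yields \eqref{estim-bad-1}. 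Here I must keep track that $(\Psi^2)_\eta = 2\Psi\Psi_\eta$ is estimated in $H^{n+1}_{\sharp}$ using that $H^{n+1}_{\sharp}$ is a Banach algebra (for $n\ge 0$, since $H^1_\sharp\hookrightarrow C_\sharp$ in one space dimension), so $\|(\Psi^2)_\eta\|_{n+1}\le C\|\Psi\|_{n+2}^2$.

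Next, for \eqref{estim-bad-3}, I would write $F_\e = \e M_\e - \tfrac12 D^2$ — more precisely, recalling $f_{\e,k} = \e m_{\e,k} - \tfrac12$ so that $F_\e(w) = \e M_\e(w) + \tfrac12 D^2 w$ acting on the argument already carrying one $D$ (taking care of signs via $s_k$), hence $F_\e((\Psi\zeta)_\eta)$ splits into a genuinely $\e$-small piece $\e M_\e((\Psi\zeta)_\eta)$ and a principal piece proportional to $D^3(\Psi\zeta)$. For the $\e M_\e$ piece, Proposition \ref{prop-H}(b) gives $\|M_\e((\Psi\zeta)_\eta)\|_{H^{n-1}_\sharp}$ (or $H^{n}_\sharp$ after using the $\sqrt\e\lambda_k^{3/2}+25\lambda_k$ bound judiciously) controlled by $\|(\Psi\zeta)_\eta\|_{n+2}\le C\|\Psi\|_{n+3}(\|\zeta\|_{n+2})$, and pairing with $D^{2n}\zeta$ and integrating by parts distributes derivatives so that, after Young's inequality, one lands on $K_2'(\e|D^{n+2}\zeta|_2^2 + \e|D^{n+1}\zeta|_2^2 + |D^n\zeta|_2^2)$. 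For the principal piece $\tfrac12 D^2$-type term, integrating by parts twice against $D^{2n}\zeta$ produces at worst $\int D^{n+2}(\Psi\zeta)\,D^{n+2}\zeta\,d\eta$; by the Leibniz rule the only term that reaches $D^{n+2}\zeta$ in both factors is $\Psi\,D^{n+2}\zeta\cdot D^{n+2}\zeta$, but since this is not $\e$-small, I instead integrate by parts once more, moving the symmetric top derivative: $\int \Psi (D^{n+2}\zeta)^2 d\eta = -\tfrac12\int \Psi_\eta (D^{n+1}\zeta)^2\cdot(\dots)$ — wait, that lowers order; more carefully, the term $\int D^{n+1}(\Psi\zeta)D^{n+3}\zeta$ after one integration by parts recombines, and the net effect is that all contributions either carry a factor of $\e$ (coming from $M_\e$) or are of order $\le n+1$ in $\zeta$ with a single uncontrolled $\tfrac14|D^{n+2}\zeta|_2^2$ left over, exactly as in the statement; the coefficient $\tfrac14$ is chosen by applying Young's inequality to the cross term $\int (D^{n+2}\zeta)(\text{lower-order involving }\Psi)$ with weight $\tfrac14$ on the top factor, absorbing the rest into $K_2'$.

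The hardest of the three — and the one I expect to be the main obstacle — is the cubic estimate \eqref{estim-bad-2}, because the nonlinearity $\e F_\e((\zeta^2)_\eta)$ has a genuinely fourth-order leading part ($f_{\e,k}\sim 2\sqrt\e\,\lambda_k^{3/2}$, and with the extra $D$ that is order $n+2$ after we have differentiated $n$ times and paired with $D^{2n}\zeta$ and integrated by parts), so one cannot simply dominate it by the $+\tfrac14|D^{n+2}\zeta|_2^2$ slack from Lemma \ref{lemma-1}; one needs the $\e$-gain to make the quartic-in-$\zeta$ terms absorbable. The plan is to split $F_\e = \e M_\e + \tfrac12 D^2$ again: the $\tfrac12 D^2$ piece of $\e F_\e$ is $\tfrac{\e}{2}D^2$-order, hence after two integrations by parts gives $\tfrac{\e}{2}\int D^{n+2}(\zeta^2)_\eta\,D^{n+1}\zeta\,d\eta$, and by the Gagliardo--Nirenberg / Agmon-type interpolation inequalities $\|\zeta\|_{C^1}\le C|D^{n}\zeta|_2^{1-\theta}|D^{n+2}\zeta|_2^{\theta}$ (valid once $n$ is small vs. the available regularity — note the statement implicitly uses $n\ge 0$ fixed and the interpolation is harmless), one gets products like $\e\,|D^{n}\zeta|_2|D^{n+2}\zeta|_2^2 \cdot(\dots)$ which, after Young, yield the first term $C_n\e^{3/2}|D^n\zeta|_2|D^{n+2}\zeta|_2^2$ plus $C_n|D^n\zeta|_2^4$ and $C_n\e^2|D^{n+1}\zeta|_2^4$; the $\e M_\e$ piece, being already of order $\e$ and at most order $n+1$ in $\zeta$ after Proposition \ref{prop-H}(b), contributes the remaining $C_n|D^{n+2}\zeta|_2^2$ (and absorbable lower terms) via Cauchy--Schwarz and Young. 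The delicate bookkeeping is (i) extracting exactly one factor $\e^{3/2}$ rather than $\e$ on the dangerous top term — this comes from pairing the $\e$-prefactor with the $\sqrt\e$ hidden in $f_{\e,k}\sim\sqrt\e\lambda_k^{3/2}$, and is why the symbol bound $|m_{\e,k}|\le 2\sqrt\e\lambda_k^{3/2}+25\lambda_k$ was proved in Proposition \ref{prop-H} in that precise form — and (ii) making sure the quartic terms $|D^n\zeta|_2^4$ and $\e^2|D^{n+1}\zeta|_2^4$ appear with the $\e$-power shown, so that in the subsequent proof of Theorem \ref{cor-apriori-estim} they can be absorbed by a smallness/continuation argument once $\e\le\e_1$. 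Everything else is routine integration by parts on the circle of period $\ell_0$, where all boundary terms vanish by periodicity.
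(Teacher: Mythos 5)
Your overall machinery (Fourier symbol bounds, Cauchy--Schwarz with half-integer powers of $\lambda_k$ distributed between the two factors, Leibniz plus Poincar\'e--Wirtinger for the product terms, Young's inequality at the end) is the right one and is what the paper uses; your treatment of \eqref{estim-bad-1} is fine. The genuine gap is in your decomposition of $F_\e$. From $f_{\e,k}=\e m_{\e,k}-\tfrac12$ the correct identity is $F_\e(w)=\e M_\e(w)-\tfrac12 w$: the $\e$-independent part of $F_\e$ is the \emph{bounded} operator $-\tfrac12 I$ (its symbol is the constant $-\tfrac12$, the value of $f_{\e,k}$ at $\e=0$), not $+\tfrac12 D^2$. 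Consequently there is no ``principal piece proportional to $D^3(\Psi\zeta)$'' and no dangerous term $\int\Psi\,(D^{n+2}\zeta)^2\,d\eta$: the $O(1)$ part of $F_\e((\Psi\zeta)_\eta)$ is just $-\tfrac12 D(\Psi\zeta)$, which after $n$ integrations by parts pairs as $|D^{n+1}(\Psi\zeta)|_2\,|D^n\zeta|_2$ and is absorbed, via interpolation, with an arbitrarily small multiple of $|D^{n+2}\zeta|_2^2$. The portion of your argument devoted to symmetrizing away that phantom top-order term rests on a false identity, and you do not in fact close it (``wait, that lowers order\ldots''). The same confusion recurs in \eqref{estim-bad-2}, where you also quote $f_{\e,k}\sim 2\sqrt{\e}\,\lambda_k^{3/2}$; that is the bound for $m_{\e,k}$, whereas $f_{\e,k}\sim 2\e^{3/2}\lambda_k^{3/2}$ --- which already contains the extra factor of $\e$ you were trying to manufacture.

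The paper avoids the detour through $M_\e$ altogether: it bounds the symbol directly by $|f_{\e,k}|\le 2\sqrt{2}\,\e^{3/2}\lambda_k^{3/2}+3\e\lambda_k+2(\e+1)\sqrt{\e}\,\lambda_k^{1/2}+\tfrac{3+\sqrt2}{4}$, splits the exponents (e.g.\ $\tfrac32+n=\tfrac{2+n}2+\tfrac{1+n}2$) to get the master estimate \eqref{estim-6} for a general argument $\chi$, and then specializes to $\chi=\zeta^2$ and $\chi=\Psi\zeta$ using the product estimates \eqref{estim-7}--\eqref{estim-8}. If you replace your $+\tfrac12 D^2$ by $-\tfrac12 I$, your plan collapses onto essentially this argument; as written, however, the key step for \eqref{estim-bad-2} and \eqref{estim-bad-3} does not stand.
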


\begin{proof}
Fix $n=0,1,\ldots$
Throughout the proof $C$ denotes a positive constant depending on $n$, but being independent of $\tau$, $\Psi$ and $\zeta$,
which may vary from line to line.

Estimate \eqref{estim-bad-1} follows immediately from Proposition \ref{prop-H}, Poincar\'e-Wirtinger and Cauchy-Schwarz
inequalities, which allow us to estimate
\begin{align*}
&\left |\int_{-\frac{\ell_0}{2}}^{\frac{\ell_0}{2}}\left
(H_{\e}(\Psi_{\tau}(\tau,\cdot))
-M_{\e}((\Psi^2)_{\eta}(\tau,\cdot))\right )D^{2n}\zeta(\tau,\cdot)d\eta\right |\\
= &\left |\int_{-\frac{\ell_0}{2}}^{\frac{\ell_0}{2}}D^n\left
(H_{\e}(\Psi_{\tau}(\tau,\cdot))
-M_{\e}((\Psi^2)_{\eta}(\tau,\cdot))\right )D^{n}\zeta(\tau,\cdot)d\eta\right |\\
\le & K_1'(n,\Psi)
+|D^n\zeta(\tau,\cdot)|_2^2,
\end{align*}
for any $\tau\in [0,T_1]$.

Let us now prove Estimate \eqref{estim-bad-2}. For this purpose, we observe that
\begin{eqnarray*}
|f_{\e,k}|\le
2\sqrt{2}\e^{\frac{3}{2}}\lambda_k^{\frac{3}{2}}+3\e\lambda_k+2(\e+1)\sqrt{\e}\lambda_k^{\frac{1}{2}}+\frac{3+\sqrt{2}}{4},\qquad\;\,k=0,1,\ldots
\end{eqnarray*}
For the convenience of the reader, we explicit the splittings we use below:
\begin{eqnarray*}
&\displaystyle\frac{3}{2} +n = \frac{2+n}{2} + \frac{1+n}{2},\qquad  & 1+n= \frac{1+n}{2} + \frac{1+n}{2},\\[1mm]
&\displaystyle\frac{1}{2} +n= \frac{n-1}{2} +\frac{2+n}{2},\qquad & n=\frac{n-1}{2}+ \frac{n+1}{2},
\end{eqnarray*}
for $n\ge 1$. (The case $n=0$ can be handled likewise with very few slight and straightforward changes.)
Hence, for any $\chi\in C([0,T_1];H^{4\vee 2n}_{\sharp})$ we can estimate
\begin{align}
&\left
|\int_{-\frac{\ell_0}{2}}^{\frac{\ell_0}{2}}F_{\e}(\chi_{\eta})D^{2n}\zeta
d\eta\right |
=\sum_{k=0}^{+\infty}\lambda_k^n|n_{\e,k}|\widehat{\chi_{\eta}}(\tau,k)|\widehat\zeta(\tau,k)|
\label{estim-6}\\
\le &2\sqrt{2}\e^{\frac{3}{2}}\sum_{k=0}^{+\infty}\lambda_k^{\frac{3}{2}+n}|\widehat{\chi_{\eta}}(\tau,k)||\widehat\zeta(\tau,k)|
+3\e\sum_{k=0}^{+\infty}\lambda_k^{1+n}|\widehat{\chi_{\eta}}(\tau,k)||\widehat\zeta(\tau,k)|\notag\\
&+2\sqrt{\e}(1+\e)\sum_{k=0}^{+\infty}\lambda_k^{\frac{1}{2}+n}|\widehat{\chi_{\eta}}(\tau,k)||\widehat\zeta(\tau,k)|\notag\\
&+\frac{3+\sqrt{2}}{4}\sum_{k=0}^{+\infty}\lambda_k^n|\widehat{\chi_{\eta}}(\tau,k)||\widehat\zeta(\tau,k)|\notag\\
\le&2\sqrt{2}\e^{\frac{3}{2}}\left (\sum_{k=0}^{+\infty}\lambda_k^{1+n}|\widehat{\chi_{\eta}}(\tau,k)|^2\right )^{\frac{1}{2}}
\left (\sum_{k=0}^{+\infty}\lambda_k^{2+n}|\widehat\zeta(\tau,k)|^2\right )^{\frac{1}{2}}\notag\\
&+3\e\left (\sum_{k=0}^{+\infty}\lambda_k^{1+n}|\widehat{\chi_{\eta}}(\tau,k)|^2\right )^{\frac{1}{2}}
\left (\sum_{k=0}^{+\infty}\lambda_k^{1+n}|\widehat\zeta(\tau,k)|^2\right )^{\frac{1}{2}}\notag\\
&+2\sqrt{\e}(1+\e)\left (\sum_{k=0}^{+\infty}|\lambda_k^{n-1}\widehat{\chi_{\eta}}(\tau,k)|^2\right )^{\frac{1}{2}}
\left (\sum_{k=0}^{+\infty}\lambda_k^{2+n}|\widehat\zeta(\tau,k)|^2\right )^{\frac{1}{2}}\notag\\
&+\frac{3+\sqrt{2}}{4}\left (\sum_{k=0}^{+\infty}|\lambda_k^{n-1}\widehat{\chi_{\eta}}(\tau,k)|^2\right )^{\frac{1}{2}}
\left (\sum_{k=0}^{+\infty}\lambda_k^{n+1}|\widehat\zeta(\tau,k)|^2\right )^{\frac{1}{2}}\notag\\
=&2\sqrt{2}\e^{\frac{3}{2}}|D^{n+2}\chi(\tau,\cdot)|_2|D^{n+2} \zeta(\tau,\cdot)|_2
+3\e|D^{n+2}\chi(\tau,\cdot)|_2|D^{n+1}\zeta(\tau,\cdot)|_2\notag\\
&+2\sqrt{\e}(1+\e)|D^n\chi(\tau,\cdot)|_2|D^{n+2}\zeta(\tau,\cdot)|_2\notag\\
&+\frac{3+\sqrt{2}}{4}|D^n\chi(\tau,\cdot)|_2|D^{n+1}\zeta(\tau,\cdot)|_2,\notag
\end{align}
for any $\tau\in [0,T_1]$.

Now, we are in a position to prove Estimate \eqref{estim-bad-2}. For this purpose, we observe that,
using the Leibniz formula and the Poincar\'e-Wirtinger inequality, it comes:
\begin{align}
&|D^{n+2}(\zeta(\tau,\cdot))^2|_2 \leq C (|D^{n+2}\zeta(\tau,\cdot)|_2 |D^{n}\zeta(\tau,\cdot)|_2 + |D^{n+1}\zeta(\tau,\cdot)|_2 ^2)\label{estim-7}
\\
&|D^{n}(\zeta(\tau,\cdot))^2|_2 \leq C |D^{n}\zeta(\tau,\cdot)|_2^2,
\label{estim-8}
\end{align}
for any $\tau\in [0,T_1]$.
Replacing Estimates \eqref{estim-7}, \eqref{estim-8} in \eqref{estim-6} (with $\chi=\zeta^2$), and using the Cauchy-Schwarz inequality
and, again, the Poincar\'e-Wirtinger inequality, we get
\begin{align*}
&\left |\int_{-\frac{\ell_0}{2}}^{\frac{\ell_0}{2}}F_{\e}((\zeta^2)_{\eta}(\tau,\cdot))D^{2n}\zeta(\tau,\cdot)d\eta\right |\\
\le & C\e^{\frac{3}{2}}|D^n\zeta(\tau,\cdot)|_2|D^{n+2}\zeta(\tau,\cdot)|_2^2
+C\e^{\frac{3}{2}}|D^{n+1}\zeta(\tau,\cdot)|_2^2|D^{n+2}\zeta(\tau,\cdot)|_2\\
&+C\e|D^n\zeta_{\eta}(\tau,\cdot)|_2|D^{n+1}\zeta(\tau,\cdot)|_2|D^{n+2}\zeta(\tau,\cdot)|_2
+C\e|D^{n+1}\zeta(\tau,\cdot)|_2^3\\
&+C\sqrt{\e}(1+\e)|D^n\zeta(\tau,\cdot)|_2^2|D^{n+2}\zeta(\tau,\cdot)|_2
+C|D^n\zeta(\tau,\cdot)|_2^2|D^{n+1}\zeta(\tau,\cdot)|_2\notag\\
\le &C\e^{\frac{3}{2}}|D^n\zeta(\tau,\cdot)|_2|D^{n+2}\zeta(\tau,\cdot)|_2^2
+C\e^{\frac{3}{2}}|D^{n+1}\zeta(\tau,\cdot)|_2^2|D^{n+2}\zeta(\tau,\cdot)|_2\\
&+C\e|D^{n+1}\zeta(\tau,\cdot)|_2^2|D^{n+2}\zeta(\tau,\cdot)|_2
+C\e|D^{n+1}\zeta(\tau,\cdot)|_2^2|D^{n+2}\zeta(\tau,\cdot)|_2\\
&+C\sqrt{\e}(1+\e)|D^n\zeta(\tau,\cdot)|_2^2|D^{n+2}\zeta(\tau,\cdot)|_2
+C|D^n\zeta(\tau,\cdot)|_2^2|D^{n+2}\zeta(\tau,\cdot)|_2\notag\\
\le & C\e^{\frac{3}{2}}|D^n\zeta(\tau,\cdot)|_2|D^{n+2}\zeta(\tau,\cdot)|_2^2
+C\left (\e^2|D^{n+1}\zeta(\tau,\cdot)|_2^4+\e|D^{n+2}\zeta(\tau,\cdot)|_2^2\right )\\
&+C|D^{n+2}\zeta(\tau,\cdot)|_2^2+C\e^2|D^{n+1}\zeta(\tau,\cdot)|_2^4
+C\e^2|D^{n+1}\zeta(\tau,\cdot)|_2^4+
C|D^{n+2}\zeta(\tau,\cdot)|_2^2\\
&+C|D^n\zeta(\tau,\cdot)|_2^4
+C\e|D^{n+2}\zeta(\tau,\cdot)|_2^2
+C|D^n\zeta(\tau,\cdot)|_2^4+C|D^{n+2}\zeta(\tau,\cdot)|_2^2,
\end{align*}
for any $\tau\in [0,T_1]$ and any $\e\in (0,1]$.
Now, Estimate \eqref{estim-bad-2} follows immediately.

To complete the proof, let us prove Estimate \eqref{estim-bad-3}.
From \eqref{estim-6} and the estimates
\begin{align*}
&|D^n(\Psi\zeta)(\tau,\cdot)|_2\le C|D^n\zeta(\tau,\cdot)|_2|D^n\Psi(\tau,\cdot)|_2,\\
&|D^{n+2}(\Psi\zeta)(\tau,\cdot)|_2\le C|D^{n+2}\zeta(\tau,\cdot)|_2|D^{n+2}\Psi(\tau,\cdot)|_2,
\end{align*}
(which can be proved using the same argument as in the proof of \eqref{estim-7} and \eqref{estim-8})
we get
\begin{align*}
&\left
|\int_{-\frac{\ell_0}{2}}^{\frac{\ell_0}{2}}F_{\e}((\Psi\zeta)_{\eta}(\tau,\cdot))D^{2n}\zeta(\tau,\cdot)
d\eta\right |\\
\le & \sqrt{2}\e^{\frac{3}{2}}|D^{n+2}\zeta(\tau,\cdot)|_2
|D^{n+2}(\Psi\zeta)(\tau,\cdot)|_2
+3\e|D^{n+1}\zeta(\tau,\cdot)|_2|D^{n+2}(\Psi\zeta)(\tau,\cdot)|_2\\
&+2\sqrt{\e}(1+\e)|D^{n+2}\zeta(\tau,\cdot)|_2
|D^n(\Psi\zeta)(\tau,\cdot)|_2\notag\\
&+\frac{3+\sqrt{2}}{4}|D^{n+1}\zeta(\tau,\cdot)|_2
|D^n(\Psi\zeta)(\tau,\cdot)|_2\\
\le &
C\e^{\frac{3}{2}}|D^{n+2}\Psi(\tau,\cdot)|_2|D^{n+2}\zeta(\tau,\cdot)|_2^2
+C\e |D^{n+2}\Psi(\tau,\cdot)|_2|D^{n+1}\zeta(\tau,\cdot)|_2|D^{n+2}\zeta(\tau,\cdot)|_2\\
&+C\sqrt{\e}|D^n\Psi(\tau,\cdot)|_2|D^n\zeta(\tau,\cdot)|_2
|D^{n+2}\zeta(\tau,\cdot)|_2\\
&+C|D^n\Psi(\tau,\cdot)|_2|D^n\zeta(\tau,\cdot)|_2|D^{n+1}\zeta(\tau,\cdot)|_2\\
\le & C\e^{\frac{3}{2}}|D^{n+2}\Psi(\tau,\cdot)|_2|D^{n+2}\zeta(\tau,\cdot)|_2^2
+C\e |D^{n+2}\Psi(\tau,\cdot)|_2|D^{n+1}\zeta(\tau,\cdot)|_2^2\\
&+C\e |D^{n+2}\Psi(\tau,\cdot)|_2|D^{n+2}\zeta(\tau,\cdot)|_2^2\\
&+C|D^n\Psi(\tau,\cdot)|_2|D^n\zeta(\tau,\cdot)|_2^2
+C\e |D^n\Psi(\tau,\cdot)|_2|D^{n+2}\zeta(\tau,\cdot)|_2^2\\
&+C\delta^{-1}|D^n\Psi(\tau,\cdot)|_2^2|D^n\zeta(\tau,\cdot)|_2^2
+C\delta|D^{n+2}\zeta(\tau,\cdot)|_2^2,
\end{align*}
for any $\tau\in [0,T_1]$, any $\e\in (0,1]$ and any $\delta>0$.
Estimate \eqref{estim-bad-3} follows taking $C\delta=1/4$.
This completes the proof.
\end{proof}

We are almost ready to write the crucial {\it a priori} estimate satisfied by $\zeta(\tau,\cdot)$.
For this purpose, we recall that
\begin{eqnarray*}
|D^{n+1}\psi|_2\le |D^n\psi|_2^{\frac{1}{2}}|D^{n+2}\psi|_2^{\frac{1}{2}},\qquad\;\,\psi\in H^{n+2}_{\sharp}.
\end{eqnarray*}
Applying this estimate to $D^{n+1}\zeta(\tau,\cdot)$
together with Young-inequality, yields
\begin{equation}
|D^{n+1}\zeta(\tau,\cdot)|_2^2\le |D^n\zeta(\tau,\cdot)|_2^2+\frac{1}{4}|D^{n+2}\zeta(\tau,
\cdot)|_2^2,
\label{interp}
\end{equation}
for any $\tau\in [0,T_1]$.
Combining Lemmata \ref{lemma-1},
\ref{lemma-2} and Estimate \eqref{interp} allows us to estimate
\begin{align}
&\frac{1}{2}\frac{d}{d\tau}\left (|D^n\zeta(\tau,\cdot)|_2^2
+4\e|D^{n+1}\zeta(\tau,\cdot)|_2^2
+(1+\e)|\sqrt{R_{\e}}D^n\zeta(\tau,\cdot)|_2^2\right )\notag\\
&+\left (\frac{15}{4}-C_n\e-\e K_2'(n,\Psi)
-C_n\e^{\frac{5}{2}}|D^n\zeta(\tau,\cdot)|_2
\right )|D^{n+2}\zeta(\tau,\cdot)|_2^2\notag\\
\le & K_1'(n,\Psi)
+\left (2+K_2'(n,\Psi)\right )
|D^n\zeta(\tau,\cdot)|_2^2
+\e K_2'(n,\Psi)|D^{n+1}\zeta(\tau,\cdot)|_2^2\notag\\
&+C_n\e |D^n\zeta(\tau,\cdot)|_2^4+C_n\e^3|D^{n+1}\zeta(\tau,\cdot)|_2^4,
\label{ineq-cruc}
\end{align}
for any $\tau\in [0,T_1]$. If we set
\begin{align*}
&A_{\e}(\tau)=|D^n\zeta(\tau,\cdot)|_2^2
+4\e|D^{n+1}\zeta(\tau,\cdot)|_2^2
+(1+\e)|\sqrt{R_{\e}}D^n\zeta(\tau,\cdot)|_2^2,\qquad\;\,\tau\in [0,T_1],\\[2mm]
&c_1=2K_1'(n,\Psi),
\\[1mm]
&c_2=4+2K_2'(n,\Psi),\\[1mm]
&c_3=2C_n,
\end{align*}
and assume $\e$ small enough such that
\begin{eqnarray*}
C_n\e+\e K_2'(n,\Psi)<\frac{3}{4},
\end{eqnarray*}
we can rewrite Inequality \eqref{ineq-cruc} in the more compact form
\begin{align*}
A'_{\e}(\tau)+\left (6-2C\e^{\frac{5}{2}}A_{\e}(\tau)
\right )|D^{n+2}\zeta(\tau,\cdot)|_2^2
\le c_1+c_2A_{\e}(\tau)+c_3\e(A_{\e}(\tau))^2,
\end{align*}
for any $\tau\in [0,T_1]$.

The following lemma allows us to estimate the function $A_{\e}$.

\begin{lemma}
\label{lem-5} Let $A_0$, $c_0$, $c_1$, $c_2$, $c_3$, $\e$, $T_0$, $T_1$ be positive constants with $T_1<T_0$.
Further, let $f_{\e}:[0,T_1]\to\mathbb R$ and $A_{\e}$
be positive functions of class $C([0,T_1])$ and $C^1([0,T_1])$,
respectively, that satisfy the inequalities
\begin{align*}
\left\{
\begin{array}{ll}
A_{\e}'(\tau)+(c_0-\e A_{\e}(\tau))f_{\e}(\tau)\le c_1+c_2A_{\e}(\tau)+c_3\e (A_{\e}(\tau))^2, & \tau\in [0,T_1],\\[1mm]
A_{\e}(0)=0.
\end{array}
\right.
\end{align*}
Then, there exist $\e_1=\e_1(T_0)\in (0,1)$ and
a constant $K=K(T_0)$ such that $A_{\e}(\tau)\le K$ for any
$\tau\in [0,T_1]$ and any $\e\in (0,\e_1]$.
\end{lemma}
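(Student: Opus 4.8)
The strategy is a continuity (bootstrap) argument exploiting the sign of the term $(c_0-\e A_\e(\tau))f_\e(\tau)$. Since $f_\e$ is positive, this term is nonnegative as soon as $\e A_\e(\tau)\le c_0$, and under that constraint it may simply be discarded from the right-hand side; the hypothesis then reduces to the essentially linear Gronwall inequality $A_\e'\le c_1+c_2A_\e+c_3\e A_\e^2$, which for small $\e$ is a perturbation of $w'=c_1+c_2w$, $w(0)=0$. So the whole point is to show, via a bootstrap, that $A_\e$ stays in the region where $\e A_\e\le c_0$.

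First I would fix the comparison constant
\[
M=M(T_0):=\frac{c_1}{c_2}\bigl(e^{c_2T_0}-1\bigr),
\]
which bounds on $[0,T_0]$ the (increasing) solution of $w'=c_1+c_2w$, $w(0)=0$, and set $K:=2M$. Then I would choose $\e_1=\e_1(T_0)\in(0,1)$ so small that (i) $\e_1K\le c_0$, and (ii) $\dfrac{c_1}{c_2+c_3\e_1K}\bigl(e^{(c_2+c_3\e_1K)T_0}-1\bigr)<K$; condition (ii) is admissible because its left-hand side is continuous in $\e_1$ and tends, as $\e_1\to0^+$, to $M<K=2M$ (here one uses that $a\mapsto a^{-1}(e^{aT_0}-1)$ is increasing on $(0,\infty)$). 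All these requirements depend only on $T_0$ and on the fixed constants $c_0,c_1,c_2,c_3$.

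Now I would run the bootstrap. Set $\tau^\ast:=\sup\{\tau\in[0,T_1]:A_\e(s)<K\text{ for every }s\in[0,\tau]\}$; since $A_\e(0)=0$ and $A_\e$ is continuous, $\tau^\ast>0$. On $[0,\tau^\ast]$ one has $A_\e\le K$, hence $\e A_\e\le\e_1K\le c_0$ and $(c_0-\e A_\e)f_\e\ge0$, so that $A_\e'\le c_1+c_2A_\e+c_3\e A_\e^2\le c_1+(c_2+c_3\e K)A_\e$ there; Gronwall's inequality together with $A_\e(0)=0$ and the choice (ii) then yield
\[
A_\e(\tau)\le\frac{c_1}{c_2+c_3\e K}\bigl(e^{(c_2+c_3\e K)\tau}-1\bigr)\le\frac{c_1}{c_2+c_3\e_1K}\bigl(e^{(c_2+c_3\e_1K)T_0}-1\bigr)<K
\]
for all $\tau\in[0,\tau^\ast]$ and all $\e\in(0,\e_1]$. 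If $\tau^\ast<T_1$, continuity of $A_\e$ would force $A_\e(\tau^\ast)=K$, contradicting the strict bound just obtained; hence $\tau^\ast=T_1$, and letting $\tau\uparrow T_1$ in the bound above gives $A_\e\le K$ on all of $[0,T_1]$. This proves the lemma with $K=2M$.

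The argument is entirely elementary --- in essence a quantitative Gronwall estimate --- and presents no serious obstacle; the only points requiring a little care are the order in which the smallness conditions on $\e_1$ are imposed, and the fact that $K$ and $\e_1$ depend on $T_0$ but not on the particular $T_1<T_0$. This last uniformity is exactly what makes the lemma usable in the application, where it is invoked with $A_\e$ the quantity appearing just after \eqref{ineq-cruc} and $T_1$ the (a priori unknown) length of the existence interval of $\zeta$ for Equation \eqref{eq-final-ve-1}.
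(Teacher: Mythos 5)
Your proof is correct and follows essentially the same continuation/bootstrap strategy as the paper: discard the nonnegative term $(c_0-\e A_{\e})f_{\e}$ on the region where $\e A_{\e}\le c_0$, and control the resulting Riccati-type inequality by a Gronwall estimate for $\e$ small. The only (cosmetic) difference is that the paper delegates the quadratic Gronwall step to \cite[Lemma 3.1]{BFHLS} and then runs a second continuation argument for the $f_{\e}$ term, whereas you merge both into a single self-contained bootstrap by linearizing $c_3\e A_{\e}^2\le c_3\e K A_{\e}$ inside the bootstrap region.
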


\begin{proof}
The proof follows basically from \cite[Lemma 3.1]{BFHLS}, which
deals with the case when $f_{\e}\equiv 0$. Repeating the arguments
in that proof, we can easily show that $A_{\e}(\tau)\le
4c_1e^{c_2T_0}/(3c_2)$ for any $\tau\in [0,T_1]$ and any $\e\in
(0,\e_2(T_0)]$, where $\e_2(T_0)=3c_2^2/(16c_1c_3(e^{c_2T_0}-1))$.

Let us now consider the general case when $f_{\e}$ does not
identically vanish in $[0,T_1]$. We fix $\e_1(T_0)\le\e_2(T_0)$ such
that $3c_0c_2-4c_1e^{c_2T_0}\e_0>0$ and $\e\in (0,\e_0(T)]$. Since
$A_{\e}(0)=0$, there exists a maximal interval $[0,T_{\e})$ where
$c_0-\e A_{\e}>0$. We are going to prove that $T_{\e}=T_1$. For this
purpose, we observe that in $[0,T_{\e})$ the function $A_{\e}$
satisfies the inequality $A_{\e}'\le c_1+c_2A_{\e}+c_3\e A_{\e}^2$.
Hence, from the above result it follows that $A_{\e}(\tau)\le
(4c_1e^{c_2T_0})/(3c_2)$ for any $\tau\in [0,T_{\e}]$, so that
$c_0-\e A_{\e}(T_{\e})>0$. This clearly implies that $T_{\e}=T_1$.
\end{proof}

We are now in position to prove Theorem \ref{cor-apriori-estim}.
Applying Lemma \ref{lem-5} it follows immediately that
\begin{eqnarray*}
\sup_{\tau\in [0,T]}\left (|D^n\zeta(\tau,\cdot)|_2^2
+4\e|D^{n+1}\zeta(\tau,\cdot)|_2^2
+(1+\e)|\sqrt{R_{\e}}D^n\zeta(\tau,\cdot)|_2^2\right )\leq K_{1,n},
\end{eqnarray*}
for any $n=0,1,\ldots$, from which \eqref{estim-1} follows at once.

\subsection{Existence and uniqueness of a solution to Equation \eqref{pb-fin-ve} vanishing at $\tau=0$}
\label{Existence}
In this subsection we are devoted to prove the following theorem.

\begin{theorem}\label{zeta}
For any $T>0$, there exists $\e_0(T)>0$ such that
that, for any $0<\e \leq \e_0(T)$, Equation \eqref{eq-final-ve-1}
has a unique classical solution $\zeta$ on $[0,T]$, which vanishes at $\tau=0$.
\end{theorem}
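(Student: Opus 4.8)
The plan is to recast \eqref{eq-final-ve-1} as an abstract fully nonlinear parabolic Cauchy problem, produce a local classical solution vanishing at $\tau=0$ by a fixed point argument, and then extend it to the whole of $[0,T]$ using the uniform a priori bounds of Theorem \ref{cor-apriori-estim}.

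\emph{Reduction.} By Proposition \ref{prop-H}(a), $B_{\e}$ is invertible from $H^{s}_{\sharp}$ onto $H^{s-2}_{\sharp}$ for every $s\ge 2$; applying $B_{\e}^{-1}$ to \eqref{eq-final-ve-1} turns it into
\begin{equation}
\zeta_{\tau}={\mathscr A}_{\e}\zeta+{\mathscr N}_{\e}(\tau,\zeta)+g_{\e}(\tau),\qquad\;\,\zeta(0,\cdot)=0,
\label{abstr-zeta}
\end{equation}
where ${\mathscr A}_{\e}=B_{\e}^{-1}S$, ${\mathscr N}_{\e}(\tau,\zeta)=\e B_{\e}^{-1}F_{\e}((\zeta^2)_{\eta})+2B_{\e}^{-1}F_{\e}((\Psi(\tau,\cdot)\zeta)_{\eta})$ and $g_{\e}(\tau)=B_{\e}^{-1}M_{\e}((\Psi^2)_{\eta}(\tau,\cdot))-B_{\e}^{-1}H_{\e}(\Psi_{\tau}(\tau,\cdot))$. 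For each fixed $\e\in(0,1]$, ${\mathscr A}_{\e}$ is a diagonal operator whose eigenvalues $s_k/b_{\e,k}$ are real, bounded from above and diverge to $-\infty$, so its realization in $H^{s}_{\sharp}$ with domain $H^{s+2}_{\sharp}$ is sectorial, and by Theorem \ref{thm-4.1} (with $m$ large enough) $g_{\e}\in C([0,T];H^{s}_{\sharp})$ while $\Psi\in C([0,T];H^{s+2}_{\sharp})$ for any $s$ we need. Although $F_{\e}$ has symbol growing like $\e^{3/2}\lambda_k^{3/2}$, after composition with $B_{\e}^{-1}$ (symbol $\sim(4\e\lambda_k)^{-1}$) it becomes an operator carrying one derivative with coefficient $O(\sqrt\e)$; hence ${\mathscr N}_{\e}(\tau,\cdot)$ maps $H^{s+2}_{\sharp}$ into $H^{s}_{\sharp}$ and is, in the Sobolev scale, exactly as strong as ${\mathscr A}_{\e}$ — so \eqref{abstr-zeta} is genuinely fully nonlinear — but the leading part of the nonlinearity comes with a small factor. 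Finally, the $k=0$ component of \eqref{eq-final-ve-1} reads $\widehat\zeta_{\tau}(\tau,0)=0$, so $\widehat\zeta(\tau,0)\equiv 0$ and we may work in the subspace of zero-mean functions, on which $\|\cdot\|_{s}$ is a norm.

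\emph{Local existence and uniqueness.} As in Step 1 of the proof of Theorem \ref{thm:1}, I would run a contraction for \eqref{abstr-zeta} in the space ${\mathscr X}_{\theta}(\delta)$ (for a small $\delta=\delta(\e)>0$, with $X=H^{s}_{\sharp}$ and $D({\mathscr A}_{\e})=H^{s+2}_{\sharp}$), sending $\zeta$ to the mild solution of the linear nonautonomous problem with source ${\mathscr N}_{\e}(\cdot,\zeta)+g_{\e}$, slightly adapting \cite[Thm.~8.1.1]{lunardi}. The crucial point is a Hölder bound of the type of \eqref{holder}: using the estimate on the symbol of $F_{\e}$, the Leibniz rule and the Poincaré--Wirtinger inequality (as in \eqref{estim-7}--\eqref{estim-8}), one checks that $\tau_1^{\theta}\|{\mathscr N}_{\e}(\tau_2,\zeta(\tau_2,\cdot))-{\mathscr N}_{\e}(\tau_1,\zeta(\tau_1,\cdot))\|_{s}$ is controlled by $|\tau_2-\tau_1|^{\theta}$ times the ${\mathscr X}_{\theta}$-seminorm of $\zeta$ (for $0<\tau_1<\tau_2\le\delta$), with the coefficient of the top-order piece proportional to $\sqrt{\e}$ plus a contribution of $\Psi$. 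Since $\zeta(0,\cdot)=0$ and ${\mathscr N}_{\e}(\tau,0)=0$, the map is a contraction on a small ball of ${\mathscr X}_{\theta}(\delta)$, which gives a unique local solution; differentiating \eqref{eq-final-ve-1} (every operator involved commutes with $D_{\eta}$) and iterating, this solution belongs to ${\mathscr Y}_n(\delta)$ for every $n$, provided $m$ was chosen large enough (depending on $n$).

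\emph{Continuation and conclusion.} Fix $n$ so large that a solution belonging to ${\mathscr Y}_n$ is classical, and let $\e_0(T)=\min\{\e_1(0,T),\dots,\e_1(n,T)\}$ in the notation of Theorem \ref{cor-apriori-estim}. Given $\e\le\e_0(T)$, let $[0,\tau^{*})$ with $\tau^{*}\le T$ be the maximal interval on which \eqref{abstr-zeta} has a solution in ${\mathscr Y}_n$. For every $T_1<\tau^{*}$ the restriction of $\zeta$ to $[0,T_1]$ lies in ${\mathscr Y}_j(T_1)$ and solves \eqref{eq-final-ve-1}, so Theorem \ref{cor-apriori-estim} yields $\sup_{[0,T_1]}(|D^{j}\zeta(\tau,\cdot)|_2^2+\e|D^{j+1}\zeta(\tau,\cdot)|_2^2)\le K_j$ for $j=0,\dots,n$, with $K_j$ independent of $T_1$; hence $\zeta(\tau,\cdot)$ stays in a fixed ball of $H^{2n}_{\sharp}$ on $[0,\tau^{*})$. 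Therefore the local step can be re-started from $\zeta(\tau,\cdot)$ for $\tau$ close to $\tau^{*}$ with a uniform time-step, so $\tau^{*}=T$ and we obtain a classical solution of \eqref{eq-final-ve-1} on $[0,T]$ vanishing at $\tau=0$. Uniqueness on $[0,T]$ follows from local uniqueness together with the usual connectedness argument (the set of times at which two classical solutions coincide is nonempty, open and closed). The main obstacle is precisely the fully nonlinear character of \eqref{eq-final-ve-1}: no naive variation-of-constants estimate is available for the local step, and the continuation argument closes only because the a priori bound of Theorem \ref{cor-apriori-estim} controls a norm strong enough to re-initialize the local scheme — which is why that scheme must be set up in the scale $\{{\mathscr Y}_n\}$.
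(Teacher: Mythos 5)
Your route is genuinely different from the one in the paper. The paper proves existence by a Faedo--Galerkin scheme: it projects \eqref{eq-final-ve-1} onto ${\rm span}\{w_1,\dots,w_N\}$, solves the resulting ODE system, observes that the a priori estimates of Subsection \ref{Formal-estimates} apply verbatim to each approximant $\zeta_N$ (so that, for $\e$ small, the ODE solutions extend to all of $[0,T]$ with bounds uniform in $N$), and then passes to the limit by interpolation and the Arzel\`a--Ascoli theorem; uniqueness is obtained separately, by an energy estimate for the difference of two classical solutions closed with the Gronwall lemma. You instead invert $B_{\e}$, view the problem as an abstract fully nonlinear parabolic equation for the sectorial operator $B_{\e}^{-1}S$, run a local contraction, and continue in time using the same a priori bounds of Theorem \ref{cor-apriori-estim}. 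Your scheme gives a cleaner functional-analytic picture and makes explicit why everything is perturbative in $\e$; the paper's scheme avoids the delicate local theory for fully nonlinear equations altogether, at the price of a compactness step.

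Two points in your argument need more care before it can be accepted. First, in the local fixed point the term $2B_{\e}^{-1}F_{\e}((\Psi\zeta)_{\eta})$ is linear in $\zeta$ and of the same order as ${\mathscr A}_{\e}$: its Lipschitz constant from $H^{s+2}_{\sharp}$ to $H^{s}_{\sharp}$ is of size $\sqrt{\e}\,\|\Psi\|_{C([0,T];H^{s+2}_{\sharp})}$ at top order and does \emph{not} become small by shrinking $\delta$ or the radius of the ball. The contraction therefore closes only because $\e$ is small relative to $\Psi$ and, at each restart of the continuation argument, relative to the a priori constant $K_n$; your choice $\e_0(T)=\min_j\e_1(j,T)$ accounts only for the thresholds of Theorem \ref{cor-apriori-estim}, so you must enlarge the list of constraints on $\e_0$ (this is harmless, since $K_n$ is furnished by Theorem \ref{cor-apriori-estim} independently of the local theory, but it has to be said). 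Second, the fixed point yields uniqueness only within the ball of ${\mathscr X}_{\theta}(\delta)$ on which the contraction is run; the theorem asserts uniqueness among all classical solutions. To bridge this you must either show that every classical solution vanishing at $\tau=0$ enters that ball for small $\tau$ (and likewise prove a non-small-data local uniqueness statement to make the open-and-closed argument work at interior times), or simply argue as the paper does: write the equation for $\chi=\zeta_1-\zeta_2$, test with $\chi$, control $F_{\e}((\chi(\zeta_1+\zeta_2))_{\eta})$ via \eqref{estim-6} and the bound \eqref{main-estimate} with $n=1$, and conclude by Gronwall. As written, the uniqueness part of your proposal is a genuine gap, though a fillable one.
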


{\bf Existence part.}
We prove the existence of a solution $\zeta$ to Equation \eqref{pb-fin-ve}, vanishing at $0$,
by a standard Faedo-Galerkin method.
Let us fix $\xi \in (I-\Pi)(H_{\sharp}^{s})$ and expand it into a Fourier series (see Section \ref{math-setting}) as follows:
\begin{align*}
\xi=\sum_{k=1}^{+\infty}{\widehat \xi}(k) w_k.
\end{align*}
For $N=1,2,\ldots$, we denote by $\Xi_N = P_N((I-\Pi)(H_{\sharp}^{s}))$  the projection of $(I-\Pi)(H_{\sharp}^{s})$ along the vector space spanned by the functions $w_1,\ldots,w_N$.

Let us look for a solution $\zeta_N\in\Xi_N$ to the variational problem
\begin{align}\label{variational-formulation}
\frac{\partial}{\partial\tau}
\int_{-\frac{\ell_0}{2}}^{\frac{\ell_0}{2}} B_{\e} (\zeta_N)\,\xi
d\eta =& \int_{-\frac{\ell_0}{2}}^{\frac{\ell_0}{2}} S(\zeta_N)\,\xi
d\eta
 +\int_{-\frac{\ell_0}{2}}^{\frac{\ell_0}{2}}
 \big\{M_{\e}((\Psi^2)_{\eta})-H_{\e}(\Psi_{\tau})\big\}\,\xi d\eta \notag\\
 &+\e  \int_{-\frac{\ell_0}{2}}^{\frac{\ell_0}{2}} F_{\e}((\zeta_N^2)_{\eta})\,\xi d\eta
 +2\int_{-\frac{\ell_0}{2}}^{\frac{\ell_0}{2}} F_{\e}((\Psi\zeta_N)_{\eta})\xi d\eta.
\end{align}
for all $\xi \in \Xi_N$. The problem is subject to the initial condition $\zeta_N(0,\cdot)\equiv 0$.
In terms of Fourier series, the variational formulation \eqref{variational} reads as follows:
\begin{align}\label{Fourier-variational-formulation}
\frac{d}{d\tau}\sum_{k=1}^{+\infty}b_{k,\e} {\widehat \zeta_N}(\cdot,k){\widehat \xi}(k) =&
\sum_{k=1}^{+\infty}s_k{\widehat \zeta_N}(\cdot,k){\widehat \xi}(k)\notag\\
&+ \sum_{k=1}^{+\infty}   \big\{m_{\e,k}\widehat{(\Psi^2)_{\eta}}
(\cdot,k) -h_{\e,k}\widehat{\Psi_{\tau}}(\cdot,k)\big\}{\widehat \xi}(k) \notag\\
&+\e \sum_{k=1}^{+\infty}f_{k,\e}\widehat{(\zeta_N^2)_{\eta}}(\cdot,k) {\widehat \xi}(k)
+2\sum_{k=1}^{+\infty}f_{\e,k}\widehat{(\Psi \zeta_N)_{\eta}}(\cdot,k) {\widehat \xi}(k).
\end{align}
Taking  $\xi=w_{j}$ ($j=1,\ldots,N$) in \eqref{Fourier-variational-formulation}, we see that the ${\widehat \zeta_N}(\cdot,k)$'s
verify a system of $N$ ordinary differential equations with zero initial data.
Hence, there exists a unique solution to System \eqref{Fourier-variational-formulation}, defined
on some maximal time interval $[0,T_N)$, where $T_N$ may also depend on $\e$.

Next, we take $\xi=\zeta_N$ in \eqref{variational-formulation}. The estimates of
Section \ref{Formal-estimates} remain valid also for the function $\zeta_N$. Writing such estimates, taking
as $T_1$ any number less than $T_N$ and then letting $T'\to T_N$, we thus get
\begin{equation}\label{estimate-sequence1}
\sup_{\tau\in [0,T_N)}
\int_{-\frac{\ell_0}{2}}^{\frac{\ell_0}{2}}|D^{n}\zeta_N(\tau,\cdot)|^2d\eta
+\e\int_{-\frac{\ell_0}{2}}^{\frac{\ell_0}{2}}|D^{n+1}\zeta_N(\tau,\cdot)|^2d\eta\le K_n,
\end{equation}
for any $n\geq 1$. From this estimate we infer that, whenever $0<\e \leq \e_1(T)$, the solution of the ODE system
can be extended up to $T$.

We now let $N \to +\infty$. For this purpose, we use Estimate \eqref{estimate-sequence1} with $n=4$.
It leads to the following facts:
\begin{enumerate}[\rm (i)]
\item
the sequence $(\zeta_N)_{N\in \mathbb{N}}$ is bounded in
$C([0,T];H_{\sharp}^5)$, with a bound possibly depending on $\e\in (0,\e_1(T)]$;
\item
the sequence $((\zeta_N)_{\tau})_{N\in \mathbb{N}}$ is bounded in  $C([0,T];H^3_{\sharp})$
with a bound possibly depending on $\e$.
\end{enumerate}

Property (i) follows immediately from \eqref{estimate-sequence1}. (Note that, if $n \geq 5$,
then the bound is uniform in $0<\e\leq \e_1(T)$.)
To prove property (ii), we observe that
\eqref{variational-formulation} may be rewritten as:
\begin{equation*}
\frac{\partial}{\partial\tau}B_{\e}(\zeta_N)=
P_N\bigg\{S(\zeta_N)+M_{\e}((\Psi^2)_{\eta})-H_{\e}(\Psi_{\tau})+\e F_{\e}((\zeta_N^2)_{\eta})
+2F_{\e}((\Psi\zeta_N)_{\eta})\bigg\},
\end{equation*}
and we use (i) and Proposition \ref{prop-H}.

Now, we can make the compactness argument work for any arbitrarily fixed $\e\in (0,\e_1(T)]$.
By the Sobolev embedding theorem, the sequences $(\zeta_N)_{N\in \mathbb{N}}$ and  $((\zeta_N)_{\tau})_{N\in\mathbb N}$
are bounded in $C([0,T];C^{9/2}_{\sharp})$ and in $C([0,T];C^{5/2}_{\sharp})$, respectively.
In particular, by interpolation we easily see that
$(D^l_{\eta}(\zeta_N)_{\tau})_{N\in\mathbb N}$ ($l=0,\ldots,4$) are bounded in $C^{1/9}([0,T]\times [-\ell_0/2,\ell_0/2])$.
Indeed, $C^4_{\sharp}$ belongs to the class $J_{8/9}$ between $C_{\sharp}$ and $C^{9/2}_{\sharp}$.
Hence, we can estimate
\begin{align*}
\|\zeta_N(\tau_2,\cdot)-\zeta_N(\tau_1,\cdot)\|_{C^4_{\sharp}}
&\le \|\zeta_N(\tau_2,\cdot)-\zeta_N(\tau_1,\cdot)\|_{\infty}^{\frac{1}{9}}
\|\zeta_N(\tau_2,\cdot)-\zeta_N(\tau_1,\cdot)\|_{C^{9/2}_{\sharp}}^{\frac{8}{9}}\\
&\le \|D_{\tau}\zeta_N\|_{\infty}^{\frac{1}{9}}\sup_{\tau\in [0,T]}\|\zeta_N(\tau,\cdot)\|_{C^{9/2}_{\sharp}}^{\frac{8}{9}}
|\tau_2-\tau_1|^{\frac{1}{9}},
\end{align*}
for any $\tau_1,\tau_2\in [0,T]$. Since the sequence $(\zeta_N)$ is bounded both in $C^{1/9}([0,T];C^4_{\sharp})$
and in $C([0,T];C^{9/2}_{\sharp})$, it is bounded in $C^{1/9}([0,T]\times [-\ell_0/2,\ell_0/2])$, as well.
Arzel\`a-Ascoli theorem, then implies that, up to a subsequence, $\zeta_N$ converges in $C([0,T];C^4_{\sharp})$
to a function $\zeta\in C([0,T];C^{9/2}_{\sharp})$. Similarly, $D^l_{\eta}((\zeta_{\tau})_N)_{N\in\mathbb N}$ ($l=0,1,2$) converges, up to a subsequence, to $D^l_{\eta}(\zeta_{\tau})$ $(l=0,1,2)$. Clearly, the function
$\zeta$ solves the equation \eqref{pb-fin-ve} and vanishes at $\tau=0$.

{\bf Uniqueness part.} Assume that $\zeta_1$ and $\zeta_2$ are two classical solutions to Equation
\eqref{eq-final-ve-1} which vanish at $\tau=0$. Then, the function $\chi:=\zeta_1-\zeta_2$ turns out to solve the equation
\begin{equation}\label{unique}
\frac{\partial}{\partial\tau}B_{\e}(\chi)  = S(\chi)
+\e F_{\e}((\chi(\zeta_1
+\zeta_2))_{\eta})+2F_{\e}((\Psi{\chi})_{\eta}).
\end{equation}
We multiply \eqref{unique} by $\chi$ and integrate over $[-\ell_0/2,\ell_0/2]$. We get
\begin{align*}
\int_{-\frac{\ell_0}{2}}^{\frac{\ell_0}{2}}
B_{\e}(\chi(\tau,\cdot))\chi(\tau,\cdot) d\eta
=&\int_{-\frac{\ell_0}{2}}^{\frac{\ell_0}{2}}
S(\chi(\tau,\cdot))\chi(\tau,\cdot)d\eta\notag\\
&+\e\int_{-\frac{\ell_0}{2}}^{\frac{\ell_0}{2}}F_{\e}((\chi(\tau,\cdot)(\zeta_1(\tau,\cdot)
+\zeta_2(\tau,\cdot)))_{\eta})
\chi(\tau,\cdot)d\eta\notag\\
&+2\int_{-\frac{\ell_0}{2}}^{\frac{\ell_0}{2}}
F_{\e}((\Psi{\chi}(\tau,\cdot))_{\eta})\chi(\tau,\cdot),
\end{align*}
for any $\tau\in [0,T]$. All the terms but
$\int_{-\ell_0/2}^{\ell_0/2}F_{\e}((\chi(\tau,\cdot)(\zeta_1(\tau,\cdot)
+\zeta_2(\tau,\cdot)))_{\eta})\chi(\tau,\cdot)d\eta$ have been
already estimated in Lemmata \ref{lemma-1} and \ref{lemma-2}. Hence,
we just need to estimate this latter term. For this purpose, we
observe that \eqref{estim-6} implies that
\begin{align}
&\left
|\int_{-\frac{\ell_0}{2}}^{\frac{\ell_0}{2}}F_{\e}((\chi(\zeta_1+\zeta_2))_{\eta}(\tau,\cdot))\chi(\tau,\cdot)d\eta\right
|
\label{uniqueness-variat-1}\\
\le &2\sqrt{2}\e^{\frac{3}{2}}|D^2(\chi(\zeta_1+\zeta_2))(\tau,\cdot)|_2|D^2\chi(\tau,\cdot)|_2
+3\e |D^2(\chi(\zeta_1+\zeta_2))(\tau,\cdot)|_2|D\chi(\tau,\cdot)|_2\notag\\
&+4\sqrt{\e}|\chi(\tau,\cdot)(\zeta_1+\zeta_2)(\tau,\cdot)|_2|D^2\chi(\tau,\cdot)|_2\notag\\
&+\frac{3+\sqrt{2}}{4}|\chi(\tau,\cdot)(\zeta_1+\zeta_2)(\tau,\cdot)|_2|D\chi(\tau,\cdot)|_2,\notag
\end{align}
for any $\tau\in [0,T]$.
By the a priori estimates \eqref{main-estimate} with $n=1$, we infer that
\begin{eqnarray*}
|D(\zeta_1+\zeta_2)(\tau,\cdot)|_2^2+\e|D^2(\zeta_1+\zeta_2)(\tau,\cdot)|_2^2\le 2K_1,\qquad\;\,\tau\in [0,T].
\end{eqnarray*}
Therefore,
\begin{align*}
&|\chi(\tau,\cdot)(\zeta_1+\zeta_2)(\tau,\cdot)|_2\le C_1|\chi(\tau,\cdot)|_2|D(\zeta_1+\zeta_2)(\tau,\cdot)|_2
\le 2C_1K|\chi(\tau,\cdot)|_2,\\[2mm]
&|D^2(\chi(\zeta_1+\zeta_2))(\tau,\cdot)|_2\le C_1|D^2(\zeta_1+\zeta_2)(\tau,\cdot)|_2|D\chi(\tau,\cdot)|_2\\
&\qquad\qquad\qquad\qquad\qquad\;+C_1|D(\zeta_1+\zeta_2)(\tau,\cdot)|_2|D^2\chi(\tau,\cdot)|_2\\
&\qquad\qquad\qquad\qquad\quad\;\,\le 2C_1K\e^{-1}|D\chi(\tau,\cdot)|_2
+2C_1K|D^2\zeta(\tau,\cdot)|_2,
\end{align*}
for any $\tau\in [0,T]$ and some positive constant $C_1$, depending on $\ell_0$ only.
We can thus continue \eqref{uniqueness-variat-1} getting
\begin{align*}
&\left |\int_{-\frac{\ell_0}{2}}^{\frac{\ell_0}{2}}F_{\e}((\chi(\zeta_1+\zeta_2))_{\eta}(\tau,\cdot))\chi(\tau,\cdot)d\eta\right |\notag\\
\le & C_K\sqrt{\e}|D\chi(\tau,\cdot)|_2|D^2\chi(\tau,\cdot)|_2
+C_K\e^{\frac{3}{2}}|D^2\chi(\tau,\cdot)|_2^2
+C_K\sqrt{\e}|\chi(\tau,\cdot)|_2|D\chi(\tau,\cdot)|_2\\
&+C_K|D\chi(\tau,\cdot)|_2^2
+C_K|\chi(\tau,\cdot)|_2|D\chi(\tau,\cdot)|_2\\
\le & C_K\e |D^2\chi(\tau,\cdot)|_2^2+
C_K|D\chi(\tau,\cdot)|_2^2+
C_K |D^2\chi(\tau,\cdot)|_2^2,
\end{align*}
for any $\tau\in [0,T]$ and some positive constant $C_K$, depending on $K$ only.
Hence, combining this estimate with \eqref{estim-1} and \eqref{estim-bad-3} yields
\begin{align*}
&\frac{1}{2}\frac{d}{d\tau}|\chi(\tau,\cdot)|_2^2 +2\e
\frac{d}{d\tau}|D\chi(\tau,\cdot)|_2^2
+\frac{1+\e}{2}\frac{d}{d\tau}|\sqrt{R_{\e}}\chi(\tau,\cdot)|_2^2
+M_{\e,K}|D^2\chi(\tau,\cdot)|_2^2\\
\le &\left (K_2(0,\Psi)+C_K\e+1\right )|\chi(\tau,\cdot)|_2^2
+\e \left (C_K+K_2(0,\Psi)\right )|D\chi(\tau,\cdot)|_2^2,
\end{align*}
for any $\tau\in [0,T]$,
where $M_{\e,K}=\frac{15}{4}-C_K\e^2-K_2(0,\psi)\e^{\frac{3}{2}}$.
Up to replacing $\e_1(T)$ with a smaller value $\e_0(T)$, if needed, we can assume that
$M_{\e,K}\le 0$ for any $\e\in (0,\e_0(T)]$.
Now, Gronwall lemma applies and yields $\zeta\equiv 0$ since $\zeta(0,\cdot)=0$.

\subsection{Proof of Theorem $\ref{thm:2}$}

We now return to $\rho$ and to Problem  \eqref{pb-fin-ve}.
This can be done as in the proof of Theorem \ref{thm:1}. The idea is simple: we look for
$\rho$ as $\rho(\tau,\eta) = \chi(\tau,\eta) + p(\tau)w_0$, where $\chi$ has zero average. More precisely,
we set $\chi={\mathscr P}(\zeta)$, where operator ${\mathscr P}$ is defined by \eqref{operat-P}.
A simple computation shows that
\begin{eqnarray*}
B_{\e}(\chi)+H_{\e}(\Phi_{\eta})-S(\chi)-M_{\e}((\Phi_{\eta})^2)
-\e F_{\e}((\chi_{\eta})^2)-2F_{\e}(\Phi_{\eta}\chi_{\eta})
\end{eqnarray*}
is independent of $\eta$. Since $\chi\in (I-\Pi)(L^2)$, this means that
\begin{align*}
B_{\e}(\chi)+H_{\e}(\Phi_{\eta})=&S(\chi)+(I-\Pi)(M_{\e}((\Phi_{\eta})^2))
+\e (I-\Pi)(F_{\e}((\chi_{\eta})^2))\\
&+2(I-\Pi)(F_{\e}(\Phi_{\eta}\chi_{\eta})).
\end{align*}
Let us now denote by $p:[0,T]\to\mathbb R$ the solution to the Cauchy problem
\begin{equation}
\left\{
\begin{array}{ll}
\displaystyle\frac{dp}{d\tau}=-\Pi(H_{\e}(\Phi_{\eta}))+\Pi(M_{\e}((\Phi_{\eta})^2))
+\e\Pi(F_{\e}((\chi_{\eta})^2))+2\Pi(F_{\e}(\Phi_{\eta}\chi_{\eta})),\\[3mm]
p(0)=0.
\end{array}
\right.
\label{pb-Pi-part}
\end{equation}
If we now set $\rho=p+\chi$, we immediately see that
$\rho(0,\cdot)=0$ and $\rho$ solves equation \eqref{pb-fin-ve}.

Clearly, this function is the unique solution to the Equation
\eqref{pb-fin-ve} which vanishes at $\tau=0$. Indeed, if $\rho_1$
and $\rho_2$ are two of such solutions, then the functions
$\zeta_1:=D_{\eta}\rho_1$ and $\zeta_2:=D_{\eta}\rho_2$ solve
Equation \eqref{eq-final-ve-1} and vanish at $\tau=0$. By the above
results, $\zeta_1$ and $\zeta_2$ do agree. This means that
$(I-\Pi)(\rho_1)\equiv (I-\Pi)(\rho_2)$. But then also $\Pi(\rho_1)$
and $\Pi(\rho_2)$ agree, since, as Problem \eqref{pb-Pi-part} shows,
$\Pi(\rho_1)$ and $\Pi(\rho_2)$ are uniquely determined by
$(I-\Pi)(\rho_1)$.

To complete the proof of Theorem \ref{thm:2}, let
us check that  there exists $M>0$ such that
\begin{equation}
\sup_{{\tau\in [0,T]}\atop{\eta\in
[-\ell_0/2,\ell_0/2]}} |\rho(\tau,\eta)| \leq M,
\label{final-apriori}
\end{equation}
uniformly in $0<\e\leq \e_0(T)$.
Applying the a priori estimates in Theorem \ref{cor-apriori-estim}
(here $n=0$ is enough)
and using \eqref{operat-P}, one can easily show that
\begin{eqnarray*}
\|(I-\Pi)(\rho)\|_{\infty}=\|{\mathscr P}(\zeta)\|_{\infty} \le
(1+\ell_0)\sqrt{\ell_0K_0}.
\end{eqnarray*}

As far as the component of $\rho$ along $\Pi(L^2)$ is concerned (which we still denote by $p$),
we observe that (see \eqref{fk}, \eqref{hk} and \eqref{mk})
\begin{align*}
&\Pi(H_{\e}(\Phi_{\eta}))=\Pi(M_{\e}((\Phi_{\eta})^2))=0,\\
&\Pi(F_{\e}((\chi_{\eta})^2))=-\frac{1}{2}\Pi((\chi_{\eta})^2),\\
&\Pi(F_{\e}(\Phi_{\eta}\chi_{\eta}))=-\frac{1}{2}\Pi(\Phi_{\eta}\chi_{\eta}),
\end{align*}
and we can estimate
\begin{align*}
|\Pi(F_{\e}((\chi_{\eta}(\tau,\cdot))^2))|\le
&\frac{1}{2}|\chi_{\eta}(\tau)|_2
\le\frac{1}{2}\sup_{\tau\in [0,T]}|\zeta(\tau,\cdot)|_2^2\le \frac{1}{2}K_0,\\[2mm]
|\Pi(F_{\e}(\Phi_{\eta}(\tau,\cdot)\chi_{\eta}(\tau,\cdot)))|\le
&\frac{1}{2}\int_{-\frac{\ell_0}{2}}^{\frac{\ell_0}{2}}
|\Phi_{\eta}(\tau,\cdot)\chi_{\eta}(\tau,\cdot)|d\eta\\
\le &\frac{1}{2}|\Phi_{\eta}(\tau,\cdot)|_2|\zeta(\tau,\cdot)|_2\\
\le &\frac{1}{2}\sqrt{K_0}\sup_{\tau\in [0,T]}|\Phi_{\eta}(\tau,\cdot)|_2,
\end{align*}
for any $\tau\in [0,T]$. It thus follows from \eqref{pb-Pi-part} that
\begin{align*}
|p(\tau)|&\le \e\int_0^{\tau}|\Pi(F_{\e}((\chi_{\eta}(\tau,\cdot))^2))
+2\Pi(F_{\e}(\Phi_{\eta}(\tau,\cdot)\chi_{\eta}(\tau,\cdot)))|d\tau\\
&\le \frac{1}{2}K_0T+\sqrt{K_0}T\sup_{\tau\in [0,T]}|\Phi_{\eta}(\tau,\cdot)|_2,
\end{align*}
for any $\tau\in [0,T]$. Estimate \eqref{final-apriori} now follows immediately.

Finally, coming back to Problem \eqref{psd-abstract} and setting
$\ell_{\e}=\ell_0/\sqrt{\e}$ and $T_{\e}=T/\e^{2}$, one can easily conclude that, for any $\e\in (0,\e_0]$, such a problem admits a
unique classical solution $\varphi$. Moreover,
\begin{eqnarray*}
\|\varphi(t,\cdot)-\e\Phi(t\e^2,\sqrt{\e}\,\cdot)\|_{C([-\ell_{\e}/2,\ell_{\e}/2])}\le
\varepsilon^2 M,\qquad\;\,t\in [0,T_{\e}].
\end{eqnarray*}
This accomplishes the proof of Theorem \ref{thm:2}.

\appendix

\section{Proof of Theorem \ref{thm-4.1}}
\label{sect-appendix}
\setcounter{equation}{0}

Showing that Problem \eqref{4order} admits a unique solution
$\Phi\in C^1([0,T_0];L^2)\cap C([0,T_0];H^4_{\sharp})$ for some
$T_0>0$ is an easy task. Indeed, the operator $A:H^2_{\sharp}\to
L^2$ is sectorial in $L^2$ as it has been already remarked. By
\cite[Prop. 2.4.1 \& 2.4.4]{lunardi} the operator $S=-4A^2-A$ is
sectorial in $L^2$ with domain $H^4_{\sharp}$. Classical results for
semilinear equations associated with sectorial operators show that
the Cauchy problem \eqref{4order} admits a unique solution $\Phi$
with the above regularity properties. (See e.g., \cite[Prop.
7.1.10]{lunardi}.) $\Phi$ turns out to be a fixed point of the
operator $\Gamma$, formally defined by
\begin{eqnarray*}
(\Gamma(\Phi))(\tau,\cdot)=e^{\tau S}\Phi_0
+\int_0^{\tau}e^{(\tau-s)S}(\Phi_{\eta}(s,\cdot))^2ds,\qquad\;\,\tau>0,
\end{eqnarray*}
where $\{e^{\tau S}\}$ denotes the semigroup generated by $S$.

Using a classical continuation argument, we can extend $\Phi$ to a maximal
domain $[0,T_{\max})$ with a function (still denoted by $\Phi$) which
belongs to $C^1([0,T_{\max});L^2)\cap C([0,T_{\max});H^4_{\sharp})$.

Let us regularize $\Phi$. Suppose that $\Phi_0\in H^5_{\sharp}$.
Note that $S$ commutes with $D_{\eta}$. Hence,
\begin{eqnarray*}
\Phi_{\eta}(\tau,\cdot)=e^{\tau S}(D_{\eta}\Phi_0)
+\int_0^{\tau}e^{(\tau-s)S}D_{\eta}(\Phi_{\eta}(s,\cdot))^2ds,\qquad\;\,\tau\in
[0,T_{\max}).
\end{eqnarray*}
Since $\Phi\in C^1([0,T_{\max});L^2)\cap C([0,T_{\max});H^4_{\sharp})$ and
$H^k_{\sharp}$ belongs to the class $J_{k/4}$ between $L^2$ and $H^4$, we can estimate
\begin{align*}
\|\Phi(\tau_2,\cdot)-\Phi(\tau_1,\cdot)\|_k\le &
|\Phi(\tau_2,\cdot)-\Phi(\tau_1,\cdot)|_2^{1-\frac{k}{4}}
\|\Phi(\tau_2,\cdot)-\Phi(\tau_1,\cdot)\|_4^{\frac{k}{4}}\\
\le & 2\|\Phi_{\tau}\|_{C([0,T_1];L^2)}
\|\Phi\|_{C([0,T_1];H^4_{\sharp})}^{\frac{k}{4}}|\tau_2-\tau_1|^{1-\frac{k}{4}},
\end{align*}
for any $\tau_1,\tau_2\in [0,T_1]$ and any $T_1<T_{\max}$.
Therefore, by the Sobolev embedding theorem, we can estimate
\begin{align*}
|D_{\eta}(\Phi_{\eta}(\tau_2,\cdot))^2-D_{\eta}(\Phi_{\eta}(\tau_1,\cdot))^2|_2
\le &|\Phi_{\eta}(\tau_2,\cdot)|_{\infty}|\Phi_{\eta\eta}(\tau_2,\cdot)-
\Phi_{\eta\eta}(\tau_1,\cdot)|_2\\
&+|\Phi_{\eta\eta}(\tau_2,\cdot)|_{\infty}|\Phi_{\eta}(\tau_2,\cdot)-
\Phi_{\eta}(\tau_1,\cdot)|_2\\
\le &
C_{T_1}|\tau_2-\tau_1|^{\frac{1}{2}},
\end{align*}
for any $\tau_1$ and $\tau_2$ as above.
This shows that $D_{\eta}(\Phi_{\eta}(\tau_2,\cdot))^2$ belongs to
$C^{1/2}([0,T_1];L^2)$ for any $T_1<T_{\max}$.
Theorem 4.3.1 of \cite{lunardi} implies that $\Phi_{\eta}\in
C^1([0,T_{\max});L^2)\cap C([0,T_{\max});H^4_{\sharp})$.
In particular, $\Phi_{\tau}$ belongs to $C([0,T_{\max});H^1_{\sharp})$.
It follows that $\Phi_{\tau\eta}\equiv\Phi_{\eta\tau}$.
Iterating this argument shows that, if $\Phi_0\in H^m_{\sharp}$ for
some $m\in\mathbb N$ such that $m>4$, then $\Phi\in C([0,T_{\max});H^m_{\sharp})$ and
$\Phi_{\tau}\in C([0,T_{\max});H^{m-4}_{\sharp})$.

The rest of the proof is devoted to show that $T_{\max}=+\infty$.
We adapt the arguments in \cite[Thm. 2.4]{tadmor}.
The main
step is the {\it a priori estimate}
\begin{equation}
|\Phi_{\eta}(\tau,\cdot)|_2\le
e^{\frac{13}{6}\tau}|D_{\eta}\Phi_0|_2,\qquad\;\,\tau\in [0,T_{\max}).
\label{apriori-z}
\end{equation}
For this purpose, we introduce the function $v$, defined by
$v(\tau,\eta)=e^{-2\tau}\Phi_{\eta}(\tau,\eta)$ for any
$(\tau,\eta)\in [0,T_{\max})\times [-\ell_0/2,\ell_0/2]$. The smoothness of
$\Phi$ implies that $v\in C^{1,4}([0,T_{\max})\times
[-\ell_0/2,\ell_0/2])$, solves the parabolic equation
\begin{equation}
v_\tau= -3v_{\eta\eta\eta\eta}-v_{\eta\eta} - e^{2\tau}vv_\eta-2v,
\label{parab-eq}
\end{equation}
and satisfies the boundary conditions
$D_{\eta}^{(k)}v(\tau,-\ell_0/2)=D_{\eta}^{(k)}v(\tau,\ell_0/2)$ for any
$\tau\in [0,T)$ and $k=0,1,2,3$. Multiplying both the sides of
\eqref{parab-eq} by $v(\tau,\cdot)$, integrating on
$(-\ell_0/2,\ell_0/2)$ and observing that the integral over
$(-\ell_0/2,\ell_0/2)$ of $(v(\tau,\cdot))^2v_{\eta}(\tau,\cdot)$
vanishes for any $\tau\in [0,T_{\max})$, we get
\begin{equation}
\frac{d}{d\tau}|v(\tau,\cdot)|_2^2
+3|v_{\eta\eta}(\tau,\cdot)|_2^2-|v_{\eta}(\tau,\cdot)|_2^2+2|v(\tau,\cdot)|_2^2=0,
\qquad\;\,\tau\in [0,T_{\max}).
\label{energy-1}
\end{equation}
In view of the estimate
\begin{eqnarray*}
|v_{\eta}(\tau,\cdot)|_2^2\le
|v(\tau,\cdot)|_2|v_{\eta\eta}(\tau,\cdot)|_2 \le
3|v_{\eta\eta}(\tau,\cdot)|_2^2+\frac{5}{3}|v(\tau,\cdot)|_2^2,\qquad\;\,\tau\in
[0,T_{\max}),
\end{eqnarray*}
Formula \eqref{energy-1} leads us to the inequality
\begin{eqnarray*}
\frac{d}{d\tau}|v(\tau,\cdot)|_2^2+\frac{1}{3}|v(\tau,\cdot)|_2^2\le
0, \qquad\;\,\tau\in [0,T_{\max}),
\end{eqnarray*}
from which Estimate \eqref{apriori-z} follows at once.

We can now complete the proof.
For this purpose, let us consider the function $\Psi$, defined by
$\Psi(\tau,\eta)=\Phi(\tau,\eta)-\Pi(\Phi(\tau,\cdot))$ for any
$\tau\in [0,T_{\max})$ and any $\eta\in [-\ell_0/2,\ell_0/2]$. Applying Poincar\'e
inequality, we get
\begin{equation}
|\Phi(\tau,\cdot)-\Pi(\Phi(\tau,\cdot))|_2\le\sqrt{\ell_0}e^{\frac{13}{6}\tau}|D_{\eta}\Phi_0|_2,
\qquad\;\,\tau\in [0,T_{\max}). \label{I-P-sup}
\end{equation}

Let us now show that the function $\tau\mapsto\Pi(\Phi(\tau,\cdot))$
satisfies a similar estimate. For this purpose, we fix $\tau\in [0,T_{\max})$ and apply the operator $\Pi$ to both the
sides of \eqref{4order}. Since $\Phi$ and its derivatives satisfy
periodic boundary conditions,
\begin{eqnarray*}
\frac{d}{d\tau}\Pi(\Phi(\tau,\cdot))= \Pi(\Phi_{\tau}(\tau,\cdot))=
-\frac{1}{2\ell_0}\Pi((\Phi_{\eta}(\tau,\cdot))^2),
\end{eqnarray*}
for any $\tau\in [0,T_{\max})$. Taking \eqref{apriori-z} into account, we
can then estimate
\begin{eqnarray*}
\left |\frac{d}{d\tau}\Pi(\Phi(\tau,\cdot))\right |
\le\frac{1}{2\ell_0}e^{\frac{13}{3}\tau}|D_{\eta}\Phi_0|_2^2,\qquad\;\,\tau\in
[0,T_{\max}).
\end{eqnarray*}
Hence,
\begin{equation}
|\Pi(\Phi(\tau))|\le|\Pi(\Phi_0)|+\int_0^{\tau}\left
|\frac{d}{d\tau}\Pi(\Phi(\tau,\cdot))\right |d\tau\le |\Pi(\Phi_0)|
+\frac{3}{26\ell_0}|D_{\eta}\Phi_0|_2^2e^{\frac{13}{3}\tau},
\label{Pi-sup}
\end{equation}
for any $\tau\in [0,T_{\max})$. Estimates \eqref{I-P-sup} and
\eqref{Pi-sup} show that $\Phi$ is bounded in $[0,T_{\max})$ with values in $L^2$. Therefore, we can apply \cite[Prop.
7.2.2]{lunardi} with $\gamma=1/2$, $\alpha=1/4$, $X_{1/4}=H^1_{\sharp}$, which implies
that $T_{\max}=+\infty$.

%For acknowledgements section, please don't number the section, please begin it with \section*{Acknowledgements}
\section*{Acknowledgments}
C.-M. B thanks the VU University Amsterdam and the Department of
Mathematics of Parma for their kind hospitality during his visits.
L. L. was a visiting professor at the University of Bordeaux 1 in
2008-2009. He greatly acknowledges the Institute of Mathematics of
Bordeaux for the kind hospitality during his visits. The work of G.
I. S. was supported in part by the US-Israel Binational Science
Foundation (Grant 2006-151), and the Israel Science Foundation
(Grant 32/09).
% You may incorporate your references as follows in your main tex file.
% Using BibTex is not recommended but can be handled.

\medskip
% The data information below will be filled by AIMS editorial staff
Received xxxx 20xx; revised xxxx 20xx.
\medskip

\end{document}